\newtheorem{te}{Theorem}
\newtheorem{os}{Remark}
\newtheorem{lem}{Lemma}
\newtheorem{coro}{Corollary}
\numberwithin{equation}{section}
\begin{document}

\title{Fractional Poisson process with random drift}
\author{Luisa Beghin\thanks{
Dipartimento di Scienze Statistiche, Sapienza University of Rome - P.le A.
Moro 5 - 00185, Rome, Italy. Email: luisa.beghin@uniroma1.it} \and \& \and %
Mirko D'Ovidio\thanks{%
Dipartimento di Scienze di Base e Applicate per l'Ingegneria, Sapienza
University of Rome - A. Scarpa 10 - 00161, Rome, Italy. Email:
mirko.dovidio@uniroma1.it}}
\maketitle

\begin{abstract}
We study the connection between PDEs and L\'{e}vy processes running with clocks given by time-changed Poisson processes with stochastic drifts. The random times we deal with are therefore given by time-changed Poissonian jumps related to some Frobenious-Perron operators $K$ associated to random translations. Moreover, we also consider their hitting times as a random clock. Thus, we study processes driven by equations involving time-fractional operators (modelling memory) and fractional powers of the difference operator $I-K$ (modelling jumps). For this large class of processes we also provide, in some cases, the explicit representation of the transition probability laws. To this aim, we show that a special role is played by the translation operator associated to the representation of the Poisson semigroup.
\end{abstract}

\textbf{Keywords :} Poisson process, time-change, random drift, fractional equation, Poisson semigroup.

\textbf{Subjclass :} 60G35, 60G50

\section{Introduction and preliminary results}

The aim of this paper is to study a real-valued version of the
Poisson process, defined as follows%
\begin{equation}
N(t)+a\mathfrak{F}_{t}^{\alpha ,\beta },\quad t>0,\;a\geq 0,\;\alpha ,\beta
\in (0,1],  \label{dr}
\end{equation}%
where $N$ denotes the standard Poisson process. The drift is defined
through the following random composition%
\begin{equation}
\mathfrak{F}_{t}^{\alpha ,\beta }=\mathfrak{A}_{\mathfrak{L}_{t}^{\beta
}}^{\alpha },\quad t>0 \label{time-change-Sec2}
\end{equation}
independent from $N$, where $\mathfrak{A}_{t}^{\alpha}$, $t>0$ is an $\alpha$-stable subordinator and $\mathfrak{L}_{t}^{\beta }=\inf\{s \geq 0\,: \,\mathfrak{A}_{s}^{\beta}>t \}$, $t>0$ is the inverse to a stable subordinator of order $\beta \in (0,1)$, all independent from each other.

The process \eqref{dr} can resemble the compensated Poisson process, defined as $N(t)-\lambda t$ (where $\lambda$ is the parameter of $N(t)$) see e.g. \cite{appB}. We want to remark here that the two processes are completely different since \eqref{dr} is, for any $\alpha, \beta \in (0,1]$, a non decreasing process.

We further generalize (\ref{dr}) by considering a "fractional version" of
it, obtained by a random time-change of $N$, i.e. as%
\begin{equation}
N(\mathfrak{F}_{t}^{\gamma ,\beta })+a\mathfrak{F}_{t}^{\alpha ,\beta
},\quad t>0,\;a\geq 0,\;\alpha ,\gamma ,\beta \in (0,1].  \label{dr3}
\end{equation}%
As particular cases of (\ref{dr3}), when the drift coefficient is equal to
zero, we can derive two fractional Poisson processes already studied in the
literature. For $a=0$ and $\gamma =1$, it reduces to the time-fractional
Poisson process $N_{\beta }(t),$ $t>0$\ which is studied in \cite{Rep}, \cite%
{Mai}, \cite{BegOrs}: indeed it is proved in \cite{Mee} that $N_{\beta }$
coincides with $N(\mathfrak{L}_{t}^{\beta })$. On the other hand, for $a=0$ and $\beta =1$, the process (\ref{dr3}) reduces to the space-fractional Poisson process studied in \cite{ors-pol-SPL},
which can be defined as $N(\mathfrak{A}_{t}^{\gamma })$. Subordinated Poisson semigroups have been also investigated in \cite{forst81} where measures of the form
\begin{equation*}
\int_0^\infty P_t \mu(dt) = \sum_{k=0}^\infty a_k \delta_k
\end{equation*}
($\delta_k$ is the Dirac measure at $k$) with
\begin{equation*}
P_t = e^{-t} \sum_{k=0}^\infty \frac{t^k}{k!} \delta_k
\end{equation*}
have been characterized in terms of the properties of the sequence $\{a_k\}_{k \in \mathbb{N}_0}$.

Throughout the paper, we are interested in studying the fractional
differential equation satisfied by the density of the processes defined
above. These equations will be expressed in terms of the translation operator defined as%
\begin{equation}
e^{\vartheta \partial _{x}}f(x)=f(x+\vartheta )  \label{translatOp}
\end{equation}%
for $x,\vartheta \in \mathbb{R}$ and an analytic function $f:\mathbb{R}%
\mapsto \mathbb{R}$. The rule \eqref{translatOp} can be formally obtained by
considering the Taylor expansion of $f$ near $x$ written as
\begin{equation*}
f(\vartheta )=\sum_{k=0}^{\infty }\frac{(\vartheta -x)^{k}}{k!}f^{(k)}(x)
\end{equation*}%
and therefore
\begin{equation*}
f(x+\vartheta )=\sum_{k=0}^{\infty }\frac{\vartheta ^{k}}{k!}\partial
_{x}^{k}f(x)=\sum_{k=0}^{\infty }\frac{(\vartheta \partial _{x})^{k}}{k!}%
f(x)=e^{\vartheta \partial _{x}}f(x).
\end{equation*}%
The Taylor series can be considered also for the class of bounded continuous
functions on $(0,+\infty )$ (see for example \cite{Fel71, HillePhil57}), so
that we extend the rule \eqref{translatOp} to such class of functions.

Let $N(t)$, $t>0$ be a Poisson process with rate $\lambda >0$; we write its
distribution as follows%
\begin{equation*}
p_{k}(t)=\frac{(-\lambda \partial _{\lambda })^{k}}{k!}e^{-\lambda t},
\end{equation*}%
which solves the differential equation
\begin{align}
\frac{\partial p_{k}}{\partial t}=& -\lambda \left( I-B\right) p_{k}(t)
\label{eqpois} \\
=& -\lambda \big(p_{k}(t)-p_{k-1}(t)\big),\quad k\in \mathbb{N}_{0},\;t>0
\notag
\end{align}%
with
\begin{equation*}
p_{k}(0)=\left\{
\begin{array}{ll}
1, & k=0 \\
0, & k\leq 1%
\end{array}%
\right. .
\end{equation*}%

We denote by $B$ the (discrete) backward difference operator, i.e. $Bu(x)=u(x-1)$ for all integers $x$. By means of (\ref{translatOp}) we can
also rewrite the well-known probability generating function of $N(t)$, $t>0,$
as follows
\begin{equation*}
\mathbb{E}\,u^{N(t)}=\sum_{k=0}^{\infty }\frac{(-\lambda u\partial _{\lambda
})^{k}}{k!}e^{-\lambda t}=e^{-\lambda u\partial _{\lambda }}e^{-\lambda
t}=e^{-\lambda t(1-u)}.
\end{equation*}%
Let us denote the waiting time of the $k$-th event for $N$, as%
\begin{equation}
T_{k}=\inf \{t\geq 0\,:\,N(t)>k\}.  \label{wai}
\end{equation}%
We will study the analogues of (\ref{wai}) for the processes (\ref{dr}) and (%
\ref{dr3}) and obtain their governing equations.

Further in the paper we will consider the solution to the
Poisson driven stochastic differential equation, for well-defined functions $b$ and $f$ (see for example \cite{Traple96})
\begin{equation}
dY_{t}=b(Y_t)dt+f(Y_t)dN_{t}  \label{SDEPois}
\end{equation}%
with $\mathbb{E}dN_{t}=\lambda dt$ and
\begin{equation}
dN_{t}=\left\{
\begin{array}{ll}
1, & \text{ Poisson arrival at time } t, \\
0, & \text{ elsewhere}.%
\end{array}%
\right.  \label{dN}
\end{equation}%
Here, the function $f$ plays the role of jump function. The partial differential equation corresponding to \eqref{SDEPois} is a
transport equation of the form
\begin{equation}
\frac{\partial u}{\partial t}=-\frac{\partial }{\partial x}\Big(b(x)u\Big)%
-\lambda (I-K)u  \label{transpNeq}
\end{equation}%
where $I$ is the identity operator and $K$ is the Frobenius-Perron operator
associated with the transformation $x\mapsto x-f(x)$. If $f\neq 1$, then we
have a generalized jump which equals $f$ at each Poisson arrival as equation %
\eqref{SDEPois} entails.

In Section \ref{Sec2} we consider a L\'{e}vy process time-changed with a
Poisson process with deterministic drift, i.e.%
\begin{equation}
X(N(t)+at),  \label{lev}
\end{equation}%
where $X(t),$ $t>0$ is a L\'{e}vy process independent from $N.$ Indeed the
composition (\ref{lev}) is meaningful since the process representing the time is positive and real-valued. Section \ref{Sec3} is devoted to the analysis of the Poisson process time-changed by the process \eqref{time-change-Sec2} : we find connections with fractional and higher-order equations and derive explicit representations for the density of the hitting time. In Section \ref{SecPoisDrift} we study the time-changed Poisson process with random drift \eqref{dr3} and its hitting time. Finally in Section  \ref{lastSect} we consider more general versions of \eqref{lev} where $X$ is time-changed by the process \eqref{dr3} and also by the (independent) hitting time process of \eqref{dr3} with $\beta=1$. In both cases we derive the governing equations.

\section{Poisson process with drift}
\label{Sec2}
In order to consider the Poisson process with continuous drift we introduce
the shift operator, which we define as%
\begin{equation*}
Ku(x)=\left\{
\begin{array}{ll}
e^{-\partial _{x}}u(x), & \text{ if }x\in \mathbb{R}_{+} \setminus \mathbb{Z}_{+} \\
Bu(x), & \text{ if }x\in \mathbb{Z}_{+}%
\end{array}%
\right.
\end{equation*}%
where $B$ is the backward difference operator and $e^{-\partial _{x}}$  is defined
in \eqref{translatOp}.

\begin{te}
The process
\begin{equation}
N(t)+at,\quad t>0,\;a>0  \label{PoisDrift}
\end{equation}%
has probability law
\begin{equation}
p_{x}(t)=e^{-\lambda t}\sum_{k=0}^{\infty }\frac{(\lambda t)^{k}}{k!}\delta
(x-k-at),\quad x\geq at,\;a>0,\; t>0,  \label{PoisDriftLaw}
\end{equation}
which is the solution to the transport equation
\begin{align}
\left( \frac{\partial }{\partial t}+a\frac{\partial }{\partial x}\right)
p_{x}(t)=& -\lambda \left( I-K\right) p_{x}(t)  \label{eqpoisa} \\
=& -\lambda \big(p_{x}(t)-p_{x-1}(t)\big)  \notag
\end{align}%
with initial and boundary conditions
\begin{equation*}
\left\{
\begin{array}{l}
p_{x}(0)=\delta (x) \\
p_{0}(t)=\delta (at)e^{-\lambda t}%
\end{array}%
\right.
\end{equation*}%
where $\delta $ is the Dirac delta function.
\label{theo-green-pois}
\end{te}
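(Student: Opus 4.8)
The plan is to verify three things in turn: (i) that the stated density $p_x(t)$ is indeed the law of $N(t)+at$; (ii) that it satisfies the transport equation \eqref{eqpoisa}; and (iii) that it meets the initial and boundary conditions. For (i), I would argue directly from the definition: since $N(t)$ takes the value $k$ with probability $e^{-\lambda t}(\lambda t)^k/k!$, the random variable $N(t)+at$ takes the value $k+at$ with that same probability, so its law is the weighted sum of Dirac masses $\sum_{k\ge 0} e^{-\lambda t}\frac{(\lambda t)^k}{k!}\,\delta(x-k-at)$, supported on $\{x \ge at\}$ because $k \ge 0$. This step is essentially immediate and just fixes notation.

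The substantive step is (ii). I would plug \eqref{PoisDriftLaw} into the left-hand side of \eqref{eqpoisa} and compute the two derivatives acting on each summand $c_k(t)\,\delta(x-k-at)$ with $c_k(t)=e^{-\lambda t}(\lambda t)^k/k!$. The $x$-derivative produces $c_k(t)\,\delta'(x-k-at)$, while the $t$-derivative produces $c_k'(t)\,\delta(x-k-at) + c_k(t)\cdot(-a)\,\delta'(x-k-at)$ by the chain rule applied to the argument $x-k-at$; hence in the combination $\partial_t + a\partial_x$ the $\delta'$ terms cancel, leaving $\sum_k c_k'(t)\,\delta(x-k-at)$. Now $c_k'(t) = -\lambda c_k(t) + \lambda c_{k-1}(t)$ (the classical Poisson recursion, with $c_{-1}\equiv 0$), which is exactly $-\lambda\big(c_k(t) - c_{k-1}(t)\big)$. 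Matching this against $-\lambda(p_x(t)-p_{x-1}(t))$ requires observing that $p_{x-1}(t) = \sum_k c_k(t)\,\delta(x-1-k-at) = \sum_k c_{k-1}(t)\,\delta(x-k-at)$ after reindexing $k \mapsto k-1$, so the right-hand side of \eqref{eqpoisa} equals $\sum_k -\lambda\big(c_k(t)-c_{k-1}(t)\big)\delta(x-k-at)$, which coincides with the left-hand side. I would also note that the shift $p_x(t) \mapsto p_{x-1}(t)$ is precisely the action of the operator $K$: on the support $x = k+at$ (generically not an integer), $K$ acts as $e^{-\partial_x}$, and $e^{-\partial_x}\delta(x-k-at) = \delta(x-1-k-at)$, consistent with the definition of $K$ given just before the theorem.

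For (iii), setting $t=0$ collapses the sum to the $k=0$ term (since $(\lambda t)^k/k! \to 0$ for $k\ge 1$ and $c_0(0)=1$), giving $p_x(0) = \delta(x)$. Setting $x=0$ forces $k+at = 0$; since $k\ge 0$ and $a,t>0$ this is only consistent through the distributional identity, and reading off the coefficient of the surviving mass gives $p_0(t) = \delta(at)\,e^{-\lambda t}$, matching the stated boundary condition. The main obstacle, such as it is, is handling the Dirac deltas rigorously — in particular justifying that $\partial_t \delta(x-k-at) = -a\,\delta'(x-k-at)$ and that the $\delta'$ contributions genuinely cancel in the operator $\partial_t + a\partial_x$; this is best phrased by testing both sides against a smooth compactly supported function $\varphi(x)$, integrating by parts, and checking the resulting scalar identity, which reduces everything to the elementary Poisson recursion for $c_k(t)$. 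The identification of the shift with $K$ is cosmetic once the computation is done.
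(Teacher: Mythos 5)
Your proposal is correct, but it takes a genuinely different route from the paper. The paper works entirely on the Laplace-transform side: it first matches the Laplace transform $\widetilde{p_\xi}(t)=\int_0^\infty e^{-\xi x}p_x(t)\,dx$ of the claimed density against $\mathbb{E}e^{-\xi(N(t)+at)}=\exp(-\xi a t-\lambda t(1-e^{-\xi}))$ to identify the law, and then transforms the transport equation into the first-order ODE $\partial_t\widetilde{p_\xi}=(-a\xi-\lambda(1-e^{-\xi}))\widetilde{p_\xi}$ with $\widetilde{p_\xi}(0)=1$, whose unique solution is recognized as the same exponential. You instead verify everything directly in physical space: you read off the law of $N(t)+at$ as a sum of Dirac masses from the definition, and then check the PDE distributionally, exploiting the cancellation of the $\delta'$ terms under $\partial_t+a\partial_x$ and reducing the remainder to the classical Poisson recursion $c_k'=-\lambda(c_k-c_{k-1})$, together with the reindexing identity $p_{x-1}(t)=\sum_k c_{k-1}(t)\delta(x-k-at)$. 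Both computations are sound; your route is more elementary and makes the role of the drift (the exact cancellation of $\delta'$ contributions along the characteristics $x=k+at$) completely transparent, while the paper's route avoids manipulating derivatives of distributions and, because the transformed problem is a scalar linear ODE with a prescribed initial value, delivers uniqueness of the solution essentially for free — a point your direct verification does not address (you show the stated $p_x(t)$ \emph{is a} solution, not that it is \emph{the} solution). If you want your argument to fully justify the definite article in the statement, append a one-line uniqueness remark, e.g.\ by taking Laplace transforms of the difference of two solutions, which is exactly where the paper's method reenters.
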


\begin{proof}
The Laplace transform of \eqref{PoisDrift} is given by
\begin{equation}
\mathbb{E} e^{-\xi N(t) -\xi a t} = e^{- \xi a t} \mathbb{E} e^{-\xi N(t)} = \exp \left( -\xi a t - \lambda t (1-e^{-\xi})\right). \label{laplaw1}
\end{equation}
We can prove that \eqref{PoisDriftLaw} is the law of \eqref{PoisDrift} by checking that
\begin{align*}
\widetilde{p_\xi}(t) = & \int_0^\infty e^{-\xi x} p_x(t)dx \\
= & e^{-\xi a t - \lambda t} \sum_{k=0}^{\infty} \frac{(\lambda t)^k}{k!} e^{-\xi k} \\
= & \exp \left( -\xi a t - \lambda t + \lambda t e^{-\xi}\right)
\end{align*}
coincides with \eqref{laplaw1}. We observe that
\begin{align*}
\int_0^\infty e^{-\xi x} \left(I -K\right) p_x(t)  dx= & \int_0^\infty e^{-\xi x} \big( p_x(t)  - p_{x-1}(t) \big)dx =  (1- e^{-\xi}) \widetilde{p_{\xi}}( t),
\end{align*}
so that the Laplace transform of equation \eqref{eqpoisa} takes the form
\begin{equation*}
\frac{\partial  \widetilde{p_{\xi}}}{\partial t}(t) = \left( - a \xi  - \lambda (1- e^{-\xi})\right) \widetilde{p_{\xi}}(t).
\end{equation*}
We immediately get
\begin{equation*}
 \widetilde{p_{\xi}}( t) = \exp\left( - a \xi t - \lambda t (1- e^{-\xi}) \right)
\end{equation*}
since $ \widetilde{p_{\xi}}(0)=1$.
\end{proof}

\begin{os}
In Theorem \ref{theo-green-pois} we have considered a series representation involving the generalized delta function.  Let us consider an absolutely integrable function $f$ with compact support in the positive real line. We notice that
\begin{equation}
P_t f(x) = \mathbb{E}f(x-N(t)-at) = e^{-\lambda t}\sum_{k=0}^{\infty }\frac{(\lambda t)^{k}}{k!}f
(x-k-at) \label{Pois-semigroup}
\end{equation}
is the transition semigroup associated to the process \eqref{PoisDrift} with initial datum $f \in L^1(\mathbb{R_+})$. Furthermore, we get that
\begin{equation*}
\widetilde{P_t f}(\xi) = \widetilde{f}(\xi)\, \widetilde{p_{\xi}}( t).
\end{equation*}
\label{remark-semigroup-pois}
\end{os}

The homogeneous Poisson process is one of the most well-known L\'{e}vy processes. Let us consider the one-dimensional L\'{e}vy process $X(t)$, $t>0$ with L\'{e}vy symbol
\begin{equation}
\label{Levy-symb-X}
\Psi (\xi ) = ib\xi + \frac{1}{2}\xi^2 - \int_{\mathbb{R}- \{0\}} (e^{i\xi y} -1 -i \xi y \mathbf{1}_{(|y|\leq 1)}) M(dy)
\end{equation}
($M$ is the so called L\'{e}vy measure and $b \in \mathbb{R}$ is a drift coefficient) and infinitesimal generator
\begin{equation}
\label{pseudo-diff-A}
\mathcal{A}f(x)=-\frac{1}{2\pi}\int_{\mathbb{R}}e^{-i\xi x}\Psi (\xi )\widehat{f}(\xi )d\xi
\end{equation}%
for all functions in the domain
\begin{equation}
\label{pseudo-diff-domA}
D(\mathcal{A}) = \left\lbrace f \in L^2(\mathbb{R},dx)\,:\, \int_{\mathbb{R}} \Psi(\xi) |\widehat{f}(\xi)|^2 d \xi< \infty \right\rbrace
\end{equation}
($\widehat{f}$ is the Fourier transform of $f$). Therefore, $-\Psi $
is the Fourier multiplier of $\mathcal{A}$ and $\mathbb{E}\exp i\xi X(t) = \exp -t \Psi(\xi)$.  We recall that $M$ is a Borel measure on $\mathbb{R}^d - \{0\}$ such that
\begin{equation*}
\int (y^2 \wedge 1)M(dy) < \infty \quad \textrm{ or equivalently } \quad \int \frac{y^2}{1+ y^2}M(dy) < \infty
\end{equation*}
where $a \wedge b = \min\{a,b\}$. If $\mathfrak{D}_t$, $t>0$ is a non-decreasing L\'{e}vy process, that is a subordinator (not necessarily stable), then its L\'{e}vy symbol is written as
\begin{equation}
\psi(\xi) = ib\xi + \int_0^\infty \left( e^{i\xi y} -1 \right)M(dy) \label{symb-sub-eta}
\end{equation}
where $b\geq 0$ and the L\'{e}vy measure $M$ satisfies the following requirements: $M(-\infty, 0)=0$ and
\begin{equation}
\int (y \wedge 1)M(dy) < \infty \quad \textrm{ or equivalently } \quad \int \frac{y}{1+ y}M(dy) < \infty. \label{levy-meas-cond2}
\end{equation}
Thus, we get that $\mathbb{E}\exp \big(-\xi \mathfrak{D}_t \big)= \exp \big( t \psi(i\xi)\big)$ and $-\psi(i\xi)$ is the Laplace exponent of $\mathfrak{D}_t$, $t>0$.

Let $P_t=e^{t\mathcal{A}}$ be the semigroup of $X(t)$. Then, $P_t$ is a Feller semigroup (invariant in $C_\infty$ and a strongly continuous contraction semigroup on the Banach space $(C_\infty, |\cdot |_\infty)$ of the infinitely differentiable functions under the sup-norm).  In particular, we are able to compute the semigroup and its generator as pseudo-differential operators (as in formulas \eqref{pseudo-diff-A} and \eqref{pseudo-diff-domA}) and we say that $\widehat{P_t} = e^{-t\Psi}$ is the symbol of $P_t$.

We now focus on the time-changed process
\begin{equation}
X(N(t)+at),\quad t\geq 0,\;a\geq 0  \label{sub-proc-X}
\end{equation}
involving a continuous time-change with Poissonian jumps and such that $X(0)=0$.
\begin{lem}
\label{lemmaXN}
The infinitesimal generator of \eqref{sub-proc-X} is
\begin{equation}
\mathcal{L}f(x)=a\mathcal{A}f(x)-\lambda \int_{\mathbb{R}}\left(
f(x+y)-f(x)\right) F_{X}(dy) \label{inf-gen-XN}
\end{equation}%
where $F_{X}(dy)=f_X(y)dy$ and $f_X$ is the density law of $X(1)$.
\end{lem}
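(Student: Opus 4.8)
The plan is to compute the generator directly from its definition as the $t\downarrow 0$ derivative of the transition operator, exploiting the independence of $X$ and $N$ together with the conditioning on the value of the time-change. Write $Z(t)=X(N(t)+at)$ and let $Q_t f(x)=\mathbb{E}[f(x+Z(t))]$ denote its transition semigroup; then by conditioning on $N(t)$ and using that $X$ has independent, stationary increments with semigroup $P_s=e^{s\mathcal{A}}$,
\begin{equation*}
Q_t f(x)=\mathbb{E}\,P_{N(t)+at}f(x)=e^{-\lambda t}\sum_{k=0}^{\infty}\frac{(\lambda t)^k}{k!}\,P_{k+at}f(x).
\end{equation*}
The generator is $\mathcal{L}f=\lim_{t\downarrow 0} t^{-1}(Q_t f - f)$ on a suitable core (say Schwartz functions, where everything below is justified). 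I would isolate the $k=0$ term, which contributes $e^{-\lambda t}P_{at}f(x)$, and the $k\geq 1$ terms, whose total mass is $O(t)$.

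First I would handle the $k=0$ term: $e^{-\lambda t}P_{at}f(x) = (1-\lambda t + o(t))(f(x)+at\,\mathcal{A}f(x)+o(t)) = f(x) + t(a\mathcal{A}f(x)-\lambda f(x)) + o(t)$, using strong differentiability of the Feller semigroup $P_s$ at $s=0$ with generator $\mathcal{A}$. Next, for the $k\geq 1$ terms, only $k=1$ survives the limit since $\sum_{k\geq 2}(\lambda t)^k/k! = o(t)$ and the operators $P_{k+at}$ are contractions; the $k=1$ term gives $e^{-\lambda t}(\lambda t)\,P_{1+at}f(x) = \lambda t\, P_1 f(x) + o(t)$, again by continuity of $s\mapsto P_s f$ at $s=1$ and $e^{-\lambda t}=1+o(1)$. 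Collecting, $\mathcal{L}f(x) = a\mathcal{A}f(x) - \lambda f(x) + \lambda P_1 f(x) = a\mathcal{A}f(x) - \lambda(f(x) - P_1 f(x))$.

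Finally I would rewrite $P_1 f(x) = \mathbb{E}f(x+X(1)) = \int_{\mathbb{R}} f(x+y)\,F_X(dy)$ with $F_X(dy)=f_X(y)\,dy$ the law of $X(1)$, so that $\lambda(f(x)-P_1f(x)) = \lambda\int_{\mathbb{R}}(f(x)-f(x+y))F_X(dy) = -\lambda\int_{\mathbb{R}}(f(x+y)-f(x))F_X(dy)$, yielding exactly \eqref{inf-gen-XN}. I expect the only delicate point to be the justification that the remainder terms are genuinely $o(t)$ uniformly enough to pass to the limit — this rests on $\|P_s\|\le 1$ for all $s\ge 0$ (Feller/contraction property, already recalled in the excerpt) and on the elementary bound $e^{-\lambda t}\sum_{k\ge 2}(\lambda t)^k/k! \le (\lambda t)^2$, so it is routine; one should also note that $P_1 f\in C_\infty$ so the expression for $\mathcal{L}f$ lies in the right space, and that convergence holds in the sup-norm on the chosen core, which identifies $\mathcal{L}$ as the generator in the Hille–Yosida sense.
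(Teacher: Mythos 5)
Your argument is sound, and it takes a genuinely different route from the paper. The paper works entirely on the Fourier side: it computes $\mathbb{E}e^{i\xi X(N(t)+at)}=e^{-t\Phi(\xi)}$ with $\Phi(\xi)=a\Psi(\xi)+\lambda\bigl(1-e^{-\Psi(\xi)}\bigr)$ and then identifies the generator as the pseudo-differential operator with multiplier $-\Phi$, leaving the matching of $-\Phi$ with the multiplier of the right-hand side of \eqref{inf-gen-XN} as an ``immediate'' check. You instead differentiate the transition semigroup $Q_tf=e^{-\lambda t}\sum_{k\ge 0}\frac{(\lambda t)^k}{k!}P_{k+at}f$ directly at $t=0$ on a core, in sup-norm. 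This is more elementary and more careful: it makes the domain explicit, identifies $\mathcal{L}$ in the Hille--Yosida sense rather than only as a Fourier multiplier, and the remainder estimates you invoke (contractivity of $P_s$ and $e^{-\lambda t}\sum_{k\ge 2}(\lambda t)^k/k!=O(t^2)$) are exactly what is needed. Both approaches are legitimate.

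One point must be fixed: the final line. Carried through consistently, your computation gives
\begin{equation*}
\mathcal{L}f=a\mathcal{A}f-\lambda\bigl(f-P_1f\bigr)=a\mathcal{A}f+\lambda\int_{\mathbb{R}}\bigl(f(x+y)-f(x)\bigr)F_X(dy),
\end{equation*}
i.e.\ the jump term enters with a \emph{plus} sign, whereas \eqref{inf-gen-XN} as printed carries a minus; so ``yielding exactly \eqref{inf-gen-XN}'' is a slip. The plus sign is the correct one: it is the generator of the compound Poisson part, and indeed $-\Phi(\xi)=-a\Psi(\xi)+\lambda\bigl(e^{-\Psi(\xi)}-1\bigr)$ is the multiplier of $a\mathcal{A}+\lambda(P_1-I)$, consistent with the identity $\int_{\mathbb{R}}(f(x+y)-f(x))F_X(dy)=(P_1-P_0)f$ displayed immediately after the lemma. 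The discrepancy therefore lies in the sign of the printed statement, not in your method; but as written your proof claims agreement with a formula it does not actually produce. Either flag the sign in \eqref{inf-gen-XN} as a typo or state the formula you in fact derive.
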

\begin{proof}
We get that
\begin{align}
\mathbb{E}e^{i\xi \left( X(N(t)+at)\right) }=& \mathbb{E}e^{-\left(
N(t)+at\right) \Psi (\xi )}  =  e^{-at\Psi (\xi )}\mathbb{E}e^{-\Psi (\xi )N(t)}  \notag \\
=& \exp \left( -at\Psi (\xi )-\lambda t\left( 1-e^{-\Psi (\xi )}\right)
\right)  \notag \\
=& \exp \left( -t\Phi (\xi )\right)  \label{Phi-symbol}
\end{align}%
and therefore,
\begin{equation*}
\mathcal{L}f(x)=- \frac{1}{2\pi}\int_{\mathbb{R}}e^{-i\xi x}\Phi (\xi )\widehat{f}(\xi )d\xi
\end{equation*}%
is the infinitesimal generator of \eqref{sub-proc-X}. Indeed, we immediately see that $-\Phi$ is the Fourier multiplier of \eqref{inf-gen-XN}.
\end{proof}
We notice that $F_X$ in \eqref{inf-gen-XN} is a non singular measure. Thus,
\begin{align*}
\int_{\mathbb{R}}\left( f(x+y)-f(x)\right) F_{X}(dy) = & \int_{\mathbb{R}}f(x+y) F_{X}(dy) - f(x)\\
= & e^{\mathcal{A}}f(x) - f(x) \\
= & (P_1 - P_0) f(x).
\end{align*}
In  Section \ref{lastSect} we will extend this result to the case where the time-change in \eqref{sub-proc-X} is represented by the process \eqref{dr3}.

\section{Time-changed Poisson process}
\label{Sec3}

We begin our analysis by studying the following composition
\begin{equation}
\mathfrak{F}_{t}^{\alpha ,\beta }=\mathfrak{A}_{\mathfrak{L}_{t}^{\beta
}}^{\alpha },\quad t>0  \label{Fproc}
\end{equation}%
where $\mathfrak{L}_{t}^{\beta }$, $t>0$ is the inverse of the stable
subordinator $\mathfrak{A}_{t}^{\beta }$, $t>0$. The stable process $%
\mathfrak{A}_{t}^{\alpha }$, $t>0$ is a L\'{e}vy process with non-negative
increments and therefore non-decreasing paths. Therefore, the inverse to a
stable subordinator $\mathfrak{L}_{t}^{\alpha }$, $t>0$ can be regarded as a
hitting time. Indeed, we define the inverse process by writing
\begin{equation}
Pr\{\mathfrak{L}_{t}^{\alpha }<x\}=Pr\{\mathfrak{A}_{x}^{\alpha }>t\}
\label{relP}
\end{equation}%
which means that
\begin{equation*}
\mathfrak{L}_{t}^{\alpha }=\inf \{s\geq 0\,:\,\mathfrak{A}_{s}^{\alpha
}\notin (0,t)\}.
\end{equation*}%
From the fact that
\begin{equation}
\mathbb{E}e^{-\xi \mathfrak{A}_{t}^{\alpha }}=e^{-t\xi ^{\alpha }}
\label{lap-sub}
\end{equation}%
after some algebra, formula \eqref{relP} says that
\begin{equation}
\mathbb{E}e^{-\xi \mathfrak{L}_{t}^{\alpha }}=E_{\alpha }(-t^{\alpha }\xi ),
\label{lap-inv}
\end{equation}%
where $E_{\rho}$ is a special case (for $\varrho =1$) of the generalized
Mittag-Leffler function%
\begin{equation*}
E_{\rho ,\varrho}(z)=\sum_{k=0}^{\infty }\frac{z^{k}}{\,\Gamma (\rho
k+\varrho )},\quad \Re \{\rho \}>0,\;\rho ,\varrho ,z\in \mathbb{C}.
\end{equation*}

The density of the inverse process $\mathfrak{L}_{t}^{\alpha }$, $t>0$, can
be written in terms of the Wright function
\begin{equation*}
W_{\rho ,\varrho }(z)=\sum_{k=0}^{\infty }\frac{z^{k}}{k!\,\Gamma (\rho
k+\varrho )},\quad \Re \{\varrho \}>0,\;\rho >-1,\;z\in \lbrack 0,\infty )
\end{equation*}%
as follows
\begin{equation}
l_{\alpha }(x,t)=\frac{1}{t^{\alpha }}W_{\alpha ,1-\alpha }\left( -\frac{x}{%
t^{\alpha }}\right) ,\quad x\geq 0,\;t>0 \label{density-l}
\end{equation}%
whereas, for the density of $\mathfrak{A}_{t}^{\alpha }$, $t>0$ we can write
(\cite{Dov-Wright})
\begin{equation*}
h_{\alpha }(x,t)=\frac{\alpha t}{x}l_{\alpha }(t,x).
\end{equation*}

Let
\begin{equation}
f_{t}^{\alpha ,\beta }(x)=\int_{0}^{\infty }h_{\alpha }(x,s)l_{\beta
}(s,t)ds,\quad x\geq 0,\;t>0,\;\alpha ,\beta \in (0,1)  \label{lawFproc}
\end{equation}%
be the law of the process $\mathfrak{F}_{t}^{\alpha ,\beta }$, $t>0$. Then
it is easy to check that the governing equation of \eqref{lawFproc} is given
by%
\begin{equation}
\left( \mathcal{D}^\beta_t -\partial _{x}^{\alpha
}\right) f_{t}^{\alpha ,\beta }(x)=0,\quad x\geq 0,\;t>0  \label{pdeFproc}
\end{equation}%
subject to the initial and boundary conditions
\begin{equation*}
\left\{
\begin{array}{l}
f_{0}^{\alpha ,\beta }(x)=\delta (x), \\
f_{t}^{\alpha ,\beta }(0)=0,%
\end{array}%
\right.
\end{equation*}%
where
\begin{equation*}
\mathcal{D}^\beta_t \, u=\partial _{t}^{\beta
}u-u(0^{+})\frac{t^{-\beta }}{\Gamma (1-\beta )}
\end{equation*}%
is the Dzhrbashyan-Caputo fractional derivative and
\begin{equation*}
\partial _{z}^{\alpha }u=\frac{\partial^\alpha u}{\partial z^\alpha} = \frac{1}{\Gamma (1-\alpha )}\frac{\partial }{%
\partial z}\int_{0}^{z}\frac{u(s)\,ds}{(z-s)^{\alpha }}
\end{equation*}%
is the Riemann-Liouville fractional derivative.

The following result will turn out to be useful further in the text.

\begin{lem}
The Laplace transform of \eqref{lawFproc} is given by
\begin{equation}
\mathbb{E}e^{-\xi \mathfrak{F}_{t}^{\alpha ,\beta }}=E_{\beta }(-t^{\beta
}\xi ^{\alpha })  \label{LapF}
\end{equation}%
and satisfies the following equation
\begin{equation}
\frac{\partial }{\partial t}\mathbb{E}e^{-\xi \mathfrak{F}_{t}^{\alpha
,\beta }}=\frac{\beta \xi }{\alpha t}\frac{\partial }{\partial \xi }\mathbb{E%
}e^{-\xi \mathfrak{F}_{t}^{\alpha ,\beta }},\quad \xi ,t>0.  \label{dertF}
\end{equation}
\end{lem}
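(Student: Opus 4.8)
The plan is to establish the Laplace transform formula \eqref{LapF} directly from the definition $\mathfrak{F}_t^{\alpha,\beta} = \mathfrak{A}_{\mathfrak{L}_t^\beta}^\alpha$ by conditioning, and then to verify \eqref{dertF} as a consequence of that closed form. First I would condition on the inverse stable process: using independence and \eqref{lap-sub},
\begin{equation*}
\mathbb{E}e^{-\xi \mathfrak{F}_t^{\alpha,\beta}} = \mathbb{E}\,\mathbb{E}\big[e^{-\xi \mathfrak{A}_s^\alpha}\big]_{s=\mathfrak{L}_t^\beta} = \mathbb{E}\,e^{-\xi^\alpha \mathfrak{L}_t^\beta},
\end{equation*}
and then invoke \eqref{lap-inv} with $\alpha$ replaced by $\beta$ and $\xi$ replaced by $\xi^\alpha$, which yields $\mathbb{E}e^{-\xi\mathfrak{L}_t^\beta} = E_\beta(-t^\beta (\cdot))$ evaluated at $\xi^\alpha$, i.e. exactly $E_\beta(-t^\beta \xi^\alpha)$. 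This gives \eqref{LapF}. (Alternatively, one could integrate $e^{-\xi x}$ against the density \eqref{lawFproc}, using the known Laplace transforms of $h_\alpha$ and $l_\beta$; both routes are short, but the conditioning argument is cleanest.)

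For \eqref{dertF}, I would set $g(\xi,t) := E_\beta(-t^\beta \xi^\alpha) = \sum_{k\geq 0} \frac{(-t^\beta \xi^\alpha)^k}{\Gamma(\beta k + 1)}$ and differentiate the series termwise in both $t$ and $\xi$. The key observation is that the map $(\xi,t)\mapsto g$ depends on its arguments only through the product $w := t^\beta \xi^\alpha$, so $\partial_t g = (\beta t^{\beta-1}\xi^\alpha)\, G'(w)$ and $\partial_\xi g = (\alpha t^\beta \xi^{\alpha-1})\,G'(w)$ where $G(w) = E_\beta(-w)$. Eliminating $G'(w)$ between these two identities gives
\begin{equation*}
\partial_t g = \frac{\beta t^{\beta-1}\xi^\alpha}{\alpha t^\beta \xi^{\alpha-1}}\,\partial_\xi g = \frac{\beta \xi}{\alpha t}\,\partial_\xi g,
\end{equation*}
which is precisely \eqref{dertF}. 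One should note the sign: since $G'(w) = -\sum_{k\geq 1}\frac{k(-w)^{k-1}}{\Gamma(\beta k+1)}\cdot(-1)$, the common factor cancels and no sign discrepancy arises.

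The only genuine point requiring care is the interchange of differentiation and summation in the Mittag-Leffler series — but since $E_\beta$ is entire (the series $\sum z^k/\Gamma(\beta k+1)$ has infinite radius of convergence for $\beta>0$) and $w=t^\beta\xi^\alpha$ is smooth in $(\xi,t)\in(0,\infty)^2$, termwise differentiation is justified by uniform convergence on compact subsets. I do not expect any serious obstacle here; the substance of the lemma is really just the self-similar scaling structure of $E_\beta(-t^\beta\xi^\alpha)$, and once \eqref{LapF} is in hand, \eqref{dertF} follows by a one-line chain-rule elimination.
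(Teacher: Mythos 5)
Your proposal is correct and follows essentially the same route as the paper: the paper also obtains \eqref{dertF} by computing $t\,\partial_t E_\beta(-t^\beta\xi^\alpha)$ and $\xi\,\partial_\xi E_\beta(-t^\beta\xi^\alpha)$ and eliminating the common factor (which it writes explicitly as $E_{\beta,\beta}(-t^\beta\xi^\alpha)$ via $\frac{d}{dz}E_\beta(-z)=-\frac{1}{\beta}E_{\beta,\beta}(-z)$, whereas you keep it abstract as $G'(w)$ — an immaterial difference). For \eqref{LapF} the paper merely asserts the formula, so your conditioning argument via \eqref{lap-sub} and \eqref{lap-inv} is the standard justification and is fine.
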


\begin{proof}
Since $\mathfrak{F}^{\alpha, \beta}_t$ has non-negative increments, the Laplace transform exists, i.e.
$$\mathbb{E} e^{-\xi \mathfrak{F}^{\alpha, \beta}_t} < \infty$$
and can be easily written as in \eqref{LapF}. In order to check \eqref{dertF} we recall that
\begin{align*}
\frac{d}{d z} E_\beta(-z)= & \frac{d}{d z} \sum_{k=0}^{\infty} \frac{(-z)^k}{\Gamma(\beta k + 1)} = - \frac{1}{\beta}  E_{\beta, \beta}(-z),
\end{align*}
so that
\begin{align}
t \frac{\partial}{\partial t} E_\beta(-t^\beta \xi^\alpha) = & - \xi^\alpha t^{\beta}  E_{\beta, \beta}(-t^\beta \xi^\alpha) \label{Ftmp1}
\end{align}
and
\begin{align}
\xi \frac{\partial}{\partial \xi} E_\beta(-t^\beta \xi^\alpha) = & - \frac{\alpha}{\beta} \xi^{\alpha} t^\beta E_{\beta, \beta}(-t^\beta \xi^\alpha). \label{Ftmp2}
\end{align}
Thus, by considering \eqref{Ftmp1} and \eqref{Ftmp2} together, we obtain
\begin{equation*}
\alpha t \frac{\partial}{\partial t} E_\beta(-t^\beta \xi^\alpha)  = \beta \xi \frac{\partial}{\partial \xi} E_\beta(-t^\beta \xi^\alpha)
\end{equation*}
which coincides with \eqref{dertF}.
\end{proof}

Let us consider the exit time of \eqref{Fproc} from the interval $(0,t)$,
i.e.%
\begin{equation*}
\mathcal{T}_{t}^{\alpha ,\beta }=\inf \{s\geq 0\,:\,\mathfrak{F}_{s}^{\alpha
,\beta }\notin (0,t)\},\quad t>0.
\end{equation*}%
Since $\mathfrak{F}_{t}^{\alpha ,\beta }$ has non-decreasing paths, we can
argue that it satisfies the relation
\begin{equation}
Pr\{\mathcal{T}_{t}^{\alpha ,\beta }<x\}=Pr\{\mathfrak{F}_{x}^{\alpha ,\beta
}>t\}.  \label{cond3}
\end{equation}

\begin{lem}
For $\alpha ,\beta \in (0,1)$, the following result holds true
\begin{equation*}
\mathcal{T}_{t}^{\alpha ,\beta }\overset{law}{=}\mathfrak{F}_{t}^{\beta
,\alpha }
\end{equation*}%
and thus
\begin{equation}
\mathfrak{F}_{t}^{\beta ,\alpha }\overset{law}{=}\inf \left\{ s\geq 0\,:\,%
\mathfrak{F}_{s}^{\alpha ,\beta }\notin (0,t)\right\} ,  \label{Fhitting}
\end{equation}%
where $\overset{law}{=}$ denotes the equality of the finite
dimensional distributions.
\end{lem}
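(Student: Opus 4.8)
The plan is to establish the equality in law by proving the much stronger pathwise identity
\[
\mathcal{T}^{\alpha,\beta}_t = \mathfrak{A}^{\beta}_{\mathfrak{L}^{\alpha}_t}, \qquad t>0,
\]
almost surely, on a single probability space carrying two independent stable subordinators $\mathfrak{A}^{\alpha}$, $\mathfrak{A}^{\beta}$ of orders $\alpha,\beta \in (0,1)$, where $\mathfrak{L}^{\alpha}_t = \inf\{s\geq 0 : \mathfrak{A}^{\alpha}_s > t\}$. On such a space one realizes $\mathfrak{F}^{\alpha,\beta} = \mathfrak{A}^{\alpha}\circ\mathfrak{L}^{\beta}$ (with $\mathfrak{L}^{\beta}$ built from $\mathfrak{A}^{\beta}$) together with its exit-time process $\mathcal{T}^{\alpha,\beta}$, and simultaneously $\mathfrak{F}^{\beta,\alpha} = \mathfrak{A}^{\beta}\circ\mathfrak{L}^{\alpha}$, each with the correct law since $\mathfrak{A}^{\beta}$ is independent of $\mathfrak{A}^{\alpha}$; a pathwise identity between the first and the last then yields equality of all finite-dimensional distributions, which is exactly the assertion (and is consistent with \eqref{cond3}).

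The computation I would carry out rests on two elementary inverse relations, available because for $\alpha,\beta\in(0,1)$ the processes $\mathfrak{A}^{\alpha},\mathfrak{A}^{\beta}$ are right-continuous, strictly increasing and driftless (infinite L\'evy measure), so that $\mathfrak{L}^{\alpha},\mathfrak{L}^{\beta}$ are continuous: almost surely $\{u\geq 0 : \mathfrak{A}^{\alpha}_u > t\} = (\mathfrak{L}^{\alpha}_t,\infty)$ up to its left endpoint, and $\{s\geq 0 : \mathfrak{L}^{\beta}_s > c\} = (\mathfrak{A}^{\beta}_c,\infty)$ for every $c\geq 0$. Granting these, and using that $\mathfrak{F}^{\alpha,\beta}$ is non-decreasing with $\mathfrak{F}^{\alpha,\beta}_0 = 0$, so that it leaves $(0,t)$ upward, I would write
\begin{align*}
\mathcal{T}^{\alpha,\beta}_t
&= \inf\{s\geq 0 : \mathfrak{A}^{\alpha}_{\mathfrak{L}^{\beta}_s} > t\} \\
&= \inf\{s\geq 0 : \mathfrak{L}^{\beta}_s > \mathfrak{L}^{\alpha}_t\} = \mathfrak{A}^{\beta}_{\mathfrak{L}^{\alpha}_t} = \mathfrak{F}^{\beta,\alpha}_t ,
\end{align*}
where the passage to the second line applies the first inverse relation with $u=\mathfrak{L}^{\beta}_s$ and the passage to the third applies the second one with $c=\mathfrak{L}^{\alpha}_t$.

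As an independent check at the level of one-dimensional laws, one can argue by Laplace transforms using the previous lemma: denoting by $g^{\alpha,\beta}(x,t)$ the density in $x$ of $\mathcal{T}^{\alpha,\beta}_t$, relation \eqref{cond3} gives $\int_0^x g^{\alpha,\beta}(y,t)\,dy = Pr\{\mathfrak{F}^{\alpha,\beta}_x > t\}$; transforming in $t$ (variable $\mu$) turns the right-hand side into $\mu^{-1}\big(1-E_{\beta}(-x^{\beta}\mu^{\alpha})\big)$ by \eqref{LapF}, and a further transform in $x$ (variable $\xi$), after one integration by parts and the standard Laplace pair $\int_0^\infty e^{-s r}E_{\beta}(-\lambda r^{\beta})\,dr = s^{\beta-1}/(s^{\beta}+\lambda)$, produces $\mu^{\alpha-1}/(\mu^{\alpha}+\xi^{\beta})$; this is precisely the double Laplace transform of the density of $\mathfrak{F}^{\beta,\alpha}_t$ read off from \eqref{LapF} with $\alpha$ and $\beta$ interchanged.

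I expect the only genuinely delicate point to be the rigorous justification of the two inverse relations, i.e. the treatment of the jump intervals of $\mathfrak{A}^{\alpha},\mathfrak{A}^{\beta}$, the corresponding flat stretches of $\mathfrak{L}^{\alpha},\mathfrak{L}^{\beta}$, and the a.s.-negligible event that the level $\mathfrak{L}^{\alpha}_t$ coincides with a jump location of $\mathfrak{A}^{\beta}$ (which would render the endpoints in the infima ambiguous); off this null event the chain of equalities above is unambiguous. Once the pathwise identity is in hand, no separate work is needed to upgrade from one-dimensional to finite-dimensional distributions.
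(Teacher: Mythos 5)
Your argument is correct, but it takes a genuinely different route from the paper. The paper proves the lemma purely at the level of transforms: it computes the double Laplace transform of the law of $\mathcal{T}^{\alpha,\beta}_t$ via \eqref{cond3}, \eqref{LapF} and \eqref{Ftmp1}, and matches it with the double Laplace transform \eqref{dobleL1} of $\mathfrak{F}^{\beta,\alpha}_t$ --- your ``independent check'' is essentially that computation verbatim. Your main argument is instead the pathwise inversion identity $\mathcal{T}^{\alpha,\beta}_t=\mathfrak{A}^{\beta}_{\mathfrak{L}^{\alpha}_t}$ a.s.\ for each fixed $t$, obtained by unwinding the two first-passage relations $\{u:\mathfrak{A}^{\alpha}_u>t\}=[\mathfrak{L}^{\alpha}_t,\infty)$ (a.s., since a stable subordinator does not creep over a fixed level) and $\{s:\mathfrak{L}^{\beta}_s>c\}=(\mathfrak{A}^{\beta}_c,\infty)$, together with the observation that $\mathfrak{L}^{\alpha}_t$, being independent of $\mathfrak{A}^{\beta}$ and absolutely continuous, a.s.\ avoids the countable set of jump times of $\mathfrak{A}^{\beta}$. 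This buys something real: the transform argument as written only identifies one-dimensional marginals, whereas the lemma asserts equality of finite-dimensional distributions, and your pathwise identity delivers exactly that (apply it at $t_1,\dots,t_n$ and discard a finite union of null sets). The two delicate points you flag are the right ones and are disposed of as you indicate. One cosmetic remark: read literally, $\inf\{s\geq 0:\mathfrak{F}^{\alpha,\beta}_s\notin(0,t)\}$ is $0$ because $\mathfrak{F}^{\alpha,\beta}_0=0$; both you and the paper tacitly interpret $\mathcal{T}^{\alpha,\beta}_t$ as the first passage above level $t$, which is the reading forced by \eqref{cond3}, so it is worth saying so explicitly.
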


\begin{proof}
Considering together \eqref{lap-sub} and
\begin{equation*}
\int_0^\infty e^{-\mu t} \frac{Pr\{\mathfrak{L}^\alpha_t \in dx \}}{dx} dt = \mu^{\alpha -1}e^{-x \mu^\alpha},
\end{equation*}
the composition
\begin{equation*}
\mathfrak{F}^{\beta, \alpha}_t = \mathfrak{A}^\beta_{\mathfrak{L}^\alpha_t}, \quad t>0
\end{equation*}
has double Laplace transform given by
\begin{equation}
\int_0^\infty e^{-\mu t} \, \mathbb{E} e^{- \xi \mathfrak{F}^{\beta, \alpha}_t} dt = \frac{\mu^{\alpha -1}}{\mu^\alpha + \xi^\beta}.\label{dobleL1}
\end{equation}
We now assume that \eqref{Fhitting} holds. By applying \eqref{cond3}, we get that
\begin{equation}
\frac{Pr\{ \mathfrak{F}^{\beta, \alpha}_t \in dx \}}{dx} =  - \frac{\partial}{\partial x} Pr\{ \mathfrak{F}^{\alpha, \beta}_x < t \}. \label{lapFlaw}
\end{equation}
The Laplace transform of \eqref{lapFlaw} reads
\begin{align*}
\int_0^\infty e^{-\mu t} \frac{Pr\{ \mathfrak{F}^{\beta, \alpha}_t \in dx \}}{dx} dt=  & - \frac{\partial}{\partial x} \frac{1}{\mu} \int_0^\infty e^{-\mu t} Pr\{ \mathfrak{F}^{\alpha, \beta}_x \in dt \}\\
= &  - \frac{\partial}{\partial x} \frac{1}{\mu} \mathbb{E} e^{-\mu  \mathfrak{F}^{\alpha, \beta}_x } =   - \frac{\partial}{\partial x} \frac{1}{\mu} E_\beta(- x^\beta \mu^\alpha)\\
= & [\textrm{by } \eqref{Ftmp1}] =  x^{\beta -1}\mu^{\alpha -1} E_{\beta , \beta}(-x^\beta \mu^\alpha),
\end{align*}
while its double Laplace transform is given by
\begin{align}
\int_0^\infty e^{-\xi x} \int_0^\infty e^{-\mu t} Pr\{ \mathfrak{F}^{\beta, \alpha}_t \in dx \}dt = & \int_0^\infty e^{-\xi x} x^{\beta -1}\mu^{\alpha -1} E_{\beta , \beta}(-x^\beta \mu^\alpha)dx\notag \\
= & \frac{\mu^{\alpha -1}}{\mu^\alpha + \xi^\beta}.\label{dobleL2}
\end{align}
Formula \eqref{dobleL2} coincides with \eqref{dobleL1} and this proves the claim \eqref{Fhitting}.
\end{proof}

\begin{os}
We observe that
\begin{equation}
\frac{\mathfrak{F}_{t}^{\alpha ,\beta }}{t}\xrightarrow{t \to \infty}%
\left\{
\begin{array}{ll}
+\infty  & \beta >\alpha  \\
W & \beta =\alpha  \\
0 & \beta <\alpha
\end{array}%
\right.
\label{as}
\end{equation}%
where $W$ represents the ratio of two independent stable subordinators of order $\alpha $ and possesses Lamperti distribution (see for example \cite{lanc2010}). The convergence in \eqref{as} must be understood in distribution as we can immediately verify by looking at
\begin{equation*}
\mathbb{E}\exp \left( -\mu \frac{\mathfrak{F}_{t}^{\alpha ,\beta }}{t}%
\right) =E_{\beta }(-\mu ^{\alpha }t^{\beta -\alpha }).
\end{equation*}%
Indeed, it is easy to check that $E_{\beta }(-\infty )=0$ and $E_{\beta }(0)=1$. Thus, for $%
t\rightarrow \infty $, we have that
\begin{equation*}
Pr\{\mathfrak{F}_{t}^{\alpha ,\beta }>t\}=1,\quad \textrm{if }\;  \beta >\alpha ,
\end{equation*}%
\begin{equation*}
Pr\{\mathfrak{F}_{t}^{\alpha ,\beta }<t\}=1,\quad \textrm{if }\; \beta <\alpha .
\end{equation*}
\end{os}

We study now the time-changed Poisson process. We refer to $N(t)$, $t>0$ as
the base process and to $\mathfrak{F}_{t}^{\alpha ,\beta }$ as the
time-change process. Thus, the resulting composition
\begin{equation}
N(\mathfrak{F}_{t}^{\alpha ,\beta }),\quad t>0, \quad \alpha, \beta \in (0,1]  \label{Nsub}
\end{equation}%
is a time-changed Poisson process with probability law
\begin{equation*}
Pr\{N(\mathfrak{F}_{t}^{\alpha ,\beta })=k\}=p_{k}(t;\alpha ,\beta
)=\int_{0}^{\infty }p_{k}(s)f_{t}^{\alpha ,\beta }(s)ds,
\end{equation*}%
where we use the following notation
\begin{align}
p_{k}(t;\alpha ,\beta )=& \mathbb{E}\frac{(-\lambda \partial _{\lambda })^{k}
}{k!}e^{-\lambda \mathfrak{F}_{t}^{\alpha ,\beta }}  =  \frac{(-\lambda \partial _{\lambda })^{k}}{k!}\mathbb{E}e^{-\lambda \mathfrak{F}_{t}^{\alpha ,\beta }} =  \frac{(-\lambda \partial _{\lambda  })^{k}}{k!}E_{\beta }(-t^{\beta}\lambda ^{\alpha }).  \label{probNsub}
\end{align}%
The probability generating function of $N(\mathfrak{F}_{t}^{\alpha ,\beta })$%
, $t>0$, is given by
\begin{align}
G_{\beta }^{\alpha }(u,t)=& \mathbb{E}u^{N(\mathfrak{F}_{t}^{\alpha ,\beta
})}  =  \sum_{k=0}^{\infty }\frac{(-u\lambda \partial _{\lambda })^{k}}{k!}%
\mathbb{E}e^{-\lambda \mathfrak{F}_{t}^{\alpha ,\beta }}  \label{gen-fun-Poi-s-t} \\
=& e^{-u\lambda \partial _{\lambda }}\mathbb{E}e^{-\lambda \mathfrak{F}%
_{t}^{\alpha ,\beta }} =  \mathbb{E}e^{-\lambda (1-u)\mathfrak{F}_{t}^{\alpha ,\beta }}  \notag \\
=& E_{\beta }(-t^{\beta }\lambda ^{\alpha }(1-u)^{\alpha }).  \notag
\end{align}

\begin{te}
The probability law \eqref{probNsub} of the time-changed Poisson process \eqref{Nsub} is the solution to the fractional differential equation
\begin{equation}
\left( \mathcal{D}^\beta_t +\lambda ^{\alpha }(I-B)^{\alpha
}\right) p_{k}(t;\alpha ,\beta )=0,\quad k=0,1,2\ldots ,\;t>0
\label{eq-Pois-frac-time-space}
\end{equation}%
with initial condition
\begin{equation*}
p_{k}(0;\alpha ,\beta )=\left\{
\begin{array}{ll}
0, & k\geq 1, \\
1, & k=0,%
\end{array}%
\right.
\end{equation*}%
where
\begin{equation*}
(I-B)^{\alpha }=\sum_{j=0}^{\infty }(-1)^{j}\binom{\alpha }{j}B^{j}.
\end{equation*}%
Furthermore, the waiting time of the k-th event of $N(\mathfrak{F}%
_{t}^{\alpha ,\beta })$, $t>0$, i.e.
\begin{equation*}
T_{k}^{\alpha ,\beta }=\inf \left\{ s\geq 0\,:\,N(\mathfrak{F}_{s}^{\alpha
,\beta })>k\right\}
\end{equation*}%
has density given by
\begin{equation*}
Pr\{T_{k}^{\alpha ,\beta }\in dt\} / dt=\frac{\beta k}{\alpha t}\frac{%
(-\lambda \partial _{\lambda })^{k}}{k!}E_{\beta }(-t^{\beta }\lambda
^{\alpha })=\frac{\beta k}{\alpha t}p_{k}(t;\alpha ,\beta )
\end{equation*}%
and
\begin{equation*}
T_{k}^{\alpha ,\beta }\overset{law}{=}\mathfrak{F}_{T_{k}}^{\beta ,\alpha
},\quad k=1,2,\ldots .
\end{equation*}
\end{te}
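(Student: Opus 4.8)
The plan is to treat the three assertions in turn, using throughout the probability generating function $G^{\alpha}_{\beta}(u,t)=\mathbb{E}\,u^{N(\mathfrak{F}^{\alpha,\beta}_{t})}=E_{\beta}\big(-t^{\beta}\lambda^{\alpha}(1-u)^{\alpha}\big)$ of \eqref{gen-fun-Poi-s-t}, so that $p_{k}(t;\alpha,\beta)$ is the coefficient of $u^{k}$ in $G^{\alpha}_{\beta}(u,t)$, together with the ODE \eqref{dertF} and the exit-time identity \eqref{Fhitting}. For the fractional equation I would first record the eigen-relation $\mathcal{D}^{\beta}_{t}E_{\beta}(-\mu t^{\beta})=-\mu\,E_{\beta}(-\mu t^{\beta})$ for $\mu\ge0$, which is immediate on the Laplace side since $\mathcal{L}[\mathcal{D}^{\beta}_{t}u](s)=s^{\beta}\widetilde{u}(s)-s^{\beta-1}u(0^{+})$ and $\int_{0}^{\infty}e^{-st}E_{\beta}(-\mu t^{\beta})\,dt=s^{\beta-1}/(s^{\beta}+\mu)$. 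Taking $\mu=\lambda^{\alpha}(1-u)^{\alpha}$ gives, by linearity, $\sum_{k\ge0}u^{k}\,\mathcal{D}^{\beta}_{t}p_{k}(t;\alpha,\beta)=\mathcal{D}^{\beta}_{t}G^{\alpha}_{\beta}(u,t)=-\lambda^{\alpha}(1-u)^{\alpha}G^{\alpha}_{\beta}(u,t)$. Since $B$ shifts $k\mapsto k-1$ and $p_{k}\equiv0$ for $k<0$, a Cauchy product with $(1-u)^{\alpha}=\sum_{j\ge0}(-1)^{j}\binom{\alpha}{j}u^{j}$ shows $\sum_{k\ge0}u^{k}(I-B)^{\alpha}p_{k}(t;\alpha,\beta)=(1-u)^{\alpha}G^{\alpha}_{\beta}(u,t)$; adding the two identities and comparing the coefficients of $u^{k}$ yields \eqref{eq-Pois-frac-time-space}, and $G^{\alpha}_{\beta}(u,0)=E_{\beta}(0)=1$ gives the initial datum. (Equivalently, one applies $\tfrac{(-\lambda\partial_{\lambda})^{k}}{k!}$ directly to $\mathcal{D}^{\beta}_{t}\mathbb{E}e^{-\lambda\mathfrak{F}^{\alpha,\beta}_{t}}=-\lambda^{\alpha}\mathbb{E}e^{-\lambda\mathfrak{F}^{\alpha,\beta}_{t}}$, using the Leibniz expansion $\partial_{\lambda}^{k}(\lambda^{\alpha}g)=\sum_{j=0}^{k}\binom{k}{j}j!\binom{\alpha}{j}\lambda^{\alpha-j}\partial_{\lambda}^{k-j}g$ to recognise $\lambda^{\alpha}(I-B)^{\alpha}p_{k}$ on the right.)

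For the waiting-time density I would use that $s\mapsto N(\mathfrak{F}^{\alpha,\beta}_{s})$ is non-decreasing, so $Pr\{T^{\alpha,\beta}_{k}>t\}=Pr\{N(\mathfrak{F}^{\alpha,\beta}_{t})<k\}=\sum_{j=0}^{k-1}p_{j}(t;\alpha,\beta)$ and the density equals $-\partial_{t}\sum_{j=0}^{k-1}p_{j}$. To evaluate $\partial_{t}p_{j}$ I would push \eqref{dertF} through the substitution $\xi=\lambda(1-u)$: since $\partial_{\xi}\mathbb{E}e^{-\xi\mathfrak{F}^{\alpha,\beta}_{t}}\big|_{\xi=\lambda(1-u)}=-\lambda^{-1}\partial_{u}G^{\alpha}_{\beta}(u,t)$, equation \eqref{dertF} becomes $\partial_{t}G^{\alpha}_{\beta}=-\tfrac{\beta(1-u)}{\alpha t}\,\partial_{u}G^{\alpha}_{\beta}$, whose coefficient of $u^{k}$ gives $\partial_{t}p_{k}=-\tfrac{\beta}{\alpha t}\big((k+1)p_{k+1}-k\,p_{k}\big)$ for $k\ge1$ and $\partial_{t}p_{0}=-\tfrac{\beta}{\alpha t}\,p_{1}$. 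Summing these over $j=0,\dots,k-1$, the right-hand sides telescope to $-\tfrac{\beta k}{\alpha t}\,p_{k}(t;\alpha,\beta)$, which is the claimed density.

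For the identity in law I would condition on $\mathfrak{F}^{\alpha,\beta}_{x}$ and use the independence of $N$ and $\mathfrak{F}^{\alpha,\beta}$: by monotonicity and right-continuity, $Pr\{T^{\alpha,\beta}_{k}<x\}=Pr\{N(\mathfrak{F}^{\alpha,\beta}_{x})\ge k\}=Pr\{\mathfrak{F}^{\alpha,\beta}_{x}\ge T_{k}\}=Pr\{\mathfrak{F}^{\alpha,\beta}_{x}>T_{k}\}$, the last equality holding because $\mathfrak{F}^{\alpha,\beta}_{x}$ has a density and $T_{k}$ is independent of it. By \eqref{cond3} and \eqref{Fhitting} one has, for each fixed $t$, $Pr\{\mathfrak{F}^{\alpha,\beta}_{x}>t\}=Pr\{\mathcal{T}^{\alpha,\beta}_{t}<x\}=Pr\{\mathfrak{F}^{\beta,\alpha}_{t}<x\}$; integrating against the law of $T_{k}$ and using the independence of $\mathfrak{F}^{\beta,\alpha}$ from $T_{k}$ gives $Pr\{T^{\alpha,\beta}_{k}<x\}=\int_{0}^{\infty}Pr\{\mathfrak{F}^{\beta,\alpha}_{t}<x\}\,Pr\{T_{k}\in dt\}=Pr\{\mathfrak{F}^{\beta,\alpha}_{T_{k}}<x\}$, that is $T^{\alpha,\beta}_{k}\overset{law}{=}\mathfrak{F}^{\beta,\alpha}_{T_{k}}$. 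Since \eqref{Fhitting} is an identity of finite-dimensional distributions of processes and the $T_{k}$ are jointly independent of $\mathfrak{F}^{\alpha,\beta}$, the joint version over $k$ follows the same way.

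The Mittag-Leffler manipulations are routine; the main obstacle is the bookkeeping in the second part, where one must correctly pin down the tail $\sum_{j<k}p_{j}$ to be differentiated and extract the right three-term recursion for $\partial_{t}p_{k}$ from \eqref{dertF} --- a different grouping produces $(k+1)p_{k+1}/t$ in place of $k\,p_{k}/t$, quantities that coincide only when $\alpha=\beta=1$. A secondary point is that in the third part the first time $N(\mathfrak{F}^{\alpha,\beta}_{\cdot})$ reaches level $k$ agrees almost surely with the exit time of $\mathfrak{F}^{\alpha,\beta}$ from $(0,T_{k})$, which is why I would route the argument through the distributional relation \eqref{cond3} rather than through sample paths.
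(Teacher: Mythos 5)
Your proposal is correct, and while the first part follows the paper's route, the second and third parts are argued differently. For the fractional equation you, like the paper, apply $\mathcal{D}^\beta_t$ to $G^\alpha_\beta(u,t)=E_\beta(-t^\beta\lambda^\alpha(1-u)^\alpha)$ via the eigenfunction property; but where the paper identifies the symbol of $(I-B)^\alpha$ through the Bernstein integral $x^\alpha=\frac{\alpha}{\Gamma(1-\alpha)}\int_0^\infty(1-e^{-sx})s^{-\alpha-1}ds$ and the Poisson semigroup $P_s$, you get the same relation $\sum_k u^k (I-B)^\alpha p_k=(1-u)^\alpha G^\alpha_\beta$ by a direct Cauchy product with the binomial series --- shorter, and equivalent since $p_{k-j}=0$ for $j>k$. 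For the waiting-time density the paper works with the tail sum $\sum_{m\ge k}p_m$, collapses it with the operator identity $\partial_\lambda^m(\lambda f)=[m\partial_\lambda^{m-1}+\lambda\partial_\lambda^m]f$, and only then invokes \eqref{dertF}; you instead convert \eqref{dertF} into the transport equation $\partial_tG=-\frac{\beta(1-u)}{\alpha t}\partial_uG$, read off the three-term recursion $\partial_tp_k=-\frac{\beta}{\alpha t}\big((k+1)p_{k+1}-kp_k\big)$, and telescope the head sum $\sum_{j<k}p_j$; both hinge on \eqref{dertF} and your telescoping is clean. The real divergence is in the identity $T_k^{\alpha,\beta}\overset{law}{=}\mathfrak{F}^{\beta,\alpha}_{T_k}$: the paper verifies it by explicitly computing the density of $\mathfrak{F}^{\beta,\alpha}_{T_k}$ through the Wright-function integrals $\int_0^\infty e^{-\lambda z}f^{\beta,\alpha}_z(t)dz=\lambda^{\alpha-1}t^{\beta-1}E_{\beta,\beta}(-t^\beta\lambda^\alpha)$ and matching it with the density from the second part, whereas you derive it structurally from $\{N(\mathfrak{F}^{\alpha,\beta}_x)\ge k\}=\{\mathfrak{F}^{\alpha,\beta}_x\ge T_k\}$ combined with the inversion relations \eqref{cond3} and \eqref{Fhitting}. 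Your route explains \emph{why} the identity holds and needs no special-function computation (only the observation that $Pr\{\mathfrak{F}^{\alpha,\beta}_x=T_k\}=0$ by independence and absolute continuity); the paper's route yields, as a by-product, an independent confirmation of the density formula $\frac{\beta k}{\alpha t}p_k(t;\alpha,\beta)$. Your indexing conventions for $T_k$ (hitting time of level $k$, Gamma$(k,\lambda)$ law for $T_k$) match the paper's actual usage rather than the literal infimum with strict inequality, which is the consistent choice.
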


\begin{proof}
From the probability generating function \eqref{gen-fun-Poi-s-t} we get that
\begin{equation*}
\mathcal{D}^\beta_t G^\alpha_\beta(u, t) = - \lambda^\alpha (1-u)^\alpha\, G^\alpha_\beta(u, t),
\end{equation*}
since the Mittag-Leffler is an eigenfunction for the Dzhrbashyan-Caputo fractional derivative. Let us consider the auxiliary function $f \in L^1(\mathbb{R}_+)$.  By considering the Bernstein function
\begin{equation}
x^\alpha = \frac{\alpha}{\Gamma(1-\alpha)} \int_{0}^\infty \left( 1-e^{-sx}\right) \frac{ds}{s^{\alpha +1}}
\label{typical-bern-function}
\end{equation}
we formally write
\begin{align*}
(I-B)^\alpha f = & \frac{\alpha}{\Gamma(1-\alpha)} \int_{0}^\infty \left( f - e^{-s(I-B)} f \right) \frac{ds}{s^{\alpha +1}}  \\
= & \frac{\alpha}{\Gamma(1-\alpha)} \int_{0}^\infty \left( f - P_s f \right) \frac{ds}{s^{\alpha +1}}
\end{align*}
where we denote by $P_s$ the transition semigroup
\begin{equation}
P_sf(x) = \mathbb{E}f(x- N(s)) = \sum_{k=0}^\infty f(x-k)  \frac{s^k}{k!}e^{- s}
\label{ps}
\end{equation}
as in Remark \ref{remark-semigroup-pois} with $a=0$ and $\lambda=1$.  The Laplace transform of \eqref{ps} is given by
\begin{equation*}
\widetilde{P_sf}(\xi) = \widetilde{f}(\xi)\, e^{- s \Psi(i \xi)} = \widetilde{f}(\xi)\, \mathbb{E} e^{-\xi N(s)},
\end{equation*}
where $\Psi(\xi)= (1- e^{i\xi})$ and $N$ is now a Poisson process with $\lambda=1$.
For $ 0 < u < 1$, we immediately get
\begin{equation*}
\widetilde{P_sf}(-\log u) = \widetilde{f}(-\log u) e^{-s(1-u)} = \widetilde{f}(-\log u) \, \mathbb{E} u^{N(s)}
\end{equation*}
and therefore, we have that
\begin{align*}
\widetilde{(I-B)^\alpha f}(-\log u) = & \frac{\alpha}{\Gamma(1-\alpha)} \int_{0}^\infty \left( \widetilde{f}(-\log u) - \widetilde{P_sf}(-\log u) \right) \frac{ds}{s^{\alpha +1}}\\
= & \frac{\alpha}{\Gamma(1-\alpha)} \int_{0}^\infty \left( 1 - e^{-s (1-u)}  \right) \frac{ds}{s^{\alpha +1}}\; \widetilde{f}(-\log u)\\
= & (1-u)^\alpha\, \widetilde{f}(-\log u).
\end{align*}
We now consider a function $f(x,t)$ such that $|\frac{\partial}{\partial t}f(x,t)| \leq t^{\gamma -1} g(x)$ with $\gamma>0$ and $g \in L^\infty(\mathbb{R}_+)$ which is, as a function of $x$, consistent with the previous assumption. Thus, by imposing that
\begin{equation*}
-\lambda^\alpha \widetilde{(I-B)^\alpha f}(-\log u, t) = - \lambda^\alpha\,  (1-u)^\alpha\, \widetilde{f}(-\log u, t) = \mathcal{D}^\beta_t\, \widetilde{f}(-\log u, t)
\end{equation*}
 and considering that $G^\alpha_\beta(u,0)=1$, we get
\begin{equation*}
\widetilde{f}(-\log u, t) = G^\alpha_\beta (u, t),
\end{equation*}
which proves that equation \eqref{eq-Pois-frac-time-space} is satisfied. The fact that $|\frac{\partial}{\partial t} f(\cdot, t)| \leq t^{\gamma-1}$ for some $\gamma >0$ is a standard requirement for the existence of $\mathcal{D}^\beta_t$ which comes directly from the definition of the fractional derivative. Furthermore, the Laplace techniques ensure uniqueness.\\

Now we focus on the waiting time $T^{\alpha, \beta}_k$, $k \in \mathbb{N}$: its probability distribution function can be written as
\begin{align*}
Pr\{ T^{\alpha, \beta}_k \leq t \} = & Pr\{ N(\mathfrak{F}^{\alpha, \beta}_t) \geq k \}\\
= & \sum_{m=k}^\infty p_m(t; \alpha, \beta)\\
= & \sum_{m=k}^\infty \frac{(-\lambda)^m}{m!} \partial_\lambda^m \lambda \frac{\mathbb{E} e^{-\lambda \mathfrak{F}^{\alpha, \beta}_t}}{\lambda}.
\end{align*}
Since
\begin{align*}
\partial_{\lambda}^m \lambda f(\lambda ) = & \partial_{\lambda}^{m-1} \left[ 1 + \lambda \partial_{\lambda } \right] f(\lambda )\\
= & \partial_{\lambda}^{m-2} \left[ 2\partial_{\lambda} + \lambda \partial_{\lambda }^2 \right] f(\lambda )\\
= & [\text{for any } k \leq m]\\
= & \partial_{\lambda}^{m-k} \left[ k\partial_{\lambda}^{k-1} + \lambda \partial_{\lambda }^k \right] f(\lambda ) \quad \\\
= & \left[ m\partial_{\lambda}^{m-1} + \lambda \partial_{\lambda}^m \right] f(\lambda),
\end{align*}
we can write that
\begin{align*}
Pr\{ T^{\alpha, \beta}_k \leq t \} = & \sum_{m=k}^\infty \frac{(-\lambda)^m}{m!} \left[ m \partial_\lambda^{m-1} + \lambda \partial_\lambda^m \right]  \frac{\mathbb{E} e^{-\lambda \mathfrak{F}^{\alpha, \beta}_t}}{\lambda}\\
= & \sum_{m=k} \left[ -\lambda \frac{(-\lambda \partial_\lambda)^{m-1}}{(m-1)!} + \lambda \frac{(-\lambda \partial_\lambda)^m}{m!} \right] \frac{\mathbb{E} e^{-\lambda \mathfrak{F}^{\alpha, \beta}_t}}{\lambda}\\
= & \left[ -\lambda \sum_{m=k-1}^\infty \frac{(-\lambda \partial_\lambda)^m}{m!} + \lambda \sum_{m=k}^\infty \frac{(-\lambda \partial_\lambda)^m}{m!} \right] \frac{\mathbb{E} e^{-\lambda \mathfrak{F}^{\alpha, \beta}_t}}{\lambda}\\
= & - \lambda \frac{(-\lambda \partial_\lambda)^{k-1}}{(k-1)!} \frac{\mathbb{E} e^{-\lambda \mathfrak{F}^{\alpha, \beta}_t}}{\lambda}.
\end{align*}
Therefore, we get
\begin{align*}
Pr\{ T^{\alpha, \beta}_k \in dt \} / dt = & - \lambda \frac{(-\lambda \partial_\lambda)^{k-1}}{(k-1)!} \frac{\partial}{\partial t} \frac{\mathbb{E} e^{-\lambda \mathfrak{F}^{\alpha, \beta}_t}}{\lambda}\\
= & [\textrm{by } \eqref{dertF}]\\
= & -\lambda \frac{(-\lambda \partial_\lambda)^{k-1}}{(k-1)!} \frac{\beta}{\alpha t} \partial_{\lambda} \mathbb{E} e^{-\lambda \mathfrak{F}^{\alpha, \beta}_t}\\
= & \frac{\beta k}{\alpha t}\frac{(-\lambda \partial_\lambda)^{k}}{k!}\mathbb{E} e^{-\lambda \mathfrak{F}^{\alpha, \beta}_t}\\
= & \frac{\beta k}{\alpha t} p_k(t;\alpha, \beta).
\end{align*}

As a second step we show that
\begin{equation*}
T^{\alpha, \beta}_k  \stackrel{law}{=} \mathfrak{F}^{\beta, \alpha}_{T_k}, \quad k =1,2, \ldots .
\end{equation*}
Let us consider the density
\begin{align*}
Pr\{ \mathfrak{F}^{\beta, \alpha}_{T_k} \in dt \}/dt = & \frac{\lambda^k}{(k-1)!} \int_0^\infty z^{k-1} e^{-\lambda z} f_z^{\beta, \alpha}(t) dz \\
= &  \frac{\lambda}{(k-1)!} \int_0^\infty (-\lambda \partial_{\lambda})^{k-1} e^{-\lambda z} f_z^{\beta, \alpha}(t)dz
\end{align*}
where
\begin{align*}
\int_0^\infty  e^{-\lambda z} f_z^{\beta, \alpha}(t)dz = & \int_0^\infty e^{-\lambda z} \int_0^\infty h_\beta(t,s)l_\alpha(s, z)dsdz\\
= & \int_0^\infty h_\beta(t,s) \lambda^{\alpha -1} e^{-s\lambda^\alpha}ds\\
= & \lambda^{\alpha-1} t^{\beta-1} E_{\beta, \beta}(-t^\beta \lambda^\alpha).
\end{align*}
In the last steps we used formulae \eqref{lap-inv} and \eqref{lap-sub}. Therefore, we get that
\begin{align*}
Pr\{ \mathfrak{F}^{\beta, \alpha}_{T_k} \in dt \}/dt = &\frac{\lambda}{(k-1)!} (-\lambda \partial_{\lambda})^{k-1} \lambda^{\alpha-1} t^{\beta-1} E_{\beta, \beta}(-t^\beta \lambda^\alpha)\\
= & \frac{\beta}{\alpha t}\frac{(-\lambda \partial_\lambda)^k}{ (k-1)!} E_\beta(-t^\beta \lambda^\alpha)\\
=&\frac{\beta k}{\alpha t} p_k(t; \alpha, \beta).
\end{align*}
This concludes the proof.
\end{proof}

\begin{os}
Orsingher and Polito \cite{ors-pol-SPL} proved that the solution to %
\eqref{eq-Pois-frac-time-space} can be written as follows
\begin{equation}
p_k(t; \alpha, \beta)=\frac{(-1)^{k}}{k!}\sum_{r=0}^{\infty }\frac{(-\lambda ^{\alpha }t^{\beta
})^{r}}{\Gamma (\beta r+1)}\frac{\Gamma (r+1)}{\Gamma (\alpha r+1-k)},\quad
k\geq 0,\;\alpha \in (0,1],\;\beta \in (0,1].  \label{ors-pol-sol}
\end{equation}%
After some calculation, we can see that \eqref{ors-pol-sol} coincides with
our compact representation given in \eqref{probNsub}.
\end{os}

 As special cases for $\alpha =1$ or $\beta =1$, we can obtain
from Theorem 2 some results on well-known processes. Indeed, in the first case, the subordinated
Poisson process coincides with the time-fractional Poisson process studied
in \cite{Rep, BegOrs, Mai, Mee}).

\begin{coro}
For $\alpha =1$, the probability law of the time-changed Poisson process $N(%
\mathfrak{F}_{t}^{1,\beta })=N(\mathfrak{L}_{t}^{\beta }),$ $t>0,\;\beta \in
(0,1]$, i.e.%
\begin{equation*}
p_{k}(t;1,\beta )=P\{N(\mathfrak{L}_{t}^{\beta })=k\}=\frac{(-\lambda
\partial _{\lambda })^{k}}{k!}E_{\beta }(-t^{\beta }\lambda )
\end{equation*}%
satisfies the following equation
\begin{equation*}
\left( \mathcal{D}^\beta_t +\lambda (I-B)\right) p_{k}(t;1,\beta
)=0,\quad k\in \mathbb{N}_{0},\;t>0
\end{equation*}%
subject to the initial condition
\begin{equation*}
p_{k}(0;1,\beta )=\left\{
\begin{array}{ll}
0, & k\geq 1, \\
1, & k=0.%
\end{array}%
\right.
\end{equation*}%
Furthermore, for the hitting time we have that
\begin{equation}
T_{k}^{1,\beta }\stackrel{law}{=}\mathfrak{A}_{T_{k}}^{\beta }  \label{waiting-T-beta}
\end{equation}%
and
\begin{equation*}
Pr\{T_{k}^{1,\beta }\in dt\}/dt=\frac{\beta }{t}\frac{(-\lambda
\partial _{\lambda })^{k}}{(k-1)!}E_{\beta }(-t^{\beta }\lambda ).
\end{equation*}
\end{coro}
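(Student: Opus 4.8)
The plan is to derive everything as the $\alpha=1$ specialization of Theorem 2, checking only the few places where the general argument simplifies. First I would substitute $\alpha=1$ into the compact representation \eqref{probNsub}, obtaining $p_k(t;1,\beta)=\frac{(-\lambda\partial_\lambda)^k}{k!}E_\beta(-t^\beta\lambda)$, and recall from \cite{Mee} (cited just before the corollary) that $\mathfrak{F}_t^{1,\beta}=\mathfrak{A}_{\mathfrak{L}_t^\beta}^1=\mathfrak{L}_t^\beta$, since an $1$-stable subordinator is the deterministic process $t\mapsto t$; hence $N(\mathfrak{F}_t^{1,\beta})=N(\mathfrak{L}_t^\beta)$, the time-fractional Poisson process. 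The governing equation is immediate: setting $\alpha=1$ in \eqref{eq-Pois-frac-time-space} turns $(I-B)^\alpha$ into $(I-B)$ and $\lambda^\alpha$ into $\lambda$, giving $\big(\mathcal{D}_t^\beta+\lambda(I-B)\big)p_k(t;1,\beta)=0$ with the stated initial condition; alternatively one checks directly, using that $E_\beta$ is an eigenfunction of $\mathcal{D}_t^\beta$, that $\mathcal{D}_t^\beta G_\beta^1(u,t)=-\lambda(1-u)G_\beta^1(u,t)$, where $G_\beta^1(u,t)=E_\beta(-t^\beta\lambda(1-u))$, and inverts the generating function.

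Next I would treat the waiting time. From the general theorem, $Pr\{T_k^{\alpha,\beta}\in dt\}/dt=\frac{\beta k}{\alpha t}p_k(t;\alpha,\beta)$, so at $\alpha=1$ this reads $\frac{\beta k}{t}p_k(t;1,\beta)=\frac{\beta}{t}\frac{(-\lambda\partial_\lambda)^k}{(k-1)!}E_\beta(-t^\beta\lambda)$, which is exactly the density claimed. For the distributional identity I would specialize $T_k^{\alpha,\beta}\overset{law}{=}\mathfrak{F}_{T_k}^{\beta,\alpha}$ to $\alpha=1$: the outer composition becomes $\mathfrak{F}_{T_k}^{\beta,1}=\mathfrak{A}_{\mathfrak{L}_{T_k}^1}^\beta$, and since $\mathfrak{L}_t^1$ is the inverse of the deterministic subordinator $t\mapsto t$, we have $\mathfrak{L}_t^1=t$, hence $\mathfrak{F}_{T_k}^{\beta,1}=\mathfrak{A}_{T_k}^\beta$, giving \eqref{waiting-T-beta}. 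As a self-contained check one can verify the Laplace transform: $Pr\{\mathfrak{A}_{T_k}^\beta\in dt\}/dt=\frac{\lambda^k}{(k-1)!}\int_0^\infty z^{k-1}e^{-\lambda z}h_\beta(t,z)\,dz$, and using $\int_0^\infty e^{-\lambda z}h_\beta(t,z)\,dz$ together with the manipulation $\frac{\lambda^k}{(k-1)!}z^{k-1}e^{-\lambda z}=\frac{\lambda}{(k-1)!}(-\lambda\partial_\lambda)^{k-1}e^{-\lambda z}$, one recovers $\frac{\beta}{t}\frac{(-\lambda\partial_\lambda)^k}{(k-1)!}E_\beta(-t^\beta\lambda)$.

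The only genuinely delicate point is the justification that $\mathfrak{F}_t^{\beta,1}=\mathfrak{A}_t^\beta$, i.e. that degenerating the inverse-stable clock to the identity is legitimate; this is the $\alpha\to 1$ boundary case of \eqref{Fproc} and its hitting-time representation \eqref{Fhitting}, and strictly speaking the lemma preceding the corollary is stated for $\alpha,\beta\in(0,1)$. I would handle this by the Laplace-transform route sketched above rather than by a pathwise limit, since all the transform identities \eqref{lap-sub}, \eqref{lap-inv}, \eqref{LapF} remain valid at the endpoint $\alpha=1$ (there $E_1(-z)=e^{-z}$, $\mathbb{E}e^{-\xi\mathfrak{F}_t^{1,\beta}}=E_\beta(-t^\beta\xi)$ matching \eqref{lap-inv}). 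Everything else — the eigenfunction property of $E_\beta$, the inversion of the generating function, the $\partial_\lambda^m\lambda$ identity — is already established in the proof of Theorem 2 and carries over verbatim, so the corollary is essentially a matter of bookkeeping once the endpoint identification is secured.
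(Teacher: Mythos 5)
Your proposal is correct and matches the paper's intent exactly: the paper offers no separate proof of this corollary, presenting it as the immediate $\alpha=1$ specialization of Theorem 2, which is precisely what you carry out. Your extra care at the endpoint — identifying $\mathfrak{A}^{1}$ with the identity so that $\mathfrak{F}_t^{1,\beta}=\mathfrak{L}_t^\beta$ and $\mathfrak{F}_{T_k}^{\beta,1}=\mathfrak{A}_{T_k}^\beta$, and verifying via Laplace transforms that the hitting-time lemma survives at $\alpha=1$ — is a sound way to close the only gap the paper leaves implicit.
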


Moreover, for the subordinated Poisson process $N(\mathfrak{F}_{t}^{1,\beta
})$, we can prove the following result on its hitting time %
\eqref{waiting-T-beta}.

\begin{te}
The following holds
\begin{equation*}
\sum_{k=0}^{N_{t}}T_{k}^{1,\beta }=\sum_{k=0}^{N_{t}}\mathfrak{A}%
_{T_{k}}^{\beta }=\int_{0}^{t}\mathfrak{A}_{s}^{\beta }\,dN_{s}\overset{law}{%
=}\mathfrak{A}_{\mathfrak{T}_{t}}^{\beta }
\end{equation*}%
where
\begin{equation*}
\mathfrak{T}_{t}=\int_{0}^{t}s\,dN_{s}
\end{equation*}
\end{te}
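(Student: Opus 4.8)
The plan is to read off the first two equalities from soft arguments — the inversion relation between $\mathfrak{A}^{\beta}$ and $\mathfrak{L}^{\beta}$, and the meaning of the Stieltjes integral against $dN$ — and then to reduce the distributional identity to a single Laplace‑transform computation by conditioning on the Poisson process. Realise all the ingredients on one probability space with $\mathfrak{A}^{\beta}$ independent of $N$. Exactly as in the Corollary, for each $k$ one has $\{N(\mathfrak{L}^{\beta}_{s})>k\}=\{\mathfrak{L}^{\beta}_{s}\geq T_{k}\}=\{\mathfrak{A}^{\beta}_{T_{k}-}\leq s\}$, where $T_{k}$ is the $k$‑th waiting time of $N$; since $T_{k}$ is independent of $\mathfrak{A}^{\beta}$ and has an absolutely continuous law, the subordinator a.s.\ does not jump at $T_{k}$, so that $T^{1,\beta}_{k}=\inf\{s\geq 0:\mathfrak{L}^{\beta}_{s}\geq T_{k}\}=\mathfrak{A}^{\beta}_{T_{k}}$ almost surely. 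Summing over $k$ gives the first equality. For the second, $s\mapsto N_{s}$ is a right‑continuous step function on $[0,t]$ with unit jumps at the arrival times $T_{k}\leq t$, so in the Lebesgue--Stieltjes sense $\int_{0}^{t}\mathfrak{A}^{\beta}_{s}\,dN_{s}=\sum_{k:\,T_{k}\leq t}\mathfrak{A}^{\beta}_{T_{k}}$ (with the bookkeeping convention $T_{0}=0$ the $k=0$ term contributes $\mathfrak{A}^{\beta}_{0}=0$), which is the middle expression.

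For the law equality I would condition on the path of $\mathfrak{A}^{\beta}$. Because $\mathfrak{A}^{\beta}\perp N$, the random variable $\sum_{T_{k}\leq t}\mathfrak{A}^{\beta}_{T_{k}}$ is a linear functional of a rate‑$\lambda$ Poisson process on $[0,t]$, so the exponential formula for Poisson functionals (Campbell's theorem) gives
\[
\mathbb{E}\!\left[e^{-\xi\int_{0}^{t}\mathfrak{A}^{\beta}_{s}\,dN_{s}}\,\Big|\,\mathfrak{A}^{\beta}\right]=\exp\!\left(-\lambda\int_{0}^{t}\big(1-e^{-\xi\mathfrak{A}^{\beta}_{s}}\big)\,ds\right).
\]
On the other side $\mathfrak{T}_{t}=\int_{0}^{t}s\,dN_{s}=\sum_{T_{k}\leq t}T_{k}$ is $N$‑measurable, hence independent of $\mathfrak{A}^{\beta}$, so using $\mathbb{E}e^{-\xi\mathfrak{A}^{\beta}_{u}}=e^{-u\xi^{\beta}}$ and the exponential formula once more,
\[
\mathbb{E}\,e^{-\xi\mathfrak{A}^{\beta}_{\mathfrak{T}_{t}}}=\mathbb{E}\,e^{-\xi^{\beta}\mathfrak{T}_{t}}=\exp\!\left(-\lambda\int_{0}^{t}\big(1-e^{-\xi^{\beta}s}\big)\,ds\right).
\]
Comparing the two displays, the claim is equivalent to the identity
\[
\mathbb{E}_{\mathfrak{A}^{\beta}}\exp\!\left(-\lambda\int_{0}^{t}\big(1-e^{-\xi\mathfrak{A}^{\beta}_{s}}\big)\,ds\right)=\exp\!\left(-\lambda\int_{0}^{t}\big(1-e^{-\xi^{\beta}s}\big)\,ds\right),
\]
the matching of the full family of joint Laplace transforms following along the same lines once this one is in place.

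When $\beta=1$ the identity is immediate, since $\mathfrak{A}^{1}_{s}\equiv s$ makes the inner integrand deterministic; equivalently, conditioning on $N$ and using Abel summation, $\sum_{k}\mathfrak{A}^{1}_{T_{k}}=\sum_{j}(N_{t}+1-j)(T_{j}-T_{j-1})=\sum_{k}T_{k}=\mathfrak{T}_{t}$ already pathwise. For $\beta\in(0,1)$ I would instead condition on $N_{t}=n$ and on the ordered arrivals $0=T_{0}<T_{1}<\dots<T_{n}\leq t$, decompose $\sum_{k=1}^{n}\mathfrak{A}^{\beta}_{T_{k}}=\sum_{j=1}^{n}(n+1-j)\big(\mathfrak{A}^{\beta}_{T_{j}}-\mathfrak{A}^{\beta}_{T_{j-1}}\big)$ into independent stable increments and, by the self‑similarity $c\,\mathfrak{A}^{\beta}_{u}\overset{d}{=}\mathfrak{A}^{\beta}_{c^{\beta}u}$, rewrite it as $\mathfrak{A}^{\beta}$ evaluated at the clock $\sum_{j=1}^{n}(n+1-j)^{\beta}(T_{j}-T_{j-1})$; one then has to identify the law of this clock with that of $\mathfrak{T}_{t}=\sum_{j}(n+1-j)(T_{j}-T_{j-1})$ given $N_{t}=n$, exploiting that the interarrival spacings are exchangeable (distributed like $t$ times a uniform point of the simplex). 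This last step — reconciling the coefficient pattern $(n^{\beta},\dots,1,0)$ with $(n,\dots,1,0)$, i.e.\ justifying that $\mathbb{E}_{\mathfrak{A}^{\beta}}$ may be exchanged with the outer exponential — is the delicate point and the main obstacle, and is precisely where the self‑similarity of the stable subordinator must be used in an essential way.
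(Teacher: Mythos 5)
Your reduction is set up correctly, but the proof is not complete: the identity you isolate at the end,
\begin{equation*}
\mathbb{E}_{\mathfrak{A}^{\beta}}\exp\Big(-\lambda\int_{0}^{t}\big(1-e^{-\xi\mathfrak{A}^{\beta}_{s}}\big)\,ds\Big)=\exp\Big(-\lambda\int_{0}^{t}\big(1-e^{-\xi^{\beta}s}\big)\,ds\Big),
\end{equation*}
is the entire content of the theorem and you do not prove it. Worse, under the reading you have adopted --- a single path of $\mathfrak{A}^{\beta}$ evaluated at all the arrival times --- it is false for $\beta\in(0,1)$: by Jensen's inequality $\mathbb{E}\exp\big(-\lambda\int_{0}^{t}(1-e^{-\xi\mathfrak{A}^{\beta}_{s}})ds\big)>\exp\big(-\lambda\int_{0}^{t}(1-\mathbb{E}e^{-\xi\mathfrak{A}^{\beta}_{s}})ds\big)=\exp\big(-\lambda\int_{0}^{t}(1-e^{-\xi^{\beta}s})ds\big)$ strictly, since the integrand is genuinely random. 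Your own coefficient computation says the same thing: given $N_{t}=n$ and the arrivals, the left-hand side produces the clock $\sum_{j}(n+1-j)^{\beta}(T_{j}-T_{j-1})$ while the right-hand side produces $\sum_{j}(n+1-j)(T_{j}-T_{j-1})$, and already for $n=2$ no use of exchangeability or self-similarity will reconcile $2^{\beta}$ with $2$. So the ``delicate point'' you flag is not a missing technical step; it is an actual obstruction under your interpretation.

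The paper's proof conditions the other way, on the arrival times, and factorizes $\mathbb{E}\big[\prod_{T_{k}<t}e^{-\xi\mathfrak{A}^{\beta}_{T_{k}}}\mid\{T_{k}\}\big]=\prod_{T_{k}<t}\mathbb{E}\big[e^{-\xi\mathfrak{A}^{\beta}_{T_{k}}}\mid T_{k}\big]=\prod_{T_{k}<t}e^{-\xi^{\beta}T_{k}}$, which is then trivially $\mathbb{E}\big[e^{-\xi\mathfrak{A}^{\beta}_{\mathfrak{T}_{t}}}\mid\{T_{k}\}\big]$. That factorization is legitimate only if the summands are conditionally independent given the arrival times, i.e.\ only if each $\mathfrak{A}^{\beta}_{T_{k}}$ carries an independent copy of the subordinator --- consistent with reading $\sum_{k}T^{1,\beta}_{k}$ as a sum of waiting times, each satisfying $T^{1,\beta}_{k}\overset{law}{=}\mathfrak{A}^{\beta}_{T_{k}}$ --- and it fails for a single path, whose values at $T_{1}<T_{2}<\dots$ are ordered and hence dependent. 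Under the independent-copies interpretation the theorem is a one-line conditional computation and your obstacle disappears; under the single-path interpretation suggested by the notation $\int_{0}^{t}\mathfrak{A}^{\beta}_{s}\,dN_{s}$ the law equality does not hold for $\beta<1$. To salvage the argument you should adopt the independent-copies reading and condition on $N$ rather than on $\mathfrak{A}^{\beta}$.
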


\begin{proof}
From the fact that
\begin{equation*}
 \sum_{N_t > k} \mathfrak{A}^\beta_{T_k} =  \sum_{T_k < t} \mathfrak{A}^\beta_{T_k}
\end{equation*}
we can write
\begin{align*}
\mathbb{E} \exp\left( - \xi \sum_{T_k < t} \mathfrak{A}^\beta_{T_k} \right) = & \mathbb{E}\Bigg[ \prod_{T_k < t} \mathbb{E} \left[\exp\left( - \xi \mathfrak{A}^\beta_{T_k} \right) \Big| T_k \right] \Bigg]\\
= & \mathbb{E}\left[ \prod_{T_k < t} \exp\left( - \xi^\beta T_k \right)\right] \\
= & \mathbb{E} \exp\left( - \xi^\beta \sum_{T_k < t} T_k \right)\\
= & \mathbb{E} \exp\left(-\xi \mathfrak{A}^\beta_{\mathfrak{T}_t} \right)
\end{align*}
where
\begin{equation*}
\mathfrak{T}_t = \sum_{T_k < t} T_k = \int_0^t s\, dN_s
\end{equation*}
and this concludes the proof.
\end{proof}

 In the other special case, i.e. for $\beta =1$, the time-changed Poisson process $N(%
\mathfrak{F}_{t}^{\alpha ,\beta})$ reduces to the space-fractional Poisson process studied in \cite{ors-pol-SPL}.

\begin{coro}
For $\beta =1,$ the probability law of the time-changed Poisson process $N(%
\mathfrak{F}_{t}^{\alpha ,1})=N(\mathfrak{A}_{t}^{\alpha }),$ $t>0,\;\alpha
\in (0,1]$, i.e.
\begin{equation}
p_{k}(t;\alpha ,1)=P\{N(\mathfrak{A}_{t}^{\alpha })=k\}=\frac{(-\lambda
\partial _{\lambda })^{k}}{k!}e^{-t\lambda ^{\alpha }}  \label{sf2}
\end{equation}%
satisfies the following equation
\begin{equation*}
\left( \frac{d}{dt}+\lambda^\alpha (I-B)^{\alpha }\right) p_{k}(t;\alpha
,1)=0,\quad k=0,1,2\ldots ,\;t>0
\end{equation*}%
subject to the initial condition
\begin{equation*}
p_{k}(0;\alpha ,1)=\left\{
\begin{array}{ll}
0, & k\geq 1, \\
1, & k=0.%
\end{array}%
\right.
\end{equation*}%
Furthermore, the hitting time can be written as
\begin{equation*}
T_{k}^{\alpha ,1}=\mathfrak{L}_{T_{k}}^{\alpha }
\end{equation*}%
and
\begin{equation*}
Pr\{T_{k}^{\alpha ,1}\in dt\}/dt=\frac{1}{\alpha t}\frac{(-\lambda
\partial _{\lambda })^{k}}{(k-1)!}e^{-t\lambda ^{\alpha }}.
\end{equation*}
\end{coro}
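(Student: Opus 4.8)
The plan is to read off every assertion from Theorem 2 by setting $\beta=1$, so the only real content is to make the three elementary reductions that this specialization requires. First, from \eqref{lap-sub} we have $\mathbb{E}e^{-\xi\mathfrak{A}_t^1}=e^{-t\xi}$, so $\mathfrak{A}_t^1=t$ (and hence $\mathfrak{L}_t^1=t$) almost surely; by \eqref{Fproc} this gives $\mathfrak{F}_t^{\alpha,1}=\mathfrak{A}_{\mathfrak{L}_t^1}^\alpha=\mathfrak{A}_t^\alpha$ and $\mathfrak{F}_t^{1,\alpha}=\mathfrak{A}_{\mathfrak{L}_t^\alpha}^1=\mathfrak{L}_t^\alpha$. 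Second, $E_1(z)=e^z$. Third, in $\mathcal{D}_t^\beta u=\partial_t^\beta u-u(0^+)t^{-\beta}/\Gamma(1-\beta)$ the correction term vanishes as $\beta\uparrow 1$ because $\Gamma(1-\beta)\to\infty$, so $\mathcal{D}_t^1=d/dt$.

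Granting these, the probability law \eqref{sf2} is just \eqref{probNsub} with $\beta=1$, namely $p_k(t;\alpha,1)=\frac{(-\lambda\partial_\lambda)^k}{k!}E_1(-t\lambda^\alpha)=\frac{(-\lambda\partial_\lambda)^k}{k!}e^{-t\lambda^\alpha}$, and it is the law of $N(\mathfrak{A}_t^\alpha)$ by the first reduction. The governing equation and the initial condition are \eqref{eq-Pois-frac-time-space} read with $\mathcal{D}_t^1=d/dt$; to double-check directly one uses $\frac{d}{dt}e^{-t\lambda^\alpha}=-\lambda^\alpha e^{-t\lambda^\alpha}$ together with the operator identity $\widetilde{(I-B)^\alpha f}(-\log u)=(1-u)^\alpha\widetilde{f}(-\log u)$ established in the proof of Theorem 2. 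The waiting-time density is likewise the $\beta=1$ case of the formula in Theorem 2: $Pr\{T_k^{\alpha,1}\in dt\}/dt=\frac{k}{\alpha t}\,p_k(t;\alpha,1)=\frac{1}{\alpha t}\frac{(-\lambda\partial_\lambda)^k}{(k-1)!}e^{-t\lambda^\alpha}$.

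For the hitting time, Theorem 2 already yields $T_k^{\alpha,1}\overset{law}{=}\mathfrak{F}_{T_k}^{1,\alpha}=\mathfrak{L}_{T_k}^\alpha$; I would additionally note that on the underlying probability space this is in fact a pathwise identity, since $\{u\ge 0:N(u)>k\}=[T_k,\infty)$ forces $N(\mathfrak{A}_s^\alpha)>k\iff\mathfrak{A}_s^\alpha\ge T_k$, whence $T_k^{\alpha,1}=\inf\{s\ge 0:\mathfrak{A}_s^\alpha\ge T_k\}=\mathfrak{L}_{T_k}^\alpha$ (using $T_k>0$ a.s.). The one step that deserves a sentence rather than a nod is the identification $\mathcal{D}_t^1=d/dt$, i.e. that the boundary term in the Dzhrbashyan-Caputo derivative genuinely drops out in the limit $\beta\to 1$; everything else is substitution into Theorem 2.
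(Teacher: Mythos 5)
Your proposal is correct and follows exactly the route the paper intends: the corollary is stated without a separate proof precisely because it is the $\beta=1$ specialization of Theorem 2, using $\mathfrak{A}^1_t=t$, $E_1(z)=e^z$, and $\mathcal{D}^1_t=d/dt$ as you do. Your extra pathwise argument for $T_k^{\alpha,1}=\mathfrak{L}^\alpha_{T_k}$ is a welcome refinement, since it justifies the corollary's statement of this identity without the qualifier $\overset{law}{=}$ that appears in Theorem 2.
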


We prove now that the process $N(\mathfrak{A}_{t}^{\alpha })$ is governed
also by an alternative fractional differential equation, of order $1/\alpha
>1$. In this case we use the left-sided Riemann-Liouville fractional
derivative, defined as%
\begin{equation*}
\frac{d^{\nu }}{d(-x)^{\nu }}f(x)=\frac{1}{\Gamma (m-\nu )}\left( -\frac{d}{%
dx}\right) ^{m}\int_{x}^{+\infty }\frac{f(s)ds}{(s-x)^{1+\nu -m}}\quad
m-1<\nu <m.
\end{equation*}

\begin{te}
The distribution $p_{k}(t;\alpha ,1)$ of $N(\mathfrak{A}_{t}^{\alpha })$
is the solution to the following equation%
\begin{equation}
\left( \frac{d^{\frac{1}{\alpha }}}{d(-t)^{\frac{1}{\alpha }}}-\lambda
(I-B)\right) p_{k}(t;\alpha ,1)=0,\quad k=0,1,2\ldots ,\;t>0,  \quad \alpha \in (0,1] \label{ma}
\end{equation}%
subject to the initial conditions
\begin{equation*}
p_{k}(0;\alpha ,1)=\left\{
\begin{array}{ll}
0, & k\geq 1, \\
1, & k=0.%
\end{array}%
\right.
\end{equation*}%
and
\begin{equation}
\left. \frac{d^{j}}{dt^{j}}p_{k}(t;\alpha ,1)\right\vert _{t=0}=(-1)^{k}%
\frac{\lambda ^{\alpha j}}{k!}\frac{\Gamma (\alpha j+1)}{\Gamma (\alpha
j+1-k)},\quad j=1,...\left\lfloor 1/\alpha \right\rfloor -1.  \label{con2}
\end{equation}
\end{te}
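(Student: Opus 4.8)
The plan is to verify directly that $p_{k}(t;\alpha,1)$ solves \eqref{ma} and meets the prescribed Cauchy data, the only nontrivial ingredient being the action of the left-sided Riemann--Liouville derivative on a pure exponential. The first step is the eigenfunction identity
\[
\frac{d^{\nu}}{d(-t)^{\nu}}\,e^{-\mu t}=\mu^{\nu}\,e^{-\mu t},\qquad \mu>0,\quad m-1<\nu<m,
\]
obtained from the defining formula by the substitution $s=t+r$ in the inner integral $\int_{t}^{\infty}e^{-\mu s}(s-t)^{m-1-\nu}\,ds$, the Euler identity $\int_{0}^{\infty}r^{(m-\nu)-1}e^{-\mu r}\,dr=\Gamma(m-\nu)\mu^{\nu-m}$, and $\big(-\tfrac{d}{dt}\big)^{m}e^{-\mu t}=\mu^{m}e^{-\mu t}$; convergence at $r=0$ needs $\nu<m$ and at $r=\infty$ needs $\mu>0$, and here $\nu=1/\alpha\in(m-1,m)$ with $m=\lceil 1/\alpha\rceil$. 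Taking $\mu=\lambda^{\alpha}$ gives $\frac{d^{1/\alpha}}{d(-t)^{1/\alpha}}e^{-t\lambda^{\alpha}}=\lambda\,e^{-t\lambda^{\alpha}}$.

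Starting from the representation $p_{k}(t;\alpha,1)=\frac{(-\lambda\partial_{\lambda})^{k}}{k!}e^{-t\lambda^{\alpha}}$, equivalently $\frac{(-\lambda)^{k}}{k!}\partial_{\lambda}^{k}e^{-t\lambda^{\alpha}}$, established above, I would then commute the nonlocal (in $t$) fractional operator with the $\lambda$-differentiations — legitimate since $e^{-t\lambda^{\alpha}}$ and all its $\lambda$-derivatives are smooth in $(\lambda,t)$ and decay exponentially in $t$ uniformly for $\lambda$ in compact subintervals of $(0,\infty)$, so that differentiation under the integral sign and the interchange of $\partial_{\lambda}$ with $\big(-\tfrac{d}{dt}\big)^{m}$ are justified by dominated convergence. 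This gives
\[
\frac{d^{1/\alpha}}{d(-t)^{1/\alpha}}\,p_{k}(t;\alpha,1)=\frac{(-\lambda)^{k}}{k!}\,\partial_{\lambda}^{k}\big(\lambda\,e^{-t\lambda^{\alpha}}\big).
\]
By the Leibniz rule $\partial_{\lambda}^{k}(\lambda g)=\lambda\,\partial_{\lambda}^{k}g+k\,\partial_{\lambda}^{k-1}g$ (higher terms vanish), and since $\frac{(-\lambda)^{k}}{k!}\partial_{\lambda}^{k}e^{-t\lambda^{\alpha}}=p_{k}$ and $\frac{k}{k!}(-\lambda)^{k}\partial_{\lambda}^{k-1}e^{-t\lambda^{\alpha}}=-\lambda\,p_{k-1}$, the right-hand side collapses to $\lambda\,p_{k}(t;\alpha,1)-\lambda\,p_{k-1}(t;\alpha,1)=\lambda(I-B)p_{k}(t;\alpha,1)$, which is \eqref{ma}. (Alternatively, the same step applied to the generating function $G_{1}^{\alpha}(u,t)=e^{-t(\lambda(1-u))^{\alpha}}$, an exponential with exponent $(\lambda(1-u))^{\alpha}$, gives $\frac{d^{1/\alpha}}{d(-t)^{1/\alpha}}G_{1}^{\alpha}=\lambda(1-u)G_{1}^{\alpha}$, whence \eqref{ma} on extracting the coefficient of $u^{k}$.)

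For the Cauchy data: $p_{k}(0;\alpha,1)=\frac{(-\lambda)^{k}}{k!}\partial_{\lambda}^{k}1=\delta_{k,0}$, while $\frac{d^{j}}{dt^{j}}e^{-t\lambda^{\alpha}}$ evaluated at $t=0$ equals $(-\lambda^{\alpha})^{j}$, and $\partial_{\lambda}^{k}\lambda^{\alpha j}=\tfrac{\Gamma(\alpha j+1)}{\Gamma(\alpha j+1-k)}\lambda^{\alpha j-k}$, which together reproduce the values in \eqref{con2} (with the $j$-fold derivative there read consistently with the left-sided convention of \eqref{ma}). These conditions, for $j=0,\dots,\lfloor 1/\alpha\rfloor-1$, give $\lfloor 1/\alpha\rfloor$ data at $t=0$, one fewer than the order $\nu=1/\alpha\in(m-1,m)$ of \eqref{ma} would by itself require; the missing datum is the requirement that $p_{k}(\cdot;\alpha,1)$ be bounded on $[0,\infty)$ — natural, since it is a probability — which together with the Cauchy conditions singles it out among the solutions of \eqref{ma}.

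I expect this last point, well-posedness/uniqueness, to be the real obstacle rather than the verification. Since $\frac{d^{1/\alpha}}{d(-t)^{1/\alpha}}$ is nonlocal towards $+\infty$, the Laplace-transform-in-$t$ technique used elsewhere in the paper is not directly available: the homogeneous problem $\frac{d^{\nu}}{d(-t)^{\nu}}y=\mu y$ admits, besides $Ce^{-\mu^{1/\nu}t}$, further exponential-type solutions that are unbounded or oscillatory and must be discarded once boundedness is imposed. The clean way to close the argument is therefore to note that $t\mapsto p_{k}(t;\alpha,1)$ extends to an entire function of $t$ (evident from the series \eqref{ors-pol-sol}) and that, within the class of bounded solutions, \eqref{ma} together with the finitely many Cauchy conditions determines every Taylor coefficient at $t=0$, hence the solution uniquely. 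All remaining computations are routine.
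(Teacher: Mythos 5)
Your verification is correct, and it takes a genuinely more direct route than the paper. The paper first proves an auxiliary result of independent interest, namely that the stable density $h_{\alpha}(x,t)$ solves the higher\--order fractional equation \eqref{dov} with side conditions \eqref{dov-2} (generalizing a theorem of D'Ovidio), and then obtains \eqref{ma} by writing $p_{k}(t;\alpha,1)=\int_{0}^{\infty}p_{k}(z)h_{\alpha}(z,t)\,dz$, transferring the fractional $t$-derivative to $\partial_{z}h_{\alpha}$ via \eqref{dov}, and integrating by parts against the Poisson ODE for $p_{k}(z)$. You bypass the subordination picture entirely: the same eigenfunction identity $\frac{d^{1/\alpha}}{d(-t)^{1/\alpha}}e^{-t\lambda^{\alpha}}=\lambda e^{-t\lambda^{\alpha}}$ (which the paper imports from Kilbas et al.\ and you derive in two lines) applied to the closed form \eqref{sf2}, followed by the Leibniz rule $\partial_{\lambda}^{k}(\lambda g)=\lambda\partial_{\lambda}^{k}g+k\partial_{\lambda}^{k-1}g$, gives $\lambda(I-B)p_{k}$ directly. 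What the paper's detour buys is the equation for $h_{\alpha}$ itself and the explicit form of its initial data $\Phi_{\alpha j+1}$; what your route buys is brevity and the avoidance of the distributional boundary terms implicit in the integration by parts. Your generating-function variant is the same computation in yet another guise. Your remark on well-posedness (the count of Cauchy data falls one short of $\lceil 1/\alpha\rceil$, and the operator is nonlocal towards $+\infty$) addresses a point the paper does not touch at all — its proof is pure verification — so you are not missing anything the paper supplies, though your sketch of uniqueness via Taylor coefficients is itself only a sketch.

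One further point in your favor: the sign discrepancy you flag in \eqref{con2} is real. A direct computation from \eqref{sf2} gives
\[
\left.\frac{d^{j}}{dt^{j}}p_{k}(t;\alpha,1)\right|_{t=0}=(-1)^{k+j}\,\frac{\lambda^{\alpha j}}{k!}\,\frac{\Gamma(\alpha j+1)}{\Gamma(\alpha j+1-k)}
\]
(e.g.\ $k=0$, $j=1$ yields $-\lambda^{\alpha}$, whereas \eqref{con2} yields $+\lambda^{\alpha}$). The paper's own derivation of \eqref{con2} silently drops the factor $(-1)^{j}$ coming from the third condition in \eqref{dov-2}, so the statement is correct only if $\tfrac{d^{j}}{dt^{j}}$ there is read as $\tfrac{d^{j}}{d(-t)^{j}}$, exactly the convention you propose.
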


\begin{proof} We start by proving that the density $h_{\alpha }(x,t)$ of the
subordinator $\mathfrak{A}_{t}^{\alpha }$ satisfies the fractional
differential equation (of order $1/\alpha $ greater than one)%
\begin{equation}
\left( \frac{\partial ^{1/\alpha }}{\partial (-t)^{1/\alpha }}-\frac{%
\partial }{\partial x}\right) h_{\alpha }(x,t)=0,\quad x,t\geq 0,
\label{dov}
\end{equation}%
with initial conditions%
\begin{equation}
\left\{
\begin{array}{l}
h_{\alpha }(x,0)=\delta (x),\quad x\geq 0 \\
h_{\alpha }(0,t)=0,\quad t\geq 0 \\
\left. \frac{\partial ^{j}}{\partial t^{j}}h_{\alpha }(x,t)\right\vert
_{t=0}=(-1)^{j}\Phi _{\alpha j+1}(x),\quad j=1,...,\left\lfloor 1/\alpha
\right\rfloor -1%
\end{array}%
\right.   \label{dov-2}
\end{equation}%
where $\Phi _{\alpha j+1}(z)=\frac{z^{-\alpha j-1}}{\Gamma (-\alpha j)},$
for $z>0.$ For the Laplace transform of (\ref{dov}), we get%
\begin{eqnarray*}
&&\int_{0}^{\infty }e^{-\xi x}\frac{\partial ^{1/\alpha }}{\partial
(-t)^{1/\alpha }}h_{\alpha }(x,t)dx = [\text{by \eqref{lap-sub}}]\\
&=&\frac{\partial ^{1/\alpha }}{\partial (-t)^{1/\alpha }}e^{-\xi ^{\alpha
}t}=[\text{by (2.2.15) in \cite{Kilb}}] \\
&=&\xi e^{-\xi ^{\alpha }t}=[\text{by (\ref{dov-2})}]=\int_{0}^{\infty
}e^{-\xi x}\frac{\partial }{\partial x}h_{\alpha }(x,t)dx.
\end{eqnarray*}%
The third condition in (\ref{dov-2}) can be checked by noting that%
\begin{eqnarray*}
&&\int_{0}^{\infty }e^{-\xi x}\left. \frac{\partial ^{j}}{\partial t^{j}}%
h_{\alpha }(x,t)\right\vert _{t=0}dx \\
&=&\left. \frac{\partial ^{j}}{\partial t^{j}}e^{-\xi ^{\alpha
}t}\right\vert _{t=0}=(-1)^{j}\xi ^{\alpha j} \\
&=&(-1)^{j}\int_{0}^{\infty }e^{-\xi x}\frac{x^{-\alpha j-1}}{\Gamma
(-\alpha j)}dx=(-1)^{j}\int_{0}^{\infty }e^{-\xi x}\Phi _{\alpha j+1}(x)dx,
\end{eqnarray*}%
while the others are immediately verified. The result given in (\ref{dov})-(%
\ref{dov-2}) generalizes, to any $\alpha \in (0,1)$, Theorem 2 in \cite{Dov},
which has been proved in the special case $\alpha =1/n.$

We now prove equation (\ref{ma}):
\begin{eqnarray*}
&&\frac{d^{\frac{1}{\alpha }}}{d(-t)^{\frac{1}{\alpha }}}p_{k}(t;\alpha ,1)
\\
&=&\frac{\partial ^{\frac{1}{\alpha }}}{\partial (-t)^{\frac{1}{\alpha }}}%
\int_{0}^{\infty }p_{k}(z)h_{\alpha }(z,t)dz=[\text{by (\ref{dov})]} \\
&=&\int_{0}^{\infty }p_{k}(z)\frac{\partial }{\partial z}h_{\alpha }(z,t)dz=[%
\text{by (\ref{dov-2}]} \\
&=&-\int_{0}^{\infty }\frac{d}{dz}p_{k}(z)h_{\alpha }(z,t)dz=\lambda
(I-B)p_{k}(t;\alpha ,1).
\end{eqnarray*}%
Condition (\ref{con2}) is obtained as follows%
\begin{eqnarray*}
&&\left. \frac{d^{j}}{dt^{j}}p_{k}(t;\alpha ,1)\right\vert _{t=0} \\
&=&\int_{0}^{\infty }p_{k}(z)\left. \frac{d^{j}}{dt^{j}}h_{1/\alpha
}(z,t)\right\vert _{t=0}dz=[\text{by (\ref{dov-2})]} \\
&=&\frac{\lambda ^{k}}{k!}\int_{0}^{\infty }e^{-\lambda z}\frac{z^{k-\alpha
j-1}}{\Gamma (-\alpha j)}dz=\frac{\lambda ^{\alpha j}}{k!}\frac{\Gamma
(-\alpha j+k)}{\Gamma (-\alpha j)} \\
&=&\frac{\lambda ^{\alpha j}}{k!}\frac{\Gamma (1+\alpha j)}{\Gamma (1+\alpha
j-k)}\frac{\sin (-\pi \alpha j)}{\sin (-\pi \alpha j+\pi k)} \\
&=&(-1)^{k}\frac{\lambda ^{\alpha j}}{k!}\frac{\Gamma (1+\alpha j)}{\Gamma
(1+\alpha j-k)},\quad j=1,...,\left\lfloor 1/\alpha \right\rfloor -1
\end{eqnarray*}%
and it is satisfied by $p_{k}(t;\alpha ,1)$, as can be checked by
differentiating formula (\ref{ors-pol-sol}). The other conditions
are immediately verified.
\end{proof}

Let us introduce the following differential operator
\begin{equation}
\label{dif-op-X}
\mathbb{D}_\psi\, f(x) = \int_0^\infty (P_s f(x) - f(x))M(ds)
\end{equation}
where $P_s$ is the semigroup (of a L\'{e}vy process previously introduced) associated to the infinitesimal generator $\mathcal{A}$ and $M(\cdot)$ is the L\'{e}vy measure of the subordinator $\mathfrak{D}_t$, $t>0$ with symbol $\psi$. From \eqref{symb-sub-eta}, we immediately get that
\begin{equation}
\label{symb-D-psi}
\widehat{\mathbb{D}_\psi\, f}(\xi) = \int_0^\infty (e^{-s \Psi(\xi)} \widehat{f}(\xi) - \widehat{f}(\xi))M(ds) = \psi(\Psi(\xi)) \widehat{f}(\xi).
\end{equation}
Indeed, $e^{-t\Psi}$ is the symbol of $P_t=e^{t\mathcal{A}}$. An alternative form can be given as follows
\begin{align*}
\mathbb{D}_\psi\, f(x) = & \int_0^\infty (P_s f(x) - f(x))M(ds)\\
= & \int_0^\infty (\mathbb{E} f(X^x_s) - f(x))M(ds)\\
= & \int_0^\infty \int_{\mathbb{R}}(f(y) - f(x)) Pr\{ X^x_s \in dy \} M(ds)\\
= &  \int_{\mathbb{R}}(f(y) - f(x)) J(x,y)dy
\end{align*}
when the integral $J(x,y)dy = \int_0^\infty Pr\{ X^x_s \in dy \} M(ds)$ converges. We now consider the differential operator \eqref{dif-op-X} where $P_s$ is the semigroup of the Poisson process $N(t)$, $t>0$.

Our aim in the present paper is to investigate the time-changed Poisson processes presented so far. Nevertheless, we are able to state the following general result which refers to the case where a general subordinator $\mathfrak{D}_t$, $t>0$, is used as time argument.

\begin{te}
Let $\mathfrak{D}_t$, $t\geq 0$ be a subordinator with symbol \eqref{symb-sub-eta}. Let $\mathfrak{L}^\beta_t$, $t \geq 0$ be the inverse to a stable subordinator of order $\beta \in (0,1]$. The time-changed Poisson process
\begin{equation}
N(\mathfrak{D}_{\mathfrak{L}^\beta_t}), \quad t\geq 0
\end{equation}
has probability distribution function
\begin{equation}
\label{law-NDL}
p^\psi_k(t;\beta) = \frac{(-\lambda \partial_\lambda)^k}{k!} E_\beta(-t^\beta \psi(\lambda)), \quad k=0,1,2, \ldots .
\end{equation}
Furthermore, \eqref{law-NDL} is the solution to
\begin{equation}
\left( \mathcal{D}^\beta_t - \mathbb{D}_\psi \right) p^\psi_k(t;\beta) =0, \quad t>0, \; k=0,1,2, \ldots
\end{equation}
subject to the initial condition
\begin{equation*}
p_k^\psi(0; \beta) = \left\lbrace \begin{array}{ll}
1, & k=0,\\
0, & k\geq 1
\end{array} \right .
\end{equation*}
where
\begin{equation}
\mathbb{D}_\psi u_k= - \int_0^\infty  \mathbb{E} \left( u_k - B^{N(s)} u_{k} \right) M(ds) = -\psi\left(\lambda(I-B)\right)u_k. \label{op-DNL-integral}
\end{equation}
and $Bu_k=u_{k-1}$ as usual.
\end{te}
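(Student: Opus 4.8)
The plan is to transfer everything through the scalar Laplace transform of the random time argument and the probability generating function, exactly in the spirit of the proof of Theorem~2; throughout I read $\psi$ as the Laplace exponent of $\mathfrak{D}$, i.e.\ $\mathbb{E}e^{-\xi\mathfrak{D}_{t}}=e^{-t\psi(\xi)}$ with $\psi(\xi)=b\xi+\int_{0}^{\infty}(1-e^{-s\xi})M(ds)$, which is the reading of \eqref{symb-sub-eta} making the statement consistent.

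First I would record the two elementary identities valid for any non-negative random variable $S$ independent of $N$: since $Pr\{N(S)=k\mid S\}=(\lambda S)^{k}e^{-\lambda S}/k!$, one has, exactly as in \eqref{probNsub}, that $Pr\{N(S)=k\}=\frac{(-\lambda\partial_{\lambda})^{k}}{k!}\mathbb{E}e^{-\lambda S}$ and $\mathbb{E}u^{N(S)}=\mathbb{E}e^{-\lambda(1-u)S}$. Taking $S=\mathfrak{D}_{\mathfrak{L}^{\beta}_{t}}$, conditioning on $\mathfrak{L}^{\beta}_{t}$ and using the independence of $\mathfrak{D}$ and $\mathfrak{L}^{\beta}$ together with $\mathbb{E}e^{-\lambda\mathfrak{D}_{s}}=e^{-s\psi(\lambda)}$ and \eqref{lap-inv}, we obtain
\[
\mathbb{E}e^{-\lambda\mathfrak{D}_{\mathfrak{L}^{\beta}_{t}}}=\int_{0}^{\infty}e^{-s\psi(\lambda)}\,l_{\beta}(s,t)\,ds=\mathbb{E}e^{-\psi(\lambda)\mathfrak{L}^{\beta}_{t}}=E_{\beta}(-t^{\beta}\psi(\lambda)).
\]
Then \eqref{law-NDL} follows at once, and replacing $\lambda$ by $\lambda(1-u)$ yields the generating function $G^{\psi}_{\beta}(u,t):=\mathbb{E}u^{N(\mathfrak{D}_{\mathfrak{L}^{\beta}_{t}})}=\sum_{k\ge 0}p^{\psi}_{k}(t;\beta)u^{k}=E_{\beta}(-t^{\beta}\psi(\lambda(1-u)))$. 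The initial condition is immediate, either from $E_{\beta}(0)=1$ and $\partial_{\lambda}^{k}1=0$ for $k\ge 1$, or directly because $\mathfrak{L}^{\beta}_{0}=\mathfrak{D}_{0}=0$ almost surely.

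Next I would act on $G^{\psi}_{\beta}$ with the two operators appearing in the equation. On the time side, $E_{\beta}(-ct^{\beta})$ is the eigenfunction of the Dzhrbashyan--Caputo derivative with eigenvalue $-c$, so $\mathcal{D}^{\beta}_{t}G^{\psi}_{\beta}(u,t)=-\psi(\lambda(1-u))\,G^{\psi}_{\beta}(u,t)$. On the space side, for any sequence $(w_{k})$ with generating function $\widehat{w}(u)=\sum_{k}w_{k}u^{k}$ and with $B^{N(s)}w_{k}=w_{k-N(s)}$ one has $\sum_{k}\mathbb{E}(w_{k}-B^{N(s)}w_{k})u^{k}=(1-\mathbb{E}u^{N(s)})\widehat{w}(u)=(1-e^{-\lambda s(1-u)})\widehat{w}(u)$; integrating against $M(ds)$ (the integral converging for $u\in[0,1)$ by \eqref{levy-meas-cond2}) and taking $b=0$ as the definition \eqref{dif-op-X} of $\mathbb{D}_{\psi}$ presupposes, we get $\sum_{k}(\mathbb{D}_{\psi}w)_{k}u^{k}=-\psi(\lambda(1-u))\widehat{w}(u)$, which is precisely the statement that $-\mathbb{D}_{\psi}$ acts on generating functions as multiplication by $\psi(\lambda(1-u))$, i.e.\ the second equality in \eqref{op-DNL-integral}. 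Applying this with $w_{k}=p^{\psi}_{k}(t;\beta)$ we obtain $\sum_{k}(\mathbb{D}_{\psi}p^{\psi}_{\cdot}(t;\beta))_{k}u^{k}=-\psi(\lambda(1-u))\,G^{\psi}_{\beta}(u,t)=\mathcal{D}^{\beta}_{t}G^{\psi}_{\beta}(u,t)$, and extracting the coefficient of $u^{k}$ (the generating function determines the sequence) gives $(\mathcal{D}^{\beta}_{t}-\mathbb{D}_{\psi})p^{\psi}_{k}(t;\beta)=0$; uniqueness under the prescribed datum follows by the Laplace-transform argument already used in the proof of Theorem~2.

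The scalar Laplace transform and the Mittag-Leffler eigenfunction property are routine, so the points needing care are the interchanges of the sum over $k$ with the integral $M(ds)$ and with the operators $\mathcal{D}^{\beta}_{t}$ and $\partial_{\lambda}^{k}$, together with the well-definedness of $\mathbb{D}_{\psi}$ on the sequences at hand — all of which reduce to the integrability condition \eqref{levy-meas-cond2} and the bound $0\le p^{\psi}_{k}(t;\beta)\le 1$. I expect the main (mild) obstacle to be the bookkeeping between $\psi$ as a L\'{e}vy symbol and as the Laplace exponent of $\mathfrak{D}$ — so that the driftless representation used above is the one simultaneously consistent with $\mathbb{E}e^{-\xi\mathfrak{D}_{t}}=e^{-t\psi(\xi)}$ and with the definition \eqref{dif-op-X} of $\mathbb{D}_{\psi}$ — and checking that everything degenerates correctly when $\beta=1$, where $\mathfrak{L}^{1}_{t}=t$, $E_{1}(-ct)=e^{-ct}$ and $\mathcal{D}^{1}_{t}=\partial_{t}$, so that the equation becomes the first-order $(\partial_{t}-\mathbb{D}_{\psi})p^{\psi}_{k}(t;1)=0$.
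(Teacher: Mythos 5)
Your proposal is correct and follows essentially the same route as the paper: conditioning on $\mathfrak{L}^{\beta}_{t}$ to get $E_{\beta}(-t^{\beta}\psi(\lambda))$, invoking the Mittag-Leffler eigenfunction property of $\mathcal{D}^{\beta}_{t}$, and identifying the symbol of $\mathbb{D}_{\psi}$ on the Poisson semigroup via $\mathbb{E}B^{N(s)}u_{k}=e^{-s\lambda(I-B)}u_{k}=P_{s}u_{k}$. The only cosmetic difference is that you work with the probability generating function in $u$ where the paper uses the characteristic function in $e^{i\xi}$, and your explicit handling of the drift/sign conventions for $\psi$ is a reasonable (indeed clarifying) resolution of the paper's notational looseness.
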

\begin{proof}
We have that
\begin{equation}
\mathbb{E} \exp \left( -\lambda \mathfrak{D}_{\mathfrak{L}^\beta_t} \right) = \mathbb{E} \exp\left( - \psi(\lambda) \mathfrak{L}^\beta_t\right) = E_\beta(-t^\beta \psi(\lambda)) \label{lap-NDL}
\end{equation}
and therefore
\begin{align*}
p^\psi_k(t;\beta) = & \frac{(-\lambda \partial_\lambda)^k}{k!} \mathbb{E} \exp \left( -\lambda \mathfrak{D}_{\mathfrak{L}^\beta_t} \right) =  \mathbb{E} \left[ \frac{(-\lambda \partial_\lambda)^k}{k!} \exp \left( -\lambda \mathfrak{D}_{\mathfrak{L}^\beta_t} \right) \right]\\
= & \int_0^\infty \frac{(-\lambda \partial_\lambda)^k}{k!} e^{-\lambda s} P\left\lbrace \mathfrak{D}_{\mathfrak{L}^\beta_t} \in ds\right\rbrace = \int_0^\infty P\{N(s) =k \} P\left\lbrace \mathfrak{D}_{\mathfrak{L}^\beta_t} \in ds\right\rbrace\\
= & P\{N(\mathfrak{D}_{\mathfrak{L}^\beta_t})=k\}.
\end{align*}
Furthermore, from \eqref{lap-NDL},
\begin{equation*}
\mathbb{E}\exp\left(i \xi N(\mathfrak{D}_{\mathfrak{L}^\beta_t}) \right)= \mathbb{E} \exp\left(- \lambda (1- e^{i\xi}) \mathfrak{D}_{\mathfrak{L}^\beta_t}) \right) = E_\beta\Big(-t^\beta \psi\left(\lambda(1-e^{i\xi})\right)\Big).
\end{equation*}
It is well-known that $E_\beta$ is an eigenfunction for $\mathcal{D}^\beta_t$; in particular, from \eqref{symb-D-psi} we can write
\begin{equation*}
\mathcal{D}^\beta_t\, E_\beta\Big(-t^\beta \psi\left(\lambda(1-e^{i\xi})\right)\Big) = \psi\left(\lambda(1-e^{i\xi})\right) \,  E_\beta\Big(-t^\beta \psi\left(\lambda(1-e^{i\xi})\right)\Big).
\end{equation*}
The symbol $\psi\left(\lambda(1-e^{i\xi})\right)$ is the Fourier multiplier of  $\mathbb{D}_\psi$ when $\Psi(\xi) = \lambda(1-e^{i\xi})$, that is the L\'{e}vy process $X$ is the Poisson process $N$.

We now show that \eqref{op-DNL-integral} holds true. It suffices to see that
\begin{align*}
\mathbb{E} \left( u_k - B^{N(s)}u_k \right) = &  \left( u_k - \mathbb{E} B^{N(s)}u_k \right)\\
= & \left( u_k - e^{-s\lambda} \sum_{n=0}^{\infty} \frac{(s\lambda B)^{n}}{n!} u_k \right)\\
= & \left( u_k - e^{-s\lambda (I - B)} u_k \right)\\
= & \left( u_k - P_s u_k \right).
\end{align*}
From \eqref{symb-D-psi} (or \eqref{symb-sub-eta}) we get the claim.
\end{proof}

\begin{os}
Concerning the solution \eqref{law-NDL}, it is worth to recall that
\begin{equation*}
(-\partial_z)^k \psi(z) \leq 0 \quad \textrm{and} \quad (-\partial_z)^k E_\beta(-z) \geq 0.
\end{equation*}
Indeed, $\psi$ is a Bernstein function and therefore its derivative is completely monotone, whereas the Mittag-Leffler
is completely monotone. Furthermore, $E_\beta(0)=1$ and there exists a unique probability measure $m$ such that $E_\beta(-\xi) = \int_0^\infty e^{-\xi x}m(dx)$. In particular, we have seen before that $m(dx)=l_\beta(x,1)dx$. We notice that, if $f$ is a Bernstein function on $(0,\infty)$, then $g(f)$ is completely monotone for every completely monotone $g$ (see Theorem 3.6 of \cite{book-song-et-al}). This means that $p^\psi_k \geq 0$ for all $k\geq 0$.
\end{os}

\section{Poisson process with random drift}
\label{SecPoisDrift}

We consider now the process defined in \eqref{dr3} where $\mathfrak{F}^{\gamma, \beta}_t = \mathfrak{A}^\gamma_{\mathfrak{L}^\beta_t}$ and $\mathfrak{F}^{\alpha, \beta}_t = \mathfrak{A}^\alpha_{\mathfrak{L}^\beta_t}$ are independent from $N(t)$.

\begin{te}
The drifted process
\begin{equation}
N(\mathfrak{F}^{\gamma,\beta}_t) + a \mathfrak{F}^{\alpha, \beta}_t, \quad
t>0, \; a \geq 0, \; \alpha, \gamma, \beta \in (0, 1]
\label{sub-pois-with-sub-drift}
\end{equation}
has probability law
\begin{align}
\Pr\{N(\mathfrak{F}^{\gamma,\beta}_t) + a \mathfrak{F}^{\alpha,
\beta}_t \in dx\}/dx = & \sum_{k=0}^\infty \frac{(-\lambda
\partial_\lambda)^k}{k!} \int_0^\infty e^{-s\lambda^\gamma} h_\alpha(x-k,
a^\alpha s) l_\beta(s,t)ds \notag \\
= & \sum_{k=0}^\infty \frac{(-\lambda \partial_\lambda)^k}{k!}
\mathbb{E} \left[ \exp (-\lambda^\gamma \mathfrak{L}^\beta_t)\,
h_\alpha(x-k, a^\alpha \mathfrak{L}^\beta_t)\right]  \label{semig-rep}
\end{align}
which is the solution to the equation
\begin{equation}
\left(\mathcal{D}^\beta_t + a^\alpha \partial^\alpha_x
+ \lambda^\gamma \left(I - K \right)^\gamma \right) u(x,t)=0, \quad x \in
\mathbb{R}_0^+,\; t>0  \label{genEq}
\end{equation}
with initial condition $u(x,0)=\delta(x)$, where
\begin{equation}
(I-K)^\gamma = \sum_{j=0}^\infty (-1)^j \binom{\gamma}{j} K^j \label{I-K-power}
\end{equation}
and
\begin{equation*}
K^j = \left\lbrace
\begin{array}{ll}
e^{-j\partial_x}, & \text{if }\; a >0 \\
B^j, & \text{if }\; a=0%
\end{array}
\right .
\end{equation*}
is the shift operator.
\label{theorem-semig-N}
\end{te}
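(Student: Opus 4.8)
The plan is to follow the same pattern used in the proof of Theorem 2 and in the proof of the general subordinator result: first verify the stated density by computing its transforms, then identify the governing equation by reading off its Fourier (or Laplace) multiplier. Since the drift term and the base process are independent, the natural transform to use is the Laplace transform in $x$ for the drift part combined with the probability generating function in the Poisson variable; equivalently, one works with $\mathbb{E}\, u^{N(\mathfrak{F}^{\gamma,\beta}_t)} e^{-\xi a \mathfrak{F}^{\alpha,\beta}_t}$. First I would note that, conditionally on $\mathfrak{L}^\beta_t = \ell$, the two components $N(\mathfrak{A}^\gamma_\ell)$ and $a\mathfrak{A}^\alpha_\ell$ are independent, so by \eqref{gen-fun-Poi-s-t}-type reasoning together with \eqref{lap-sub},
\begin{equation*}
\mathbb{E}\Big[ u^{N(\mathfrak{F}^{\gamma,\beta}_t)} e^{-\xi a \mathfrak{F}^{\alpha,\beta}_t} \Big] = \mathbb{E} \exp\Big( -\lambda^\gamma (1-u)^\gamma \mathfrak{L}^\beta_t - \xi^\alpha a^\alpha \mathfrak{L}^\beta_t \Big) = E_\beta\Big( -t^\beta \big( a^\alpha \xi^\alpha + \lambda^\gamma(1-u)^\gamma \big) \Big),
\end{equation*}
using \eqref{lap-inv} in the last step. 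Expanding $E_\beta$ and disintegrating over $\mathfrak{L}^\beta_t$ recovers exactly \eqref{semig-rep}: the $e^{-s\lambda^\gamma}$ factor comes from the Poisson Laplace transform differentiated $k$ times, while $h_\alpha(x-k, a^\alpha s)$ comes from inverting $e^{-\xi^\alpha a^\alpha s}$ in $x$ (recalling $\mathfrak{A}^\alpha$ scales as $a^\alpha s \mapsto$ argument), and $l_\beta(s,t)$ is the density of $\mathfrak{L}^\beta_t$.

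Next I would establish the PDE \eqref{genEq}. The key fact is that $E_\beta$ is an eigenfunction of $\mathcal{D}^\beta_t$, so applying $\mathcal{D}^\beta_t$ to the transform above produces the multiplier $a^\alpha \xi^\alpha + \lambda^\gamma(1-u)^\gamma$ times the transform. It then remains to check that this multiplier is precisely the symbol of the spatial operator $a^\alpha \partial^\alpha_x + \lambda^\gamma (I-K)^\gamma$. For the $\partial^\alpha_x$ part this is the standard Riemann–Liouville Laplace symbol $\xi^\alpha$ (as already used in \eqref{pdeFproc}). For the $(I-K)^\gamma$ part I would reuse the Bernstein-function representation \eqref{typical-bern-function} exactly as in the proof of Theorem 2: writing $(I-K)^\gamma = \frac{\gamma}{\Gamma(1-\gamma)} \int_0^\infty (f - e^{-s(I-K)}f)\, s^{-\gamma-1}\,ds$ and recognizing $e^{-s(I-K)}$ as the Poisson transition semigroup $P_s$ (here the shift semigroup \eqref{Pois-semigroup} with $a=0$, since $K$ acts as $e^{-\partial_x}$ on the drift variable, or as $B$ when $a=0$), whose Laplace symbol is $e^{-s(1-u)}$ (resp. $e^{-s(1-e^{-\xi})}$), so that the subordinated symbol is $(1-u)^\gamma$ (resp. $(1-e^{-\xi})^\gamma$). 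Matching the two multipliers, and noting $\widetilde{f}(\xi)$-cancellation, gives \eqref{genEq}; the initial condition $u(x,0)=\delta(x)$ follows from $E_\beta(0)=1$, and uniqueness from the Laplace/generating-function argument as before.

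The main obstacle I anticipate is the careful handling of the shift operator $K$ on the half-line: when $a>0$ the process $N(\mathfrak{F}^{\gamma,\beta}_t)+a\mathfrak{F}^{\alpha,\beta}_t$ lives on $\mathbb{R}_0^+$ with an absolutely continuous drift component, so $K = e^{-\partial_x}$ acts as a genuine translation and the Laplace transform manipulation $\int_0^\infty e^{-\xi x}(I-K)^\gamma u\,dx = (1-e^{-\xi})^\gamma \widetilde{u}(\xi)$ needs the boundary terms to vanish, which is where the condition $u(x,0)=\delta(x)$ concentrated at the origin (hence the support constraint $x \geq a\mathfrak{F}^{\alpha,\beta}_t$) matters; whereas when $a=0$ one is back in the purely discrete setting of Theorem 2 with $K=B$. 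I would treat these two cases in parallel, as the statement already does via the definition of $K^j$, and remark that the formal Bernstein-subordination computation is justified exactly under the same regularity hypothesis ($|\partial_t u(\cdot,t)| \leq t^{\gamma'-1} g$ with $g \in L^\infty$) invoked in the proof of Theorem 2. Everything else is routine bookkeeping with Mittag-Leffler and Wright functions.
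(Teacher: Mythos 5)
Your proposal is correct and follows essentially the same route as the paper: condition on $\mathfrak{L}^\beta_t$ to obtain the transform $E_\beta\bigl(-t^\beta(a^\alpha\xi^\alpha+\lambda^\gamma(1-e^{-\xi})^\gamma)\bigr)$, disintegrate to get the density \eqref{semig-rep}, and match Laplace multipliers (equivalently, a double Laplace transform in $x$ and $t$) to identify \eqref{genEq} with $u_0=\delta$. The only cosmetic difference is that you derive the symbol $(1-e^{-\xi})^\gamma$ of $(I-K)^\gamma$ via the Bernstein subordination formula \eqref{typical-bern-function} as in Theorem 2, whereas the paper computes it directly from the binomial series \eqref{I-K-power}; the two are equivalent, and your explicit attention to the vanishing of boundary terms for the half-line translation is a point the paper passes over silently.
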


\begin{os}
We observe that the solution \eqref{semig-rep} can be represented as follows
\begin{equation*}
\Pr\{N(\mathfrak{F}^{\gamma,\beta}_t) + a \mathfrak{F}^{\alpha,
\beta}_t \in dx\}/dx = e^{\lambda t} P_t \varphi(x, t)
\end{equation*}
where $P_t$ is the Poisson semigroup \eqref{Pois-semigroup} and
\begin{align*}
\varphi(x,t) = & \mathbb{E} \left[ \exp (-\lambda^\gamma \mathfrak{L}^\beta_t)\,
h_\alpha(x, a^\alpha \mathfrak{L}^\beta_t)\right].
\end{align*}
By using the fact that $h_\alpha(x,t) = \frac{\alpha t}{x} l_\alpha(t,x)$ where $l_\alpha$ is the density law \eqref{density-l}, we get
\begin{align*}
\varphi(x,t) = & \mathbb{E} \left[ \frac{a^\alpha \mathfrak{L}^\beta_t}{x} \exp (-\lambda^\gamma \mathfrak{L}^\beta_t)\, l_\alpha(a^\alpha \mathfrak{L}^\beta_t, x) \right].
\end{align*}

\end{os}

\begin{proof}[Proof of Theorem \ref{theorem-semig-N}]
First we observe that
\begin{align*}
\mathbb{E}e^{-\xi N(\mathfrak{F}^{\gamma,\beta}_t) - a \xi \mathfrak{F}^{\alpha, \beta}_t} = & \mathbb{E}\Bigg[ \mathbb{E} \left[e^{-\xi N(\mathfrak{F}^{\gamma,\beta}_t)} e^{ - a \xi \mathfrak{F}^{\alpha, \beta}_t} \Big| \mathfrak{L}^\beta_t \right] \Bigg]
\end{align*}
where
\begin{equation*}
 \mathbb{E}\left[ e^{ - a \xi \mathfrak{F}^{\alpha, \beta}_t}\Big| \mathfrak{L}^\beta_t \right] = \exp\left( - a^\alpha \xi^\alpha \mathfrak{L}^\beta_t \right)
\end{equation*}
and
\begin{align*}
\mathbb{E}\left[e^{-\xi N(\mathfrak{F}^{\gamma,\beta}_t)} \Big| \mathfrak{L}^\beta_t \right]= & \mathbb{E} \left[ \mathbb{E} \left(e^{-\xi N(\mathfrak{F}^{\gamma,\beta}_t)} \big| \mathfrak{F}^{\gamma,\beta}_t \right) \Big| \mathfrak{L}^\beta_t \right]\\
= & \mathbb{E}\left[ \exp\left( - \lambda (1- e^{-\xi}) \mathfrak{F}^{\gamma,\beta}_t \right)\Big| \mathfrak{L}^\beta_t \right]\\
= & \exp\left( -\lambda^\gamma (1-e^{-\xi})^\gamma  \mathfrak{L}^\beta_t \right)
\end{align*}
Therefore we obtain that
\begin{align}
\mathbb{E}e^{-\xi N(\mathfrak{F}^{\gamma,\beta}_t) - a \xi \mathfrak{F}^{\alpha, \beta}_t} = & \mathbb{E} \exp\left( - \xi^\alpha \mathfrak{L}^\beta_t -\lambda^\gamma (1-e^{-\xi})^\gamma  \mathfrak{L}^\beta_t \right)\notag \\
= & E_\beta\left( -t^\beta a^\alpha \xi^\alpha - t^\beta \lambda^\gamma (1-e^{-\xi})^\gamma \right). \label{lap-gen-theorem}
\end{align}
The density can be obtained by considering that
\begin{align*}
\Pr\{N(\mathfrak{F}^{\gamma,\beta}_t) + a \mathfrak{F}^{\alpha, \beta}_t \in A\}= &  \sum_{k=0}^\infty \Pr\left\lbrace\mathfrak{F}^{\alpha, \beta}_t \in \frac{A-k}{a}\right\rbrace P\{ N(\mathfrak{F}^{\gamma,\beta}_t)  =k \}
\end{align*}
for every Borel set $A \in \mathcal{B}(\mathbb{R_+})$ and
\begin{align*}
\Pr\{N(\mathfrak{F}^{\gamma,\beta}_t) + a \mathfrak{F}^{\alpha, \beta}_t \in A \big| \mathfrak{L}^\beta_t = s \}= &  \sum_{k=0}^\infty \Pr\left\lbrace\mathfrak{A}^{\alpha}_s \in \frac{A-k}{a}\right\rbrace P\{ N(\mathfrak{A}^{\gamma}_s)  =k \}\\
= & \sum_{k=0}^\infty \frac{(-\lambda \partial_\lambda)^k}{k!} e^{-s \lambda^\gamma} \Pr\left\lbrace\mathfrak{A}^{\alpha}_s \in \frac{A-k}{a}\right\rbrace \\
= & \sum_{k=0}^\infty \frac{(-\lambda \partial_\lambda)^k}{k!} e^{-s \lambda^\gamma} \Pr\left\lbrace\mathfrak{A}^{\alpha}_{a^\alpha s} \in A-k\right\rbrace \\
= & \int_A \sum_{k=0}^\infty \frac{(-\lambda \partial_\lambda)^k}{k!} e^{-s \lambda^\gamma} h_\alpha(x-k, a^\alpha s)dx.
\end{align*}
By integrating with respect to $Pr\{ \mathfrak{L}^\beta_t \in ds \}$, we obtain
\begin{align*}
\Pr\{N(\mathfrak{F}^{\gamma,\beta}_t) + a \mathfrak{F}^{\alpha, \beta}_t \in A\}= &  \int_A \sum_{k=0}^\infty \frac{(-\lambda \partial_\lambda)^k}{k!} \int_0^\infty e^{-s \lambda^\gamma} h_\alpha(x-k, a^\alpha s) l_\beta(s, t)ds\,dx.
\end{align*}
and thus
\begin{equation}
\Pr\{N(\mathfrak{F}^{\gamma,\beta}_t) + a \mathfrak{F}^{\alpha, \beta}_t \in dx\}= \sum_{k=0}^\infty \frac{(-\lambda \partial_\lambda)^k}{k!} \int_0^\infty e^{-s \lambda^\gamma} h_\alpha(x-k, a^\alpha s) l_\beta(s, t)ds\,dx.
\end{equation}
A further check involves the Laplace transform
\begin{align*}
& \int_0^\infty \int_0^\infty e^{-\xi x}  e^{-s \lambda^\gamma} h_\alpha(x-k, a^\alpha s) l_\beta(s, t)ds\,dx\\
= & e^{-\xi k} \int_0^\infty  e^{-s\lambda^\gamma - a^\alpha s \xi^\alpha}l_\beta(s, t)ds = e^{-\xi k} E_\beta(-t^\beta (\lambda^\gamma + a^\alpha \xi^\alpha)).
\end{align*}
Due to the action of the shift operator we arrive at
\begin{align*}
\int_0^\infty e^{-\xi x} \Pr\{N(\mathfrak{F}^{\gamma,\beta}_t) + a \mathfrak{F}^{\alpha, \beta}_t \in dx\} = & \sum_{k=0}^\infty \frac{(-\lambda \partial_\lambda)^k}{k!} e^{-\xi k} E_\beta(-t^\beta (\lambda^\gamma + a^\alpha \xi^\alpha))\\
= & e^{- e^{-\xi} \lambda \partial_\lambda}E_\beta(-t^\beta (\lambda^\gamma + a^\alpha \xi^\alpha))\\
= & E_\beta(-t^\beta (\lambda^\gamma(1- e^{-\xi})^\gamma + a^\alpha \xi^\alpha))
\end{align*}
which coincides with \eqref{lap-gen-theorem}.

We now study the governing equation \eqref{genEq}. From representation \eqref{I-K-power}, where $K$ is the translation operator for both continuous and discrete supported functions, we get that
\begin{align*}
\int_0^\infty e^{-\xi x}(I-K)^\gamma u(x,t)dx = & \sum_{j=0}^\infty (-1)^j \binom{\gamma}{j}\int_0^\infty e^{-\xi x} K^j u(x,t)dx\\
= & \sum_{j=0}^\infty (-1)^j \binom{\gamma}{j}\int_0^\infty e^{-\xi x} u(x-j,t)dx\\
= & \sum_{j=0}^\infty (-1)^j \binom{\gamma}{j} e^{-j\xi}\int_0^\infty e^{-\xi x} u(x, t)dx\\
= & (1 - e^{-\xi})^\gamma \int_0^\infty e^{-\xi x} u(x, t)dx.
\end{align*}
Let us write
\begin{equation*}
\widetilde{\widetilde{u}}(\xi, \mu) = \int_0^\infty e^{-\mu t }\int_0^\infty e^{-\xi x} u(x, t)dx dt.
\end{equation*}
Equation \eqref{genEq} becomes
\begin{align*}
\mu^\beta \widetilde{\widetilde{u}}(\xi, \mu) - \mu^{\beta -1}\widetilde{u_0}(\xi) + a^\alpha \xi^\alpha  \widetilde{\widetilde{u}}(\xi, \mu) = -\lambda^\gamma (I- e^{-\xi})^\gamma \widetilde{\widetilde{u}}(\xi, \mu)
\end{align*}
from which we obtain
\begin{equation}
\widetilde{\widetilde{u}}(\xi, \mu) = \frac{\mu^{\beta -1}}{\mu^\beta + a^\alpha \xi^\alpha + \lambda^\gamma(1-e^{-\xi})^\gamma} \widetilde{u_0}(\xi) \label{doubleLNdrift2}
\end{equation}
where, obviously
\begin{equation*}
\widetilde{u_0}(\xi) = \int_0^\infty e^{-\xi x} u(x, 0)dx.
\end{equation*}
From \eqref{lap-gen-theorem} we obtain the double Laplace transform
\begin{equation}
\int_0^\infty e^{-\mu t} \mathbb{E}\left[e^{-\xi N(\mathfrak{F}^{\gamma,\beta}_t) - a \xi \mathfrak{F}^{\alpha, \beta}_t} \right]dt = \frac{\mu^{\beta -1}}{\mu^\beta + a^\alpha \xi^\alpha + \lambda^\gamma(1-e^{-\xi})^\gamma}. \label{doubleLNdrift1}
\end{equation}
For $u_0=\delta$, formula \eqref{doubleLNdrift2} coincides with \eqref{doubleLNdrift1} and therefore we obtain the claimed result.
\end{proof}

For $\beta =1$, the composition \eqref{sub-pois-with-sub-drift} becomes
\begin{equation}
N(\mathfrak{A}_{t}^{\gamma })+a\mathfrak{A}_{t}^{\alpha },\quad t>0 \label{N-time-changed-A}
\end{equation}
and coincides, for $\gamma =1$, with $N(t)+a\mathfrak{A}_{t}^{\alpha }.$ For
the latter we present the following results, concerning its governing
equation and its hitting time. We remark that, for $\alpha =1,$ it reduces
to the drifted Poisson process \eqref{PoisDrift} whereas, for $a=0,$ it
coincides with the standard Poisson process.

\begin{coro}
The Poisson process with subordinated drift
\begin{equation}
N(t)+a\mathfrak{A}_{t}^{\alpha },\quad t>0,\;a\geq 0,\;\alpha \in (0,1]
\label{pois-with-sub-drift-coro}
\end{equation}%
has probability law
\begin{equation}
Pr\{N(t)+a\mathfrak{A}_{t}^{\alpha }\in dx\}/dx=e^{-\lambda
t}\sum_{k=0}^{\infty }\frac{(\lambda t)^{k}}{k!}h_{\alpha }\left(
x-k,a^{\alpha }t\right) \,\mathbf{1}_{(k<x)}  \label{lawRandomDrift}
\end{equation}%
which solves the fractional equation
\begin{equation*}
\left( \frac{\partial }{\partial t}+ a^{\alpha} \partial _{x}^{\alpha
}+\lambda (I-K)\right) u(x,t)=0,\quad x\geq 0,\;t>0
\end{equation*}%
subject to the initial condition $u_{0}=\delta $.
\end{coro}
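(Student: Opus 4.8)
The plan is to read this Corollary off Theorem~\ref{theorem-semig-N} at $\beta=\gamma=1$, making the degeneracies explicit. At $\beta=1$ the inverse stable subordinator collapses, $\mathfrak{L}^1_t=t$ (equivalently $l_1(s,t)=\delta(s-t)$), the Mittag-Leffler function becomes $E_1(-z)=e^{-z}$, and $\mathcal{D}^1_t u=\partial_t u$ since the correction term carries the factor $1/\Gamma(1-\beta)\to 0$; at $\gamma=1$ the binomial expansion \eqref{I-K-power} truncates to $(I-K)^1=I-K$. Substituting these into \eqref{semig-rep} gives $\sum_{k}\frac{(-\lambda\partial_\lambda)^k}{k!}\,e^{-\lambda t}h_\alpha(x-k,a^\alpha t)$; since $h_\alpha(y,\cdot)$ is supported on $y>0$ the summand vanishes unless $k<x$, which is exactly the indicator in \eqref{lawRandomDrift}, and pulling $h_\alpha(x-k,a^\alpha t)$ (independent of $\lambda$) out of $\frac{(-\lambda\partial_\lambda)^k}{k!}$ leaves the Poisson weight $e^{-\lambda t}(\lambda t)^k/k!$. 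In the same way \eqref{genEq} reduces to the stated transport equation, and letting $t\to 0^+$ in \eqref{lawRandomDrift} leaves only the $k=0$ term, $e^{-\lambda t}h_\alpha(x,a^\alpha t)\to\delta(x)$, which gives $u_0=\delta$.

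Alternatively, and in keeping with the style of the other proofs, I would verify \eqref{lawRandomDrift} directly through Laplace transforms. By independence of $N$ and $\mathfrak{A}^\alpha$, the Poisson generating function and \eqref{lap-sub},
\[
\mathbb{E}\,e^{-\xi(N(t)+a\mathfrak{A}^\alpha_t)}=e^{-\lambda t(1-e^{-\xi})}\,e^{-ta^\alpha\xi^\alpha}.
\]
On the other side, the change of variable $x=y+k$ turns $\int_k^\infty e^{-\xi x}h_\alpha(x-k,a^\alpha t)\,dx$ into $e^{-\xi k}\int_0^\infty e^{-\xi y}h_\alpha(y,a^\alpha t)\,dy=e^{-\xi k}e^{-a^\alpha t\xi^\alpha}$, so that summing the Poissonian series in $k$ recovers the same expression; uniqueness of the Laplace transform then identifies \eqref{lawRandomDrift} as the law of \eqref{pois-with-sub-drift-coro}.

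Finally I would confirm that \eqref{lawRandomDrift} solves the PDE by applying the $x$-Laplace transform to $(\partial_t+a^\alpha\partial_x^\alpha+\lambda(I-K))u=0$, using $\int_0^\infty e^{-\xi x}(I-K)u\,dx=(1-e^{-\xi})\widetilde{u}_\xi(t)$ (as in Theorem~\ref{theo-green-pois}) and $\int_0^\infty e^{-\xi x}\partial_x^\alpha u\,dx=\xi^\alpha\widetilde{u}_\xi(t)$, where $\widetilde{u}_\xi(t)=\int_0^\infty e^{-\xi x}u(x,t)\,dx$; this produces the linear ODE $\partial_t\widetilde{u}_\xi=-(a^\alpha\xi^\alpha+\lambda(1-e^{-\xi}))\widetilde{u}_\xi$ with $\widetilde{u}_\xi(0)=1$, whose unique solution is precisely the Laplace transform computed above. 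The one point requiring care, and the step I would flag as the main obstacle, is the vanishing of the boundary term in the Laplace transform of $\partial_x^\alpha$ at $x=0^+$; this is handled exactly as for the stable density earlier in the paper, since the density in \eqref{lawRandomDrift} inherits from $h_\alpha$ its decay as $x\to 0^+$ and hence the vanishing of its Riemann-Liouville fractional integral there.
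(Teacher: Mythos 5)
Your proposal is correct, but it does not follow the paper's own route. The paper proves only the identification of the law \eqref{lawRandomDrift} and does so purely probabilistically: it conditions on $N(t)=k$ to write $Pr\{N(t)+a\mathfrak{A}^{\alpha}_{t}<x\}=\sum_{k}Pr\{\mathfrak{A}^{\alpha}_{t}<\frac{x-k}{a}\}p_{k}(t)\mathbf{1}_{(k<x)}$, differentiates in $x$ to get $\frac{1}{a}\sum_{k}h_{\alpha}(\frac{x-k}{a},t)p_{k}(t)\mathbf{1}_{(k<x)}$, and then invokes the self-similarity of the stable subordinator to rewrite $\frac{1}{a}h_{\alpha}(\frac{x-k}{a},t)=h_{\alpha}(x-k,a^{\alpha}t)$; the governing equation is simply inherited from Theorem \ref{theorem-semig-N}. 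Your two arguments reach the same conclusions by different means: the specialization of \eqref{semig-rep} and \eqref{genEq} at $\beta=\gamma=1$ is exactly the logic implicit in the word ``Corollary,'' and your Laplace-transform verification $\int_{k}^{\infty}e^{-\xi x}h_{\alpha}(x-k,a^{\alpha}t)\,dx=e^{-\xi k-a^{\alpha}t\xi^{\alpha}}$ encodes, on the transform side, precisely the scaling identity the paper applies at the level of densities (both being equivalent to $\mathbb{E}e^{-a\xi\mathfrak{A}^{\alpha}_{t}}=e^{-a^{\alpha}t\xi^{\alpha}}$ from \eqref{lap-sub}). What your version buys is an explicit, self-contained check of the transport equation, which the paper omits here, together with an honest flag of the only delicate analytic point, namely the vanishing of the boundary term in the Laplace transform of the Riemann--Liouville derivative $\partial_{x}^{\alpha}$ at $x=0^{+}$; this is indeed handled as you say, consistently with the boundary conditions imposed elsewhere in the paper. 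What the paper's version buys is brevity and a cleaner probabilistic reading of where the indicator $\mathbf{1}_{(k<x)}$ and the argument $a^{\alpha}t$ come from. Both are sound; the only minor caveat in yours is that the case $a=0$ (allowed in \eqref{pois-with-sub-drift-coro}) degenerates, since $h_{\alpha}(x-k,0)$ must then be read as $\delta(x-k)$ and $K$ as $B$, but the paper glosses over this point as well.
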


\begin{proof}
From the fact that
\begin{align*}
Pr\{ N(t) + a \mathfrak{A}^\alpha_t < x \} =& \sum_{k=0}^\infty Pr\left\lbrace \mathfrak{A}^\alpha_t < \frac{x - k}{a} \right\rbrace p_k(t)\, \mathbf{1}_{(k < x)}
\end{align*}
we immediately get that
\begin{align*}
Pr\{ N(t) + a \mathfrak{A}^\alpha_t \in dx \}/dx = \frac{1}{a}\sum_{k=0}^\infty h_\alpha \left( \frac{x - k}{a}, t \right)p_k(t)\, \mathbf{1}_{(k < x)}
\end{align*}
which coincides with \eqref{lawRandomDrift} by the autosimilarity of the stable process $\mathfrak{A}^\alpha_t$, $t>0$.
\end{proof}


\begin{te}
\label{theo-H-bound-cond}
Let us consider $b\geq 0$, $\beta, \gamma \in (0,1]$. The density $h(x,t)=Pr\{\mathfrak{H}_{t}\in dx\}/dx$ of the
hitting time
\begin{equation}
\mathfrak{H}_{t}=\inf \left\{ s\geq 0\,:\,N(\mathfrak{A}^\gamma_s)+b\mathfrak{A}_{s}^{\beta }
\notin (0,t)\right\}, \quad t\geq 0 \label{hitting-mathfrak-h}
\end{equation}%
is the solution to the following equation
\begin{equation}
b^\beta \mathcal{D}^\beta_t \, u + \lambda^\gamma (I - K)^\gamma u = -
\frac{\partial u}{\partial x}, \quad x> 0, \; t> 0  \label{eq-galpha}
\end{equation}%
with initial and boundary conditions
\begin{equation}
\label{boundary-H}
u(x,0) = \delta(x), \qquad
u(0, t) = -\frac{\gamma \lambda^\gamma}{\Gamma(1-\gamma)}\sum_{k=0}^\infty \frac{\Gamma(k-\gamma)}{k!}  H(t-k)
\end{equation}
where $H(\cdot)$ is the Heaviside step function and
\begin{equation*}
K=\left\{
\begin{array}{ll}
e^{-\partial _{t}}, & \text{if }\;a>0, \\
B, & \text{if }\;a=0%
\end{array}%
\right.
\end{equation*}
is the shift operator.
\end{te}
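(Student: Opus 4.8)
The plan is to reduce the statement to a double Laplace transform identity, using the law of the non-decreasing process that drives the hitting time. Set $Z_{s}:=N(\mathfrak{A}^{\gamma}_{s})+b\,\mathfrak{A}^{\beta}_{s}$, $s\ge 0$; this is exactly the process \eqref{N-time-changed-A} with $b$ in place of $a$ and $\beta$ in place of $\alpha$, hence it is covered by Theorem \ref{theorem-semig-N} with the inverse‑stable index set equal to $1$, and from \eqref{lap-gen-theorem} (recall $E_{1}(z)=e^{z}$) its law $q(\ell,s):=Pr\{Z_{s}\in d\ell\}/d\ell$ has Laplace transform
\[
\mathbb{E}\,e^{-\xi Z_{s}}=\exp\big(-s\,c(\xi)\big),\qquad c(\xi):=b^{\beta}\xi^{\beta}+\lambda^{\gamma}(1-e^{-\xi})^{\gamma}.
\]
Since $Z$ has non‑decreasing paths and $Z_{0}=0$, the hitting time obeys the inversion $Pr\{\mathfrak{H}_{t}<x\}=Pr\{Z_{x}>t\}$, as in \eqref{relP} and \eqref{cond3}; the possible atoms of $Z_{x}$ affect only a Lebesgue‑null set of levels $t$ (as in the earlier lemma identifying the hitting time of $\mathfrak{F}^{\alpha,\beta}$), so
\[
h(x,t)=-\frac{\partial}{\partial x}\,Pr\{Z_{x}\le t\}=-\frac{\partial}{\partial x}\int_{0}^{t}q(\ell,x)\,d\ell .
\]

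First I would Laplace‑transform this identity, in $t\mapsto\xi$ and then in $x\mapsto\eta$. A Fubini exchange gives $\int_{0}^{\infty}e^{-\xi t}h(x,t)\,dt=-\xi^{-1}\,\partial_{x}\,\mathbb{E}e^{-\xi Z_{x}}=\xi^{-1}c(\xi)\,e^{-x\,c(\xi)}$, and a further transform in $x$ yields
\[
\widetilde{\widetilde{h}}(\eta,\xi)=\frac{1}{\xi}\cdot\frac{c(\xi)}{\eta+c(\xi)}=\frac{b^{\beta}\xi^{\beta-1}+\xi^{-1}\lambda^{\gamma}(1-e^{-\xi})^{\gamma}}{\eta+b^{\beta}\xi^{\beta}+\lambda^{\gamma}(1-e^{-\xi})^{\gamma}} .
\]

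Next I would transform the candidate problem \eqref{eq-galpha}--\eqref{boundary-H}. In the $t$‑variable the Dzhrbashyan--Caputo derivative contributes $\xi^{\beta}\widetilde{u}-\xi^{\beta-1}u(x,0^{+})=\xi^{\beta}\widetilde{u}-\xi^{\beta-1}\delta(x)$ by the initial condition, while $(I-K)^{\gamma}$ — since $K$ shifts the time argument by $1$ and $u(x,\cdot)$ is supported on $[0,\infty)$ — has $t$‑symbol $(1-e^{-\xi})^{\gamma}$, the same computation as in the proof of Theorem \ref{theorem-semig-N}, valid equally for $K=e^{-\partial_{t}}$ ($b>0$) and $K=B$ ($b=0$). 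Transforming also in $x\mapsto\eta$ and inserting the boundary value $u(0,t)$ from \eqref{boundary-H} gives
\[
\widetilde{\widetilde{u}}(\eta,\xi)\big(\eta+b^{\beta}\xi^{\beta}+\lambda^{\gamma}(1-e^{-\xi})^{\gamma}\big)=b^{\beta}\xi^{\beta-1}+\int_{0}^{\infty}e^{-\xi t}u(0,t)\,dt .
\]
Comparing with the previous display, the two transforms coincide precisely when $\int_{0}^{\infty}e^{-\xi t}u(0,t)\,dt=\xi^{-1}\lambda^{\gamma}(1-e^{-\xi})^{\gamma}$.

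It remains to verify this last identity, which is the meaning of \eqref{boundary-H}. Using $\int_{0}^{\infty}e^{-\xi t}H(t-k)\,dt=e^{-k\xi}/\xi$, the right‑hand side of \eqref{boundary-H} transforms to $-\frac{\gamma\lambda^{\gamma}}{\xi\,\Gamma(1-\gamma)}\sum_{k\ge 0}\frac{\Gamma(k-\gamma)}{k!}e^{-k\xi}$; then $\frac{\Gamma(k-\gamma)}{k!}=(-1)^{k}\Gamma(-\gamma)\binom{\gamma}{k}$ together with $\Gamma(1-\gamma)=-\gamma\,\Gamma(-\gamma)$ and the binomial series $(1-z)^{\gamma}=\sum_{k\ge 0}(-1)^{k}\binom{\gamma}{k}z^{k}$ (at $z=e^{-\xi}$) collapses the sum to $\xi^{-1}\lambda^{\gamma}(1-e^{-\xi})^{\gamma}$, as required. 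By uniqueness of the double Laplace transform, $h$ is the solution of \eqref{eq-galpha}--\eqref{boundary-H}. The one genuinely delicate point is this last step — recognizing the series in \eqref{boundary-H} as the transform of $\xi^{-1}\lambda^{\gamma}(1-e^{-\xi})^{\gamma}$, and keeping track of the boundary terms at $x=0$ and $t=0$; everything else is a routine transform manipulation, carried out identically for $b>0$ and $b=0$.
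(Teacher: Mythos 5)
Your argument is correct and follows essentially the same route as the paper's proof: invert the non-decreasing process $N(\mathfrak{A}^{\gamma}_{x})+b\mathfrak{A}^{\beta}_{x}$ to get $\widetilde{h}(x,\xi)=\xi^{-1}c(\xi)e^{-xc(\xi)}$, take a second Laplace transform, transform the Cauchy problem \eqref{eq-galpha}--\eqref{boundary-H} keeping the boundary term $\widetilde{u}(0,\xi)$, and match the two rational expressions in $(\eta,\xi)$. The only (harmless) deviation is in checking $\widetilde{u}(0,\xi)=\xi^{-1}\lambda^{\gamma}(1-e^{-\xi})^{\gamma}$: you use the generalized binomial series via $\Gamma(k-\gamma)/k!=(-1)^{k}\Gamma(-\gamma)\binom{\gamma}{k}$, whereas the paper routes the same identity through the Bernstein integral representation \eqref{typical-bern-function}; both are valid.
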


\begin{os}
We observe that, for $\gamma=1$ in \eqref{boundary-H}, the boundary condition reduces to
\begin{equation*}
u(0, t) = \lambda \left[ H(t) - H(t-1) \right].
\end{equation*}
Indeed, for $\gamma \notin \mathbb{N}$, we can write
\begin{equation*}
u(0,t) = \lambda^\gamma \left[ H(t) - \gamma H(t-1) - \frac{\gamma (1-\gamma)}{2} H(t-2) - \frac{\gamma (1-\gamma)(2-\gamma)}{3!} H(t-3) - \ldots \right]
\end{equation*}
and, for $\gamma=1$, we get the claim.
\end{os}

\begin{proof}[Proof of Theorem \ref{theo-H-bound-cond}]
By definition, we can write%
\begin{align*}
\int_{0}^{+\infty }e^{-\xi t}\Pr \{\mathfrak{H}_{t}>x\}dt  = & \int_{0}^{+\infty }e^{-\xi t}Pr\{N(\mathfrak{A}^\gamma_x)+b\mathfrak{A}_{x}^{\beta }<t\}dt \\
=& \xi ^{-1}\int_{0}^{+\infty }e^{-\xi t}\frac{\partial }{\partial t} Pr\{N(\mathfrak{A}^\gamma_x)+b\mathfrak{A}_{x}^{\beta }<t\}dt \\
= & [\text{by \eqref{lap-gen-theorem}} ] \\
= & \xi ^{-1}e^{-b^{\beta }\xi ^{\beta }x-\lambda^\gamma x(1-e^{-\xi })^\gamma}.
\end{align*}%
Therefore we obtain
\begin{align}
\int_{0}^{+\infty }e^{-\xi t}h(x,t)dt = & - \int_0^{+\infty} e^{-\xi t} \frac{\partial}{\partial x}
Pr \{\mathfrak{H}_{t} >x\} dt \notag \\
= & \xi^{-1} \left[ b^{\beta }\xi^{\beta }+\lambda^\gamma (1-e^{-\xi })^\gamma \right] e^{-b^{\beta }\xi ^{\beta}x-\lambda^\gamma x(1-e^{-\xi })^\gamma} \notag \\
= & \widetilde{h}(x, \xi) \label{h-tilde}.
\end{align}%
We immediately get that
\begin{equation*}
\widetilde{\widetilde{h}}(\mu, \xi) = \int_0^\infty e^{-\mu x} \widetilde{h}(x, \xi)dx = \frac{\xi^{-1} \left[ b^{\beta}\xi^{\beta}+\lambda^\gamma (1-e^{-\xi })^\gamma \right]}{\mu + b^{\beta}\xi ^{\beta} +\lambda^\gamma (1-e^{-\xi})^\gamma}
\end{equation*}
Let us now focus on the equation \eqref{eq-galpha}. We have that
\begin{equation*}
\int_{0}^{+\infty }e^{-\xi t} \left[ b^{\beta} \mathcal{D}^\beta_t\, u(x,t)+\lambda^\gamma (I-K)^\gamma u(x,t) \right] dt = b^\beta \xi^{\beta} \widetilde{u}(x, \xi) - b^\beta \xi^{\beta -1} \delta(x) + \lambda^\gamma (1- e^{-\xi})^\gamma \widetilde{u}(x, \xi)
\end{equation*}
and therefore, equation \eqref{eq-galpha} takes the form
\begin{equation*}
b^\beta \xi^{\beta} \widetilde{u}(x, \xi) - b^\beta \xi^{\beta -1} \delta(x) + \lambda^\gamma (1- e^{-\xi})^\gamma \widetilde{u}(x, \xi) = - \frac{\partial \widetilde{u}}{\partial x}(x, \xi).
\end{equation*}
Furthermore,
\begin{equation*}
\int_0^\infty e^{-\mu x} \frac{\partial \widetilde{u}}{\partial x}(x, \xi) dx = \mu \widetilde{\widetilde{u}}(\mu, \xi) - \widetilde{u}(0, \xi)
\end{equation*}
where
\begin{equation*}
\widetilde{u}(0, \xi) = \int_0^\infty e^{-\xi t} u(0,t)dt = \xi^{-1} \lambda^\gamma \left(1 - e^{-\xi} \right)^\gamma.
\end{equation*}
Indeed, considering that
\begin{equation*}
-\frac{\gamma}{\Gamma(1-\gamma)}\sum_{k=0}^\infty \frac{\Gamma(k-\gamma)}{k!}  H(t-k) =  H(t) - \frac{\gamma}{\Gamma(1-\gamma)}\sum_{k=1}^\infty \frac{\Gamma(k-\gamma)}{k!}  H(t-k)
\end{equation*}
we can write
\begin{align*}
& \int_0^\infty e^{-\xi t} \left[ \lambda^\gamma H(t) -  \frac{\gamma \lambda^\gamma}{\Gamma(1-\gamma)} \sum_{k=1}^\infty  \frac{\Gamma(k-\gamma)}{k!}  H(t-k)  \right] dt\\
= & \frac{\lambda^\gamma}{\xi} - \frac{\gamma \lambda^\gamma}{\Gamma(1-\gamma)} \sum_{k=1}^\infty \frac{\Gamma(k-\gamma)}{k!}  \frac{e^{-\xi k}}{\xi}= \left[ \textrm{by \eqref{typical-bern-function}} \right]\\
= & \int_0^\infty \left( \frac{(1-e^{-s\lambda})}{\xi} \frac{1}{s^{\gamma+1}} - e^{-s\lambda} \sum_{k=1}^\infty \frac{s^{k-\gamma -1}}{k!} \frac{e^{-\xi k}}{\xi} \right) \frac{\gamma}{\Gamma(1-\gamma)} ds \\
= & \int_0^\infty \left( \frac{(1-e^{-s\lambda})}{\xi}  - e^{-s\lambda} \sum_{k=1}^\infty \frac{s^{k}}{k!} \frac{e^{-\xi k}}{\xi} \right) \frac{\gamma}{\Gamma(1-\gamma)}\frac{ds}{s^{\gamma+1}} \\
= & \int_0^\infty \left( \frac{1}{\xi}  - e^{-s\lambda} \sum_{k=0}^\infty \frac{s^{k}}{k!} \frac{e^{-\xi k}}{\xi} \right) \frac{\gamma}{\Gamma(1-\gamma)}\frac{ds}{s^{\gamma+1}} \\
= & \int_0^\infty \left( \frac{1 - e^{-s\lambda(1- e^{-\xi})} }{\xi} \right) \frac{\gamma}{\Gamma(1-\gamma)}\frac{ds}{s^{\gamma+1}}\\
= & \xi^{-1} \lambda^\gamma (1-e^{-\xi})^\gamma
\end{align*}
where we used once again formula \eqref{typical-bern-function}.

By collecting all pieces together, formula \eqref{eq-galpha} with initial and boundary conditions becomes
\begin{equation*}
b^\beta \xi^{\beta} \widetilde{\widetilde{u}}(\mu, \xi) - b^\beta \xi^{\beta -1} + \lambda^\gamma (1- e^{-\xi})^\gamma \widetilde{\widetilde{u}}(\mu, \xi) = - \mu \widetilde{\widetilde{u}}(\mu, \xi) + \xi^{-1} \lambda^\gamma \left(1 - e^{-\xi} \right)^\gamma.
\end{equation*}
Thus, we get that
\begin{equation}
\widetilde{\widetilde{u}}(\mu, \xi)  = \frac{b^\beta \xi^{\beta-1} + \xi^{-1}\lambda^\gamma (1-e^{-\xi})^\gamma }{\mu + b^\beta \xi^\beta + \lambda^\gamma (1- e^{-\xi})^\gamma}. \label{doub-lap-us}
\end{equation}
By observing that $\widetilde{\widetilde{u}} = \widetilde{\widetilde{h}}$, we get the claimed result.
\end{proof}


\section{L\'{e}vy processes with drifted Poisson time change}
\label{lastSect}

We consider now the L\'{e}vy process $X$ time-changed by an independent random time defined as in \eqref{N-time-changed-A}. This can be considered as a generalization of the result given in Lemma \ref{lemmaXN}.
\begin{te}
Let $X(t)$, $t\geq 0$, be the L\'{e}vy process previously introduced. Let $X_{j}$%
, $j=1,2,\ldots $ be i.i.d. random variables such that $X_{j}\sim X(1)$ for
all $j$. Then, for $\gamma ,\alpha \in (0,1]$, we have that
\begin{equation}
X(N(\mathfrak{A}_{t}^{\gamma })+a\mathfrak{A}_{t}^{\alpha })\overset{law}{=}%
\sum_{j=1}^{N(\mathfrak{A}_{t}^{\gamma })}X_{j}+X(a\mathfrak{A}_{t}^{\alpha
}),\quad t\geq 0,\;a\geq 0  \label{sub-X-and-pois}
\end{equation}%
where $\mathfrak{A}_{t}^{\gamma }$ and $\mathfrak{A}_{t}^{\alpha }$ are
independent stable subordinators. Furthermore, the infinitesimal generator
of \eqref{sub-X-and-pois} is written as
\begin{equation}
\mathcal{L}^{\alpha ,\gamma }f(x)=-(-a\mathcal{A})^{\alpha }f(x)-\lambda
^{\gamma }(I-K)^{\gamma }f(x)
\end{equation}%
where $K=e^{\mathcal{A}}$ is a shift operator and
\begin{equation}
-(-a\mathcal{A})^{\alpha }f(x)=\frac{\alpha a^{\alpha }}{\Gamma (1-\alpha )}%
\int_{0}^{\infty }\left( P_{s}f(x)-f(x)\right) \frac{ds}{s^{\alpha +1}}
\label{aA-symb}
\end{equation}%
with $P_{s}=e^{s\mathcal{A}}$, which is the semigroup of the L\'{e}vy
process $X(s)$, $s\geq 0$.
\end{te}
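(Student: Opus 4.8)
The plan is to prove the identity in law by a conditioning/independent–increments argument, and then to identify the infinitesimal generator through its Fourier multiplier (symbol), which will be read off from the characteristic function obtained in the first step.

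\emph{Identity in law.} I first observe that both sides of \eqref{sub-X-and-pois}, regarded as processes in $t$, are Lévy processes: the left–hand side is the Lévy process $X$ subordinated by the independent subordinator $t\mapsto N(\mathfrak{A}^\gamma_t)+a\mathfrak{A}^\alpha_t$ (a sum of two independent subordinators, namely the space–fractional Poisson process $N(\mathfrak{A}^\gamma_t)$ and the stable subordinator $a\mathfrak{A}^\alpha_t$), while the right–hand side is the sum of the two independent Lévy processes $t\mapsto\sum_{j=1}^{N(\mathfrak{A}^\gamma_t)}X_j$ (a Lévy process driven by the $\mathbb{N}_0$–valued subordinator $N(\mathfrak{A}^\gamma_t)$) and $t\mapsto X(a\mathfrak{A}^\alpha_t)$. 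Hence it suffices to match their one–dimensional characteristic functions, since equality of the time–$t$ marginals of two Lévy processes forces equality of all finite–dimensional distributions. Conditioning on $(\mathfrak{A}^\gamma_t,\mathfrak{A}^\alpha_t)$ and using $\mathbb{E}e^{i\xi X(s)}=e^{-s\Psi(\xi)}$, $\mathbb{E}e^{-zN(s)}=e^{-\lambda s(1-e^{-z})}$ and $\mathbb{E}e^{-\eta\mathfrak{A}^\rho_t}=e^{-t\eta^\rho}$ (all with the principal branch, legitimate since $\Re\Psi(\xi)\geq 0$), together with $N(\mathfrak{A}^\gamma_t)\perp \mathfrak{A}^\alpha_t$, one gets for the left–hand side
\begin{equation*}
\mathbb{E}\,e^{i\xi X(N(\mathfrak{A}^\gamma_t)+a\mathfrak{A}^\alpha_t)}=\mathbb{E}\,e^{-N(\mathfrak{A}^\gamma_t)\Psi(\xi)}\;\mathbb{E}\,e^{-a\mathfrak{A}^\alpha_t\Psi(\xi)}=\exp\Big(-t\big[a^\alpha\Psi(\xi)^\alpha+\lambda^\gamma(1-e^{-\Psi(\xi)})^\gamma\big]\Big),
\end{equation*}
and the same conditioning, together with $\mathbb{E}e^{i\xi X_j}=e^{-\Psi(\xi)}$ so that $\mathbb{E}e^{i\xi\sum_{j=1}^{n}X_j}=e^{-n\Psi(\xi)}$, yields the identical expression for the right–hand side. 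This proves \eqref{sub-X-and-pois}.

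\emph{Identification of the generator.} By the previous step the time–changed process $X(N(\mathfrak{A}^\gamma_t)+a\mathfrak{A}^\alpha_t)$ has Lévy symbol $\Phi^{\alpha,\gamma}(\xi)=a^\alpha\Psi(\xi)^\alpha+\lambda^\gamma(1-e^{-\Psi(\xi)})^\gamma$, so its generator $\mathcal{L}^{\alpha,\gamma}$ is the pseudo–differential operator with Fourier multiplier $-\Phi^{\alpha,\gamma}$. It then remains to check that $-(-a\mathcal{A})^\alpha-\lambda^\gamma(I-K)^\gamma$ has precisely this multiplier. Since $\mathcal{A}$ has multiplier $-\Psi$, the operator $-a\mathcal{A}$ has multiplier $a\Psi$ and its fractional power $(-a\mathcal{A})^\alpha=a^\alpha(-\mathcal{A})^\alpha$ has multiplier $a^\alpha\Psi(\xi)^\alpha$; writing the Bernstein function $z^\alpha$ via \eqref{typical-bern-function} and using $e^{s\mathcal{A}}=P_s$ (the scalar $a^\alpha$ being pulled out before the substitution) produces the representation \eqref{aA-symb}. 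For the jump part, $K=e^{\mathcal{A}}=P_1$ has multiplier $e^{-\Psi}$, hence $I-K$ has multiplier $1-e^{-\Psi}$, and the binomial series \eqref{I-K-power} gives $(I-K)^\gamma$ the multiplier $(1-e^{-\Psi(\xi)})^\gamma$, so $\lambda^\gamma(I-K)^\gamma$ has multiplier $\lambda^\gamma(1-e^{-\Psi(\xi)})^\gamma$. Adding the two multipliers reproduces $-\Phi^{\alpha,\gamma}(\xi)$, as required. Conceptually this is an instance of Phillips' subordination theorem: $t\mapsto N(\mathfrak{A}^\gamma_t)+a\mathfrak{A}^\alpha_t$ is a subordinator whose Laplace exponent is the Bernstein function $\phi(z)=a^\alpha z^\alpha+\lambda^\gamma(1-e^{-z})^\gamma$, and the generator of $X$ subordinated by it is $-\phi(-\mathcal{A})$, which unwinds to exactly $-(-a\mathcal{A})^\alpha-\lambda^\gamma(I-K)^\gamma$.

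The routine parts are the conditioning arguments of the first step and the symbol bookkeeping; note also that $(I-K)^\gamma$ is in fact a \emph{bounded} operator, because $K=P_1$ is a contraction and $\sum_{j\geq 0}|\binom{\gamma}{j}|<\infty$, so the series \eqref{I-K-power} converges unconditionally. The main technical obstacle is therefore the fractional power $(-a\mathcal{A})^\alpha$: one must verify the convergence of the subordination integral in \eqref{aA-symb} (the integrand is $O(s^{-\alpha})$ near $0$, since $P_sf-f\sim s\mathcal{A}f$ and $\alpha<1$, and $O(s^{-\alpha-1})$ near $\infty$, so it converges for $f\in D(\mathcal{A})$), together with the assertion that $-(-a\mathcal{A})^\alpha-\lambda^\gamma(I-K)^\gamma$, so defined, actually generates the Feller semigroup of the process in \eqref{sub-X-and-pois} on an appropriate core — which is precisely the content of Phillips' theorem applied to the Bernstein function $\phi$ above.
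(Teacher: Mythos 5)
Your proposal is correct and follows essentially the same route as the paper: compute the characteristic function of both sides by conditioning on the subordinators, read off the Lévy symbol $a^\alpha\Psi(\xi)^\alpha+\lambda^\gamma(1-e^{-\Psi(\xi)})^\gamma$, and identify the generator through the Bernstein-function representation of $(-a\mathcal{A})^\alpha$ and the binomial series for $(I-K)^\gamma$ with $K=P_1$. If anything, your treatment of the identity in law is more complete than the paper's, which only writes out the characteristic functions explicitly for $\alpha=\gamma=1$ and then appeals somewhat loosely to uniqueness of Laplace transforms, whereas you carry out the conditioning for general $\alpha,\gamma$ and justify passing from one-dimensional marginals to finite-dimensional distributions via the Lévy property of both sides.
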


\begin{proof}
We first recall that $\mathfrak{A}^1_t = t$ is the elementary subordinator. Thus, for $\gamma=\alpha=1$, the characteristic function of the right-hand side of \eqref{sub-X-and-pois} is given by
\begin{align*}
\mathbb{E} \exp \left(i \xi \sum_{j=1}^{N(t)} X_j + i \xi  X(at)  \right) = & e^{-at \Psi(\xi)} \mathbb{E}\exp \left( i\xi X(1)N(t) \right)\\
= & e^{-at \Psi(\xi)} \mathbb{E}\left( \mathbb{E} e^{i\xi X(1)} \right)^{N(t)} =  e^{-at \Psi(\xi)} \mathbb{E}\left( e^{-\Psi(\xi)} \right)^{N(t)}\\
= & e^{-at \Psi(\xi)} \exp \left(- \lambda t \left(1- e^{-\Psi(\xi)}\right) \right) =  \exp\left( -t \Phi(\xi) \right).
\end{align*}
which coincides with \eqref{Phi-symbol}. From the uniqueness of the Laplace transform
\begin{equation*}
\mathbb{E} \exp\left( - g(\Phi(\xi)) \mathfrak{A} \right)
\end{equation*}
(for some well-behaved $g$) we obtain the equality in distribution \eqref{sub-X-and-pois}. Indeed, 
\begin{align*}
- \partial_t \, \mathbb{E} \exp \left(-a \Psi(\xi) \mathfrak{A}^\alpha_t - \lambda \left(1- e^{-\Psi(\xi)}\right) \mathfrak{A}^\gamma_t \right) \Big|_{t=0} = \left( a \Psi(\xi) \right)^\alpha + \lambda^\gamma \left( 1 - e^{-\Psi(\xi)}\right)^\gamma
\end{align*}
is the Fourier symbol of the process which appears in the right-hand side of \eqref{sub-X-and-pois}. Let us write the Fourier symbol as
\begin{equation}
\label{symb-gen-space-diff}
g_{\alpha, \gamma}(\xi) = \left( a \Psi(\xi) \right)^\alpha + \lambda^\gamma \left( 1 - e^{-\Psi(\xi)}\right)^\gamma.
\end{equation}
We now show that $-g_{\alpha, \gamma}(\xi)$ is  the Fourier multiplier of the infinitesimal generator of the left-hand side of \eqref{sub-X-and-pois}. The Fourier transform of \eqref{aA-symb} is given by
\begin{align*}
 \frac{ \alpha a^\alpha }{\Gamma(1-\alpha)} \int_{0}^\infty \left( e^{-s\Psi(\xi)} - 1 \right) \frac{ds}{s^{\alpha +1}} \widehat{f}(\xi) = \left( a \Psi(\xi) \right)^\alpha \, \widehat{f}(\xi),
\end{align*}
where we recall that $e^{-s\Psi(\xi)}$ is the symbol of the semigroup $P_s$ associated to the infinitesimal generator $\mathcal{A}$ and
\begin{equation}
M(ds)= \frac{\alpha}{\Gamma(1-\alpha)}\frac{ds}{s^{\alpha+1}}
\end{equation}
is the L\'{e}vy measure of a stable subordinator of order $\alpha \in (0,1)$. As we have shown before, we also have that
\begin{equation*}
\int_\mathbb{R} e^{i\xi x} \lambda^\gamma (I-K)^\gamma f(x) dx = \lambda^\gamma (1 - e^{-\xi})^\gamma \widehat{f}(\xi)
\end{equation*}
iff $K=e^{-\partial_x}$ is the translation operator.

Now we show that
\begin{equation}
\widehat{(I-e^\mathcal{A})^\gamma f} = (1-e^{-\Psi(\xi)})^\gamma \widehat{f}.\label{symb-gamma-pow}
\end{equation}
By \eqref{I-K-power}, we get the Fourier transform
\begin{equation*}
\widehat{(I-K)^\gamma f}= \sum_{j=0}^\infty (-1)^j \binom{\gamma}{j} \widehat{K^j\, f}.
\end{equation*}
For $K=e^\mathcal{A}=P_1$, where $P_t$ is the semigroup with symbol $e^{-t \Psi}$, we obtain  \eqref{symb-gamma-pow}.
\end{proof}

\begin{os}
We observe that $K=e^\mathcal{A}$ is a translation operator. Furthermore, it
represents a Frobenious-Perron operator associated with the transformation $%
 x\mapsto x-f(x)$ where $f(x)$ is a random jump with generator $\mathcal{A}
$. If $\mathcal{A}= -\partial_x$, then the jump equals $f=1$.
\end{os}

Moreover we notice that, for $\alpha=\gamma=1$ and $a\geq 0$, we have that
\begin{equation*}
X(N(t) + at) \overset{law}{=} \sum_{j=1}^{N(t)} X_j + X(at)
\end{equation*}
where $X_j \sim X(1)$ are independent for all $j$ and the process $X(t)$, $%
t\geq 0$ is governed by the equation
\begin{equation*}
\frac{\partial u}{\partial t} = \mathcal{A}u.
\end{equation*}

As a different time argument we consider now the hitting time defined in \eqref{hitting-mathfrak-h} and thus we apply the result of Theorem \ref{theo-H-bound-cond} in order to define a new time-changed L\'{e}vy process.

\begin{te}
\label{theo-quasi-last}
Let $X(t)$, $t>0$ be a L\'{e}vy process with symbol \eqref{Levy-symb-X} independent from $\mathfrak{H}_t$, $t>0$. The governing equation of
\begin{equation}
X(\mathfrak{H}_t), \quad t>0
\end{equation}
is given by
\begin{equation}
\label{gov-eq-last1}
b^\beta \mathcal{D}^\beta_t\, u(x,t) + \lambda^{\gamma} (I-K_1)^{\gamma} u(x,t) = \mathcal{A} u(x,t), \quad x \in \mathbb{R}, \; t>0
\end{equation}
subject to the initial and boundary conditions \eqref{boundary-H}.
\end{te}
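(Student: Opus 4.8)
The plan is to pin down the law of $X(\mathfrak{H}_t)$ through its joint transform --- Fourier in the spatial variable $x\in\mathbb{R}$, Laplace in $t$ --- and to match it against the transform of the solution of \eqref{gov-eq-last1}, \eqref{boundary-H}. Write $q(x,t)=\Pr\{X(\mathfrak{H}_t)\in dx\}/dx$. Since $X$ is independent of $\mathfrak{H}$ and $\mathbb{E}\,e^{i\theta X(s)}=e^{-s\Psi(\theta)}$ with $\Re\Psi(\theta)\ge 0$, conditioning on $\mathfrak{H}_t$ gives
\[
\widehat q(\theta,t)=\mathbb{E}\,e^{i\theta X(\mathfrak{H}_t)}=\mathbb{E}\big[\mathbb{E}[e^{i\theta X(\mathfrak{H}_t)}\mid\mathfrak{H}_t]\big]=\mathbb{E}\,e^{-\Psi(\theta)\mathfrak{H}_t}.
\]

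First I would use Theorem \ref{theo-H-bound-cond}: its proof yields the double Laplace transform $\widetilde{\widetilde h}(\mu,\xi)=\int_0^\infty e^{-\xi t}\,\mathbb{E}\,e^{-\mu\mathfrak{H}_t}\,dt$ of the density $h(x,t)=\Pr\{\mathfrak{H}_t\in dx\}/dx$, valid for $\Re\mu\ge 0$. Evaluating it at $\mu=\Psi(\theta)$ (admissible because $\Re\Psi(\theta)\ge 0$, and extended by analyticity otherwise) gives the Fourier--Laplace transform of $q$:
\[
\int_0^\infty e^{-\xi t}\,\widehat q(\theta,t)\,dt=\frac{\xi^{-1}\big[b^\beta\xi^\beta+\lambda^\gamma(1-e^{-\xi})^\gamma\big]}{\Psi(\theta)+b^\beta\xi^\beta+\lambda^\gamma(1-e^{-\xi})^\gamma}.
\]

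Next I would transform equation \eqref{gov-eq-last1} together with \eqref{boundary-H}. The Fourier transform in $x$ turns $\mathcal{A}$ into multiplication by $-\Psi(\theta)$; the Laplace transform in $t$ turns $b^\beta\mathcal{D}^\beta_t$ into $b^\beta\xi^\beta\,\widehat{\widetilde u}-b^\beta\xi^{\beta-1}\widehat{u_0}$, with $\widehat{u_0}(\theta)=\widehat\delta(\theta)=1$ from $u(x,0)=\delta(x)$, and turns $\lambda^\gamma(I-K_1)^\gamma$ into multiplication by $\lambda^\gamma(1-e^{-\xi})^\gamma$, where $K_1=e^{-\partial_t}$ and $(I-K_1)^\gamma=\sum_{j\ge 0}(-1)^j\binom{\gamma}{j}K_1^j$ --- the same computation already performed for $(I-K)^\gamma$ in the proofs of Theorems \ref{theorem-semig-N} and \ref{theo-H-bound-cond}. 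The datum at $x=0$ in \eqref{boundary-H} enters as the inhomogeneous term $\delta(x)\,u(0,t)$, whose transform $\widetilde u(0,\xi)=\xi^{-1}\lambda^\gamma(1-e^{-\xi})^\gamma$ is precisely the integral already evaluated in the proof of Theorem \ref{theo-H-bound-cond}. Collecting these, the transformed equation reads
\[
\big[\Psi(\theta)+b^\beta\xi^\beta+\lambda^\gamma(1-e^{-\xi})^\gamma\big]\,\widehat{\widetilde u}(\theta,\xi)=b^\beta\xi^{\beta-1}+\xi^{-1}\lambda^\gamma(1-e^{-\xi})^\gamma=\xi^{-1}\big[b^\beta\xi^\beta+\lambda^\gamma(1-e^{-\xi})^\gamma\big],
\]
so $\widehat{\widetilde u}$ coincides with the transform of $q$ obtained above, and uniqueness of the Fourier--Laplace transform gives $u=q$.

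The step needing most care is the bookkeeping of the boundary condition \eqref{boundary-H}. In Theorem \ref{theo-H-bound-cond} that datum entered through the first-order transport operator $-\partial_x$, whereas here the nonlocal generator $\mathcal{A}$ is diagonalised by the Fourier transform, so the condition at $x=0$ must be read as the source $\delta(x)\,u(0,t)$ contributing exactly $\xi^{-1}\lambda^\gamma(1-e^{-\xi})^\gamma$ to the transformed equation; since this is the very integral already computed in the proof of Theorem \ref{theo-H-bound-cond}, no new estimate is required. As an independent confirmation I would use the subordination representation $q(x,t)=\int_0^\infty p_X(x;s)\,h(s,t)\,ds$, with $p_X(\cdot;s)$ the transition density of $X(s)$ so that $\partial_s p_X=\mathcal{A}p_X$ and $p_X(\cdot;0^+)=\delta$; applying $\mathcal{A}$, integrating by parts in $s$ (the boundary term at $s=0$ producing $\delta(x)\,h(0^+,t)$), and inserting $-\partial_s h=b^\beta\mathcal{D}^\beta_t h+\lambda^\gamma(I-K_1)^\gamma h$ from Theorem \ref{theo-H-bound-cond} recovers \eqref{gov-eq-last1} with the boundary term identified as $\delta(x)\,u(0,t)$, $u(0,t)$ as in \eqref{boundary-H}.
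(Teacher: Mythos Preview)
Your proof is correct and follows essentially the same route as the paper: compute the Fourier--Laplace transform of the law of $X(\mathfrak{H}_t)$ by conditioning on $\mathfrak{H}_t$ and invoking the double transform \eqref{doub-lap-us} (with $\mu$ replaced by $\Psi(\theta)$), then match it against the transform of the solution of \eqref{gov-eq-last1}, \eqref{boundary-H}. The paper does precisely this, writing the Fourier-transformed equation \eqref{gov-eq-last1-fourier} and then appealing to the computation already carried out in the proof of Theorem~\ref{theo-H-bound-cond} to arrive at \eqref{doub-lap-four-us}; you are simply more explicit about how the datum $u(0,t)$ has to be read (as the source $\delta(x)\,u(0,t)$) once $-\partial_x$ is replaced by the nonlocal $\mathcal{A}$, a point the paper absorbs into the phrase ``following the same arguments''. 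Your closing subordination argument, integrating $p_X(x;s)h(s,t)$ by parts in $s$ and feeding in the equation for $h$ from Theorem~\ref{theo-H-bound-cond}, is an additional independent check not present in the paper.
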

\begin{proof}
Let us consider the double Laplace transform \eqref{doub-lap-us}. Since $-\Psi$ is the Fourier multiplier of $\mathcal{A}$, the Fourier transform of \eqref{gov-eq-last1} is written as
\begin{equation}
\label{gov-eq-last1-fourier}
b^\beta \mathcal{D}^\beta_t\, \widehat{u}(\mu, t) + \lambda^{\gamma} (I-K_1)^{\gamma} \widehat{u}(\mu, t) = - \Psi(\mu) \widehat{u}(\mu, t).
\end{equation}
By passing to the Laplace transform of \eqref{gov-eq-last1-fourier} and following the same arguments as in the proof of Theorem \ref{theo-H-bound-cond}, we get that formula \eqref{doub-lap-us} leads to
\begin{equation}
\widetilde{\widehat{u}}(\mu, \xi)  = \frac{b^\beta \xi^{\beta-1} + \xi^{-1}\lambda^\gamma (1-e^{-\xi})^\gamma }{\Psi(\mu) + b^\beta \xi^\beta + \lambda^\gamma (1- e^{-\xi})^\gamma} \label{doub-lap-four-us}
\end{equation}
which is the Laplace-Fourier transform of the solution to \eqref{gov-eq-last1} subject to the conditions \eqref{boundary-H}. With the Laplace transform \eqref{h-tilde} at hand, we also get that
\begin{align}
\label{lap-transf-H-last}
\int_0^\infty e^{-\xi t} \mathbb{E}e^{i\mu X(\mathfrak{H}_t)} dt = & \int_0^\infty e^{-\xi t} \mathbb{E}e^{- \mathfrak{H}_t\, \Psi(\mu) } dt =  \int_0^\infty e^{- x \Psi(\mu)} \widetilde{h}(x, \xi) dx
\end{align}
equals \eqref{doub-lap-four-us} and therefore we obtain the claimed result.
\end{proof}

Finally, as a further generalization, we write
\begin{equation}
\label{last-proc1}
\mathfrak{E}^{(j)}_t = N_j(\mathfrak{A}^{\gamma_j}_t) + b_j\, \mathfrak{A}^{\theta_j}_t, \quad t>0, \qquad j=1,2
\end{equation}
with $\gamma_j, \theta_j \in (0,1]$ for all $j$ and
\begin{equation}
\label{last-proc2}
\mathfrak{H}^{(j)}_t = \inf\{s\geq 0\, :\, \mathfrak{E}^{(j)}_s \notin (0,t))\}, \quad t>0, \qquad j=1,2
\end{equation}
where $b_j\geq 0$, $N_j(t)$, $t>0$ is a Poisson process with rate $\lambda_j>0$, $j=1,2$, whereas, we still denote by $\mathfrak{H}_t$ the hitting time \eqref{hitting-mathfrak-h}. All the processes are independent from each other. Moreover,  we consider here the processes \eqref{last-proc1} and \eqref{last-proc2} with $\theta_1=\beta$ and $\theta_2=\alpha$ to streamline the notation.

\begin{te}
Let $X(t)$, $t>0$ be a L\'{e}vy process with symbol \eqref{Levy-symb-X} independent from $\mathfrak{E}^{(1)}_t$, $t>0$ and $\mathfrak{H}^{(2)}_t$, $t>0$. The governing equation of the process
\begin{equation}
X(\mathfrak{E}^{(2)}_{\mathfrak{H}^{(1)}_t}), \quad t>0
\end{equation}
is written as
\begin{equation}
\label{eq-gov-last-comp}
\left( b_1^\beta \mathcal{D}^\beta_t\,  + \lambda_1^{\gamma_1} (I-K_1)^{\gamma_1}  + (-b_2 \mathcal{A})^\alpha + \lambda_2^{\gamma_2} (I-K_2)^{\gamma_2}\right) u(x,t) = 0, \quad x\in \mathbb{R}, \; t>0
\end{equation}
where $K_1=e^{-\partial_t}$ and $K_2=e^{\mathcal{A}}$, subject to the initial and boundary conditions \eqref{boundary-H}.
\end{te}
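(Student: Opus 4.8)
The plan is to recognise $X(\mathfrak{E}^{(2)}_{\mathfrak{H}^{(1)}_t})$ as a L\'evy process run along an independent hitting-time clock, and then to reduce the statement to Theorem \ref{theo-quasi-last}. To this end I would first examine $\widetilde{X}_s := X(\mathfrak{E}^{(2)}_s)$, $s\geq 0$. Since $\mathfrak{E}^{(2)}_s = N_2(\mathfrak{A}^{\gamma_2}_s) + b_2\mathfrak{A}^{\alpha}_s$ is an independent sum of a Poissonian subordinator subordinated by a stable subordinator and of a stable subordinator, it is itself a subordinator, and therefore $\widetilde{X}$ is again a L\'evy process. Its symbol is $\Phi_2(\mu) = \phi_2(\Psi(\mu))$, where $\phi_2$ is the Laplace exponent of $\mathfrak{E}^{(2)}$; by \eqref{lap-gen-theorem} specialised to $\beta=1$ one gets $\phi_2(\eta) = (b_2\eta)^\alpha + \lambda_2^{\gamma_2}(1-e^{-\eta})^{\gamma_2}$, hence $\Phi_2(\mu) = (b_2\Psi(\mu))^\alpha + \lambda_2^{\gamma_2}(1-e^{-\Psi(\mu)})^{\gamma_2}$ (the fractional powers being unambiguous because $\Re\Psi(\mu)\geq 0$). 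Comparing with the Theorem of this section on $X(N(\mathfrak{A}^\gamma_t)+a\mathfrak{A}^\alpha_t)$, read with $a,\gamma,N$ replaced by $b_2,\gamma_2,N_2$, this $\Phi_2$ is exactly the symbol $g_{\alpha,\gamma_2}$ of \eqref{symb-gen-space-diff}, so that $-\Phi_2$ is the Fourier multiplier of $\mathcal{L}^{\alpha,\gamma_2} = -(-b_2\mathcal{A})^\alpha - \lambda_2^{\gamma_2}(I-K_2)^{\gamma_2}$ with $K_2=e^{\mathcal{A}}$ (cf.\ \eqref{aA-symb} and \eqref{symb-gamma-pow}); in other words, the infinitesimal generator of $\widetilde{X}$ is $\mathcal{L}^{\alpha,\gamma_2}$.

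Next I would note that $\mathfrak{H}^{(1)}_t$ is precisely the hitting time of Theorem \ref{theo-H-bound-cond} built from $\mathfrak{E}^{(1)}_s = N_1(\mathfrak{A}^{\gamma_1}_s) + b_1\mathfrak{A}^\beta_s$, i.e.\ with $(b,\beta,\gamma,\lambda)$ there replaced by $(b_1,\beta,\gamma_1,\lambda_1)$ and $K=K_1=e^{-\partial_t}$; hence by \eqref{h-tilde} its Laplace transform in $t$ is $\int_0^\infty e^{-\xi t}\Pr\{\mathfrak{H}^{(1)}_t\in dx\}\,dt = \xi^{-1}\Phi_1(\xi)\,e^{-x\Phi_1(\xi)}\,dx$ with $\Phi_1(\xi) = b_1^\beta\xi^\beta + \lambda_1^{\gamma_1}(1-e^{-\xi})^{\gamma_1}$. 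Since $X(\mathfrak{E}^{(2)}_{\mathfrak{H}^{(1)}_t}) = \widetilde{X}_{\mathfrak{H}^{(1)}_t}$ is the L\'evy process $\widetilde{X}$ time-changed by the independent hitting time $\mathfrak{H}^{(1)}$, Theorem \ref{theo-quasi-last} applies with $X$ replaced by $\widetilde{X}$ (equivalently $\mathcal{A}$ replaced by $\mathcal{L}^{\alpha,\gamma_2}$) and the time-side data replaced by $b_1,\beta,\gamma_1,\lambda_1$ — its proof uses only that $-\Psi$ (here $-\Phi_2$) is the Fourier multiplier of the spatial generator and the Laplace identity \eqref{h-tilde}, both of which carry over. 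This gives the equation $b_1^\beta\mathcal{D}^\beta_t u + \lambda_1^{\gamma_1}(I-K_1)^{\gamma_1}u = \mathcal{L}^{\alpha,\gamma_2}u$ subject to the initial and boundary data \eqref{boundary-H}, and substituting $\mathcal{L}^{\alpha,\gamma_2} = -(-b_2\mathcal{A})^\alpha - \lambda_2^{\gamma_2}(I-K_2)^{\gamma_2}$ and moving all terms to the left yields \eqref{eq-gov-last-comp}.

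For a self-contained verification one can instead argue directly: conditioning on $\mathfrak{H}^{(1)}_t$ and using independence of $X$, $\mathbb{E}e^{i\mu X(\mathfrak{E}^{(2)}_{\mathfrak{H}^{(1)}_t})} = \mathbb{E}e^{-\Phi_2(\mu)\mathfrak{H}^{(1)}_t}$, and integrating this against $e^{-\xi t}$ and using the Laplace transform of $\mathfrak{H}^{(1)}_t$ produces the double Laplace--Fourier transform $\xi^{-1}\Phi_1(\xi)/(\Phi_1(\xi)+\Phi_2(\mu))$. This is exactly what one obtains by Fourier-transforming \eqref{eq-gov-last-comp} in $x$ — so that $(-b_2\mathcal{A})^\alpha + \lambda_2^{\gamma_2}(I-K_2)^{\gamma_2}$ becomes multiplication by $\Phi_2(\mu)$ — and then Laplace-transforming in $t$ with $u(x,0)=\delta(x)$ and the boundary datum \eqref{boundary-H}, following verbatim the algebra in the proof of Theorem \ref{theo-H-bound-cond} that leads to \eqref{doub-lap-four-us}; uniqueness of the double Laplace transform then identifies the density of $X(\mathfrak{E}^{(2)}_{\mathfrak{H}^{(1)}_t})$ with the solution of \eqref{eq-gov-last-comp}.

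I expect the main obstacle to be the bookkeeping of this reduction rather than any genuinely new estimate: one must check that $\widetilde{X} = X\circ\mathfrak{E}^{(2)}$ really is a L\'evy process carrying a bona fide L\'evy symbol (so that Theorem \ref{theo-quasi-last}, stated for symbols of the form \eqref{Levy-symb-X}, may legitimately be applied with the composite symbol $\Phi_2$), that the generator of $\widetilde{X}$ is correctly identified as the pseudo-differential operator $\mathcal{L}^{\alpha,\gamma_2}$ on its natural domain, and that the boundary condition \eqref{boundary-H} — which only involves the ``$t$-side'' data $b_1,\beta,\gamma_1,\lambda_1$ — is inherited unchanged through the composition. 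Once these points are settled, the argument is simply a concatenation of the Laplace/Fourier identities already established in Theorems \ref{theo-H-bound-cond}, \ref{theo-quasi-last} and in the Theorem establishing \eqref{sub-X-and-pois}.
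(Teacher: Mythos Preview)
Your proposal is correct and coincides with the paper's argument. The paper proceeds exactly along your ``self-contained verification'': it identifies the Fourier multiplier of the spatial part as $g_{\alpha,\gamma_2}(\mu)=(b_2\Psi(\mu))^\alpha+\lambda_2^{\gamma_2}(1-e^{-\Psi(\mu)})^{\gamma_2}$, derives the Laplace--Fourier transform of the PDE solution as $\xi^{-1}\Phi_1(\xi)/(\Phi_1(\xi)+g_{\alpha,\gamma_2}(\mu))$, computes $\mathbb{E}e^{i\mu X(\mathfrak{E}^{(2)}_{\mathfrak{H}^{(1)}_t})}$ by successive conditioning, and matches the two via \eqref{h-tilde} and the arguments of Theorem~\ref{theo-quasi-last}. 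Your Route~1 (recognising $\widetilde X=X\circ\mathfrak{E}^{(2)}$ as a L\'evy process and invoking Theorem~\ref{theo-quasi-last} directly) is a clean conceptual repackaging of the same computation that the paper leaves implicit.
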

\begin{proof}
We start once again from \eqref{doub-lap-us}. By considering the Fourier transform of \eqref{eq-gov-last-comp}, from the previous results and by formula \eqref{symb-gen-space-diff}, we get that
\begin{align*}
g_{\alpha, \gamma_2}(\mu) = & \int_\mathbb{R} e^{i\mu x} \bigg[-(-b_2 \mathcal{A})^\alpha u(x,t) - \lambda_2^{\gamma_2} (I-K_2)^{\gamma_2} u(x,t) \bigg]dx\\
= & b_2^\alpha \left( \Psi(\mu)\right)^\alpha + \lambda_2^{\gamma_2} \left( 1- e^{-\Psi(\mu)} \right)^{\gamma_2}
\end{align*}
and (see the proof of the previous theorem)
\begin{equation*}
\widetilde{\widehat{u}}(\mu, \xi)  = \frac{b_1^\beta \xi^{\beta-1} + \xi^{-1}\lambda_1^{\gamma_1} (1-e^{-\xi})^{\gamma_1} }{g_{\alpha, \gamma_2}(\mu) + b_1^\beta \xi^\beta + \lambda_1^{\gamma_1} (1- e^{-\xi})^{\gamma_1}}
\end{equation*}
which is the Laplace-Fourier transform of the solution to \eqref{eq-gov-last-comp} subject to the conditions \eqref{boundary-H}. Now, it remains to see that
\begin{align*}
\mathbb{E} \exp\left( i \mu X(\mathfrak{E}^{(2)}_{\mathfrak{H}^{(1)}_t}) \right) = & \mathbb{E} \exp\left( - (\mathfrak{E}^{(2)}_{\mathfrak{H}^{(1)}_t})\, \Psi(\mu) \right) \\
= & [\textrm{see formula \eqref{lap-gen-theorem} with } a=b_2, \, \lambda=\lambda_2,\, \gamma=\gamma_2,\, \beta=1] \\
= & \mathbb{E} \exp\left( - (\mathfrak{H}^{(1)}_t) \left( b_2^\alpha (\Psi(\mu))^\alpha + \lambda_2 (1- e^{-\Psi(\mu)})^{\gamma_2} \right)  \right)\\
= & \int_0^\infty \exp \left(- x g_{\alpha, \gamma_2}(\mu) \right)\, Pr\{ \mathfrak{H}^{(1)}_t \in dx \} \\
= & \int_0^\infty \exp \left(- x g_{\alpha, \gamma_2}(\mu) \right)\, h(x,t)dx.
\end{align*}
By considering the Laplace transform $\widetilde{h}(x, \xi) = \int_0^\infty e^{-\xi t} h(x,t)dt$ we get that
\begin{equation*}
\int_0^\infty e^{-\xi t} \mathbb{E} \exp\left( i \mu X(\mathfrak{E}^{(2)}_{\mathfrak{H}^{(1)}_t}) \right) \, dt = \int_0^\infty \exp \left(- x g_{\alpha, \gamma_2}(\mu) \right) \, \widetilde{h}(x, \xi) dx
\end{equation*}
as in the Laplace transform \eqref{lap-transf-H-last}. Therefore, by the same arguments as in the proof of Theorem \ref{theo-quasi-last} we conclude the proof.
\end{proof}

\end{document}